\documentclass[11pt]{amsart}

\usepackage{fullpage,amsmath,amsthm,amsfonts,amssymb,
graphicx,amscd,float,hyperref,enumitem,setspace,amsaddr}
\usepackage{comment}
\usepackage[T1]{fontenc}
\usepackage[utf8]{inputenc}
\usepackage{mathtools}
\usepackage{aliascnt}
\usepackage{tikz}
\usetikzlibrary{arrows.meta,patterns}

\hypersetup{colorlinks=true,linkcolor=blue,citecolor=blue,urlcolor=blue}

\newtheorem{thm}{Theorem}[section]

\newaliascnt{lem}{thm}
\newtheorem{lem}[lem]{Lemma}
\aliascntresetthe{lem}

\newaliascnt{qst}{thm}

\aliascntresetthe{qst}

\newaliascnt{cor}{thm}
\newtheorem{cor}[cor]{Corollary}
\aliascntresetthe{cor}

\newaliascnt{prop}{thm}
\newtheorem{prop}[prop]{Proposition}
\aliascntresetthe{prop}

\newaliascnt{ex}{thm}

\aliascntresetthe{ex}

\newaliascnt{nt}{thm}

\aliascntresetthe{nt}

\newaliascnt{cl}{thm}

\aliascntresetthe{cl}

\theoremstyle{definition}
\newaliascnt{rem}{thm}
\newtheorem{rem}[rem]{Remark}
\aliascntresetthe{rem}

\newaliascnt{conv}{thm}
\newtheorem{conv}[conv]{Convention}
\aliascntresetthe{conv}

\newaliascnt{defn}{thm}
\newtheorem{defn}[defn]{Definition}
\aliascntresetthe{defn}

\newaliascnt{pd}{thm}

\aliascntresetthe{pd}

\newaliascnt{prob}{thm}

\aliascntresetthe{prob}

\usepackage[nameinlink,capitalize]{cleveref}
\crefname{thm}{Theorem}{Theorems}
\Crefname{thm}{Theorem}{Theorems}
\crefname{prop}{Proposition}{Propositions}
\Crefname{prop}{Proposition}{Propositions}
\crefname{lem}{Lemma}{Lemmas}
\Crefname{lem}{Lemma}{Lemmas}
\crefname{cor}{Corollary}{Corollaries}
\Crefname{cor}{Corollary}{Corollaries}
\crefname{rem}{Remark}{Remarks}
\Crefname{rem}{Remark}{Remarks}
\crefname{ex}{Example}{Examples}
\Crefname{ex}{Example}{Examples}
\crefname{defn}{Definition}{Definitions}
\Crefname{defn}{Definition}{Definitions}
\crefname{prob}{Problem}{Problems}
\Crefname{prob}{Problem}{Problems}
\crefname{qst}{Question}{Questions}
\Crefname{qst}{Question}{Questions}
\crefname{cl}{Claim}{Claims}
\Crefname{cl}{Claim}{Claims}
\crefname{nt}{Notation}{Notations}
\Crefname{nt}{Notation}{Notations}
\crefname{conv}{Convention}{Conventions}
\Crefname{conv}{Convention}{Conventions}
\crefname{pd}{Proposition-Definition}{Proposition-Definitions}
\Crefname{pd}{Proposition-Definition}{Proposition-Definitions}

\newcommand{\F}{F(a,b)}

\newcommand{\Z}{\mathbb Z}
\newcommand{\N}{\mathbb N}

\renewcommand{\phi}{\varphi}

\newcommand{\R}{\mathbb R}
\newcommand{\Pcal}{\mathcal P}
\newcommand{\Rcal}{\mathcal R}
\newcommand{\Area}{\operatorname{Area}}

\newcommand{\ncl}{\operatorname{ncl}}

\newcommand{\eps}{\varepsilon}
\newcommand{\Nzero}{\N\cup\{0\}}
\newcommand{\Qcal}{\mathcal Q}

\newcommand{\Acal}{\mathcal A}

\newcommand{\fpreceq}{\preceq_{\mathrm f}}
\newcommand{\fsimeq}{\simeq_{\mathrm f}}

\makeatletter
\@namedef{subjclassname@2020}{%
  \textup{2020} Mathematics Subject Classification}
\makeatother

\title[Dehn functions for infinite presentations]{On Dehn functions for infinite group presentations}

\author{Ilya Kapovich}

\address{Department of Mathematics and Statistics, Hunter College of CUNY\newline
  \indent 695 Park Ave, New York, NY 10065, U.S.A.
  \newline \indent e-mail {\tt ik535@hunter.cuny.edu}
  \newline\indent  ORCID 0000-0002-7694-6236
  }

\subjclass[2020]{Primary 20F65; Secondary 20F05, 20F06}
\keywords{Dehn function, infinite presentation, isoperimetric gap, relative presentation, free abelian group, small cancellation group, surface group, van Kampen diagram}
\date{}

\begin{document}

\begin{abstract}
We study Dehn functions of presentations with finite generating sets and possibly infinite defining relator sets, focusing first on finitely presentable groups.  Unlike finite presentations, such presentations can exhibit new filling behavior even for groups as classical as $\Z^2$ and torsion-free small-cancellation groups, including closed surface groups.

For $\Z^2$ on the standard generators $a,b$, we prove that the presentation
\[
  \left\langle a,b\ \middle|\ [a^{2^k},b],\ k=0,1,2,\ldots\right\rangle
\]
has Dehn function of order $n\log n$.  

We also construct infinite presentations of $\Z^2$ on the same generators with logarithmic Dehn function in a fine asymptotic sense, and with polynomial-envelope behavior below the quadratic scale: for every $0<\alpha<2$, there is such a presentation whose Dehn function satisfies a global upper bound $O(n^\alpha)$ and has matching $n^\alpha$-order lower-bound peaks along an infinite sequence of lengths.  These examples give continuum many distinct fine filling-growth types for presentations of $\Z^2$ on the fixed generating set $\{a,b\}$.  For finite $C'(1/6)$ small-cancellation presentations with no proper-power relators, we obtain analogous logarithmic and polynomial-envelope examples with exponents $0<\alpha<1$; the standard presentations of closed orientable surface groups of genus at least two are included in this class.

\end{abstract}

\maketitle

\tableofcontents

\section{Introduction}\label{sec:introduction}

Dehn functions, or isoperimetric functions of finite group presentations, are
central in geometric group theory.  They measure the filling area needed to
prove that a word in the generators represents the identity, equivalently the
minimum number of faces in a van Kampen diagram with a prescribed boundary
label.  They connect presentations, large-scale geometry, filling inequalities,
and algorithmic properties of groups.  This viewpoint goes back to classical
combinatorial group theory and Dehn's solution of the word problem for surface
groups; standard references include Lyndon--Schupp and Bridson's survey
\cite{LyndonSchupp,BridsonWordProblem}.

For finite presentations, Dehn functions have robust geometric meaning.  Their
growth type, up to the usual equivalence relation, is a quasi-isometry invariant;
see Alonso and Gersten, and the treatments in Epstein et al.\ and
Bridson--Haefliger
\cite{AlonsoQI,GerstenIso,EpsteinWordProcessing,BridsonHaefliger}.  Gromov
proved that a finitely presented group is word-hyperbolic exactly when it
satisfies a linear isoperimetric inequality \cite{GromovHyperbolic}.  Gersten
proved the related algorithmic characterization: for a finitely presented group,
solvability of the word problem is equivalent to the existence, for some,
equivalently every, finite presentation, of a recursive Dehn function
\cite{GerstenIso}.

The range of possible growth types for Dehn functions of finitely presented
groups has been studied extensively.  Work of Birget--Ol'shanskii--Rips--Sapir,
Sapir--Birget--Rips, and Ol'shanskii--Sapir relates Dehn functions to
computational complexity and gives broad constructions realizing prescribed or
highly controlled growth types
\cite{BBR,SBR,OlshanskiiSapirLengthArea}.  Work of Noel Brady, Bridson, and
others produced finitely presented groups whose Dehn functions grow like powers
$n^\alpha$ for many non-integral exponents $\alpha$
\cite{BradyBridsonGap}.  At the low end, however, finite presentations exhibit a
rigid gap: a finitely presented group with subquadratic Dehn function is
word-hyperbolic, and therefore has linear Dehn function; see Gromov,
Ol'shanskii, Bowditch, and Brady--Bridson
\cite{GromovHyperbolic,OlshanskiiSubquadratic,BowditchSubquadratic,BradyBridsonGap}.
Thus no genuine growth strictly between $n$ and $n^2$ occurs for finite
presentations, up to the usual equivalence.
There is also a simple cardinality limitation: there are only countably many
finite group presentations, and hence only countably many growth types of Dehn
functions arising from finite presentations.  By contrast, for presentations
with a finite generating set but an infinite set of defining relators, the
number of possible growth types can have the cardinality of the continuum even
for a fixed group with a fixed generating set.  The results below demonstrate
this point explicitly: for the fixed group $\Z^2$ on the fixed generating set
$\{a,b\}$, \cref{cor:z2-polynomial-from-general} produces a family
$\{\Pcal_\alpha:0<\alpha<2\}$ with global upper bounds and matching
lower-bound peaks at the corresponding exponents.  Moreover, part~(2) of
\cref{cor:z2-polynomial-from-general} says that these Dehn functions are
pairwise not finely equivalent.  Thus, in the finer comparison sense used in
this paper, the presentations give continuum many distinct filling-growth
behaviors.

Infinite presentations behave differently.  Their Dehn functions depend on the
chosen presentation, since very long unit-cost relators can change filling area
dramatically.  Still, infinite presentations and relative Dehn functions are
central in geometric group theory.  Osin's approach to relatively hyperbolic
groups uses relative presentations encoding peripheral subgroups
\cite{OsinRelHyp}.  Grigorchuk and Ivanov studied infinite-presentation Dehn
functions from another viewpoint, introducing variants that minimize vertices or
edges rather than faces in van Kampen diagrams \cite{GrigorchukIvanov}.
Cummins investigated algorithmic aspects for decidable infinite
presentations~\cite{Cummins}.  Osin and Rybak introduced a canonical
alternative, Dehn spectra, by allowing all null relators up to a length scale;
their construction gives quasi-isometry invariants of finitely generated
groups~\cite{OsinRybak}.  Here we take the complementary
presentation-dependent viewpoint: the group and finite generating set are fixed,
while the infinite relator set is chosen deliberately.

This paper studies low-degree filling behavior with finite generating sets and infinite defining relator sets.  If $G$ is finitely presentable and
$\mathcal P=\langle X\mid R\rangle$ is a presentation of $G$ with $X$ finite
and $R$ possibly infinite, then there is a finite subset $R_0\subseteq R$ such
that $\mathcal P_0=\langle X\mid R_0\rangle$ also presents $G$.  Hence the
Dehn function of $\mathcal P$ is bounded above by the Dehn function of the
finite presentation $\mathcal P_0$ of $G$.

The paper focuses on classical groups, especially free abelian and
small-cancellation groups; standard closed surface groups occur as examples.
Each defining relator has area cost one, regardless of length.  This convention
separates filling area from relator length and from the algorithmic complexity of
recognizing defining relators.  The resulting function is presentation-dependent,
not a quasi-isometry invariant.  The goal is to identify subquadratic phenomena,
impossible for finite presentations, that nevertheless occur for familiar groups
under this convention.

Unless explicitly stated otherwise, a presentation in this paper has the form
$\Pcal=\langle X\mid \Rcal\rangle$, where $X$ is finite and $\Rcal$ is an
arbitrary, possibly infinite, set of cyclically reduced words in the free group
$F(X)$.  It defines the group
\[
  G(\Pcal)=F(X)/\ncl_{F(X)}(\Rcal).
\]
The associated Dehn function counts the minimum number of defining relator cells
in a van Kampen diagram; every relator in $\Rcal$ has unit cost.  This
convention is the usual area convention for finite presentations, but here
$\Rcal$ may be infinite.  Because the usual coarse comparison identifies every bounded or sublinear
function with the linear function, logarithmic and sublinear growth are compared
using the finer domination relation introduced in \cref{def:fine-comparison}.

The first main result constructs an explicit infinite presentation of $\Z^2$
whose Dehn function lies in the finite-presentation subquadratic gap.

\begin{thm}[A dyadic-strip presentation of $\Z^2$]\label{thm:intro-dyadic}
Let
\[
  \Pcal=\left\langle a,b\ \middle|\ [a^{2^k},b],\ k=0,1,2,\ldots\right\rangle.
\]
Then $\Pcal$ presents $\Z^2$, and
\[
  \delta_{\Pcal}(n)\fsimeq  n\log n.
\]
In fact, $cn\log n\le \delta_{\Pcal}(n)\le Cn\log n$ for all
sufficiently large $n$ and some constants $0<c<C$.
\end{thm}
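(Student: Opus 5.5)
Since $[a,b]$ is among the relators, $\Pcal$ presents $\Z^2$. Each relator $[a^{2^k},b]$ holds in $\Z^2$ and so adds no new relations. The substance of the theorem is the two-sided bound $cn\log n\le\delta_\Pcal(n)\le Cn\log n$.

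\emph{Upper bound.} Take a null-homotopic word $w$ of length $n$. I will first convert $w$ to the normal form $a^p b^q$ by moving letters past one another, and then estimate the cost of the moves. A relator $[a^{2^k},b]$ is a single cell that commutes $a^{2^k}$ past one letter $b^{\pm1}$. Hence commuting a block $a^m$ past a single $b^{\pm1}$ costs at most the number of ones in the binary expansion of $|m|$, which is at most $\log_2|m|+1$.

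A naive sweep moves each $b$-letter past all $a$-letters to its left, with $|m|\le n$ each time. This gives the bound $n(\log_2 n+1)$, provided the $a$-letters can first be gathered into blocks. Gathering $a$'s is free only between consecutive $b$-letters, so I will argue by induction along the word instead. Write
\[
  w=a^{m_1}b^{\eps_1}a^{m_2}b^{\eps_2}\cdots.
\]
Move each $b^{\eps_i}$ leftward past the accumulated power $a^{m_1+\cdots+m_i}$, which has length at most $n$; each such move costs $O(\log n)$ cells. After this, $w$ becomes $b^{\sum\eps_i}a^{\sum m_i}$, which is freely trivial. There are at most $n$ $b$-letters, so the total is $O(n\log n)$.

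More precisely, I view the diagram as a stack of horizontal strips. Strip $i$ is the commutation of $b^{\eps_i}$ with $a^{s_i}$, where $s_i$ is the prefix exponent sum. The strip is filled using the binary decomposition of $s_i$, and its boundary words match up to free reduction.

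\emph{Lower bound.} This is the main obstacle. I would use the test words
\[
  w_N=[a^{N},b^{N}]\quad\text{with}\quad N=2^j-1,
\]
or the words $[x,b^N]$ with $x$ chosen so that every $a$-power has high binary weight. The tool is $b$-corridors. In any van Kampen diagram for $w_N$, every cell has exactly two $b$-edges, so the $b$-edges organize into corridors (and possibly annuli). Each of the $N$ $b$-letters on the top boundary side is joined by a corridor to a $b$-letter on the bottom side. Since the corridor side labels are free-group equal to powers of $a$, they are forced to be equal to the corridor endpoints, so a corridor connecting boundary $b$-edges at horizontal $a$-offset $m$ has side label $a^m$. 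A corridor built from cells $[a^{2^{k_i}},b]^{\pm1}$ has side label $a^{\sum\pm2^{k_i}}$. The minimal number of signed powers of $2$ summing to $m$ is the NAF weight of $m$, and this is at least roughly half the binary weight of $m$.

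Here I must choose the words carefully so that each corridor spans a length that is hard to write as a short signed sum. One option is to take $m$ with NAF weight of order $\log m$, for instance $m=\sum_{i<j}4^i$ with $m\approx n$. With the word $[a^m,b^{n}]$, of length $\Theta(n)$, all $n$ corridors have side label $a^m$. They are disjoint, so the area is at least $n\cdot\mathrm{NAF}(m)\ge c\,n\log n$.

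The remaining points to check carefully are the following. Corridors must connect the two $b^n$ sides rather than pair within one side; this holds because the $b$-exponents on one side all have the same sign, so corridor orientation forbids a corridor from returning to the same side. Annuli only add area. Finally, the NAF weight lower bound for this $m$ is the standard fact that NAF is the minimal signed-binary representation, and for the alternating pattern $1010\ldots$ its weight is exactly $j$.
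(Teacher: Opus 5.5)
Your proof is correct, but both halves take a different route from the paper.

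\textbf{Upper bound.} The paper cuts the closed walk into simple lattice loops. It then slices the region bounded by each loop into horizontal row intervals, at most $p/2$ of them since their endpoints are vertical boundary edges, and tiles each interval by its binary decomposition. This gives $\frac n2(\lfloor\log_2 n\rfloor+1)$. Your sweep is purely algebraic. It writes $w$ in the free group as a product of conjugates, by powers of $b$, of the commutators $[a^{s_i},b^{\eps_i}]$ with $|s_i|\le n$, each costing at most the binary weight of $|s_i|$. This avoids the cutting lemma and the Jordan-curve/row-interval analysis, at the cost of at most a factor $2$ in the constant.

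\textbf{Lower bound.} The paper uses no corridors. It projects an arbitrary diagram for $[a^L,b^L]$ to a cellular $2$-chain, and uses the absence of finite $2$-cycles in the grid to identify that chain with the square $Q_L$. It then applies the row-sum homomorphisms $\rho_y$, so the cells projecting into each row have signed widths summing to $L$. Your $b$-corridors are the diagrammatic counterpart of these rows. Your count is valid because the vertex map to $\Z^2$ forces each corridor side, being an $a$-path, to join two boundary vertices at the same height and at horizontal distance $m$; this is what ``forced to equal the endpoints'' has to mean. The same observation makes most of the orientation and planarity bookkeeping unnecessary. The homological version is more robust, since it never has to discuss annuli, corridor orientations, or singular diagrams. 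Your test words $[a^m,b^n]$, with $m$ and $n$ independent, are slightly more flexible than the paper's squares.

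\textbf{Arithmetic.} You cite minimality of the non-adjacent form, after merging repeated powers of two. The paper instead proves $\tau((4^t-1)/3)=t$ directly from the recurrences $\tau(2m)=\tau(m)$ and $\tau(2m+1)=1+\min\{\tau(m),\tau(m+1)\}$.

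\textbf{Corrections.} Delete the claim that the signed binary weight is at least roughly half the ordinary binary weight. It is false: $2^j-1=2^j-2^0$ has binary weight $j$ but signed weight $2$. In particular, your first suggested test word $[a^N,b^N]$ with $N=2^j-1$ gives only a linear lower bound. It is your final choice $m=\sum_{i<j}4^i$, which is exactly the paper's $L_t$, that makes the argument work.

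Also, the theorem asks for the lower bound at all sufficiently large lengths, not only along a sequence. To get this, take $m$ from this sequence within a bounded factor of the target length and use monotonicity of $\delta_{\Pcal}$, as the paper does at the end of its lower-bound proposition. This step is routine but should be stated.
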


\noindent
This statement is restated as \cref{thm:main} in
\cref{sec:dyadic-presentation}.  Its proof is geometric and arithmetic.  The
upper bound decomposes lattice fillings into horizontal dyadic strips, while the
lower bound uses a row-sum argument together with a signed-binary-weight
estimate.

\begin{rem}[A contrasting square-commutator presentation]\label{rem:intro-square-commutator}
The dyadic-strip presentation is deliberately sparse: it permits large
horizontal strips, but only of height one and only in dyadic widths.  This
restriction is important.  If, instead, one adds all axis-parallel square
commutator relators
\[
  [a^m,b^m],\qquad m\ge 1,
\]
to the standard generators of $\Z^2$, then we can prove that the resulting
presentation of $\Z^2$ has linear Dehn function.  More precisely, the presentation
\[
  \left\langle a,b\ \middle|\ [a^m,b^m],\ m=1,2,3,\ldots\right\rangle
\]
has area at most $n/2$ for every null word of length $n$, while
$\Area([a,b]^k)=k$ for every $k\ge1$.  We omit the proof for space reasons.
\end{rem}

The remaining results are based on scalar relation invariants.  Let
$G=\langle X\mid R\rangle$ be a finite presentation, let $F(X)$ be the free
group on $X$, and let $N=\ker(F(X)\to G)$.  A homomorphism $\chi:N\to\Z$ that is
invariant under conjugation by elements of $F(X)$ is called a \emph{scalar
relation invariant}.  Equivalently, $\chi$ factors through the relation module
$N/[F(X),N]$.  Relation modules and the Hopf exact sequence are standard tools
in group homology; see Brown \cite{BrownCohomology}.  For such a homomorphism
put
\[
  \Phi_\chi(n)=\max\{|\chi(w)|:w\in N,
  |w|_X\le n\}.
\]

\begin{thm}[Logarithmic presentations from scalar invariants]\label{thm:intro-general-log}
Let
\[
  G=\langle X\mid R\rangle
\]
be a finite presentation.  Let $F(X)$ be the free group on $X$, and let
$N=\ker(F(X)\to G)$.  Suppose that $\chi:N\to\Z$ is a conjugacy-invariant scalar
relation invariant with infinite image and that, for some $D\ge 1$ and $C>0$,
\[
  \Phi_\chi(n)\le Cn^D+C.
\]
Then there exists an infinite set $S$ of cyclically reduced words in $F(X)$ such
that, for
\[
  \Qcal=\langle X\mid S\rangle,
\]
the group presented by $\Qcal$ is isomorphic to $G$ and
\[
  \delta_{\Qcal}(n)\fsimeq \log n.
\]
More concretely, after dividing $\chi$ by the positive generator of its image,
then replacing it by $-\chi$ if necessary, and choosing a cyclically reduced word
$q\in N$ with $\chi(q)=1$, one may take $S$ to contain all cyclically reduced
words $u\in N$ with $\chi(u)=0$, together with the words $q^{2^k}$, $k\ge 0$.
\end{thm}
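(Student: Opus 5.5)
We use the concrete relator set $S$ described at the end of the statement. Normalize $\chi$ so that its image is $\Z$, and choose a cyclically reduced $q\in N$ with $\chi(q)=1$. Such a $q$ exists: take any $w\in N$ with $\chi(w)=1$ and cyclically reduce it; conjugation does not change $\chi$. Set $S=S_0\cup\{q^{2^k}:k\ge0\}$, where $S_0$ is the set of cyclically reduced $u\in N$ with $\chi(u)=0$.

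\emph{The group is $G$.} Every element of $S$ lies in $N$, so $\ncl(S)\subseteq N$. For the reverse inclusion, take $w\in N$ and let $m=\chi(w)$. Then $wq^{-m}\in N$ and $\chi(wq^{-m})=0$, so a cyclic reduction of $wq^{-m}$ lies in $S_0$. Hence $wq^{-m}\in\ncl(S)$, and since $q\in S$ we get $w\in\ncl(S)$. The set $S$ is infinite because $\chi$ has infinite image, so $q$ has infinite order and the words $q^{2^k}$ are pairwise distinct.

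\emph{Upper bound $O(\log n)$.} Let $w\in N$ with $|w|\le n$ and $m=\chi(w)$, so $|m|\le Cn^D+C$. Write $m$ in binary as $m=\pm\sum_{k\in K}2^k$, where $|K|\le \log_2|m|+1=O(\log n)$. Then
\[
w\cdot\Bigl(\prod_{k\in K}q^{2^k}\Bigr)^{\mp1}
\]
lies in $N$ and has $\chi=0$. Its cyclic reduction is therefore a single relator in $S_0$, and one face suffices for it. Consequently $w$ is a product of one conjugate of a relator in $S_0$ and $|K|$ conjugates of the relators $q^{\pm 2^k}$. This gives $\operatorname{Area}(w)\le |K|+1=O(\log n)$. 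The only care needed is that a product of conjugates of relators corresponds to a van Kampen diagram with that many faces, which is standard. Words with $\chi(w)=0$ have area at most $1$.

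\emph{Lower bound.} This is the main step. Every face of a van Kampen diagram contributes $\chi(r)^{\pm1}$ to $\chi(w)$, because $\chi$ is conjugation invariant and $w$ equals the product of conjugates of the face labels in $F(X)$. The contributions are $0$ for faces labelled by $S_0$ and $\pm 2^k$ for faces labelled by $q^{2^k}$. So an area-$A$ diagram writes $\chi(w)$ as a signed sum of at most $A$ powers of two. Any integer $m$ needs at least $\lceil \mathrm{NAF}(m)\rceil$ such terms, where $\mathrm{NAF}$ denotes its non-adjacent-form weight. For $m=2^j-1$ this weight is $2$, which is too small, so we pick $m$ with large NAF weight, for example $m=\sum_{i<j}4^i$, whose weight is $j$. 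This lower bound on the number of signed powers of two is a classical fact and can be proved by induction on $m$ via parity.

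To realize such an $m$ cheaply, take $w_j=q_{i}$-products: let $w=\prod_{i<j}x_i q^{4^i}x_i^{-1}$, or better $w=\prod_i q^{4^i}$ written as a word. That word has length $|q|\sum 4^i$, which is exponential in $j$, so it would only yield an $\Omega(\log\log n)$ lower bound. We therefore need a word of length polynomial in $m$'s size, and here the hypothesis that $\chi$ has infinite image must be used more cleverly. We need elements $w\in N$ with $\chi(w)$ growing at least polynomially relative to $|w|$, that is, $\Phi_\chi(n)\ge n^{\epsilon}$. Having $\chi(w)=1$ at length $|q|$ only gives linear growth via $q^m$, whose length is $|q|m$; that gives $\log m\approx\log n$, which is enough. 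So take $w_m=q^{m}$ with $m=\sum_{i<j}4^i$. Then $|w_m|\le|q|m$, and the area is at least the NAF weight $j\approx \tfrac12\log_2 m\ge c\log n$. This gives the lower bound along a sequence of lengths.

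If the fine comparison $\fsimeq$ of \cref{def:fine-comparison} requires the bound for all large $n$, we pad $w_m$ with a trivial free word $x x^{-1}$ to fill intermediate lengths. Since $\delta$ is monotone in $n$, and consecutive lengths $|q|m_j$ differ by a bounded factor of about $4$, this yields $\delta_\Qcal(n)\ge c'\log n$ for all large $n$. The expected obstacle is matching the precise form of \cref{def:fine-comparison}; the arithmetic is standard.
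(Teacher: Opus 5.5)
Your proposal is correct and follows the paper's proof essentially step for step. It uses the same relator set, the same one-cell reduction of $wq^{-\chi(w)}$ followed by a binary expansion for the $O(\log n)$ upper bound, and the same lower bound via $q^{M}$ with $M=\sum_{i<j}4^i$: applying $\chi$ to the product-of-conjugates expression shows that $M$ needs $j$ signed powers of two, and monotonicity with bounded length ratios gives the bound for all $n$. Two asides are off but harmless: the $\log\log n$ remark is mistaken, since $\prod_i q^{4^i}$ is the same word $q^{M}$, and padding by $xx^{-1}$ is unnecessary and would break free reduction — monotonicity of $\delta$ alone suffices, as in the paper.
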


\noindent
This statement is proved in \cref{sec:general-relation-invariant}; see
\cref{thm:general-log}.  The construction kills all words in $\ker\chi$ and adds
relators $q^{2^k}$, where $\chi(q)=1$.  It is not meant to be effective in
general: without extra hypotheses, the set of cyclically reduced words in
$N\cap\ker\chi$ need not be decidable.  The logarithmic conclusion here, as
throughout the paper, is meant with respect to the fine comparison relation
$\fsimeq$ introduced in \cref{def:fine-comparison}.

The same scalar mechanism gives polynomial-envelope examples.  Here
\emph{matching peaks} means that a lower bound of order $n^\alpha$ holds along
an infinite sequence of input lengths.  The theorem gives a global upper bound
together with these lower-bound peaks, not a pointwise equivalence statement.

\begin{thm}[Polynomial envelopes from scalar invariants]\label{thm:intro-general-polynomial}
Let
\[
  G=\langle X\mid R\rangle
\]
be a finite presentation.  Let $F(X)$ be the free group on $X$, and let
$N=\ker(F(X)\to G)$.  Assume that there is a conjugacy-invariant scalar
relation invariant $\chi:N\to\Z$ with infinite image and that, for some
$D\ge 1$ and $C>0$, the growth function
\[
  \Phi_\chi(n)=\max\{ |\chi(w)|:w\in N,\ |w|_X\le n\}
\]
satisfies $\Phi_\chi(n)\le Cn^D+C$.

Then, for every $0<\alpha<D$, there exists an infinite set $S$ of cyclically
reduced words in $F(X)$ such that, for
\[
  \Qcal=\langle X\mid S\rangle,
\]
the group presented by $\Qcal$ is isomorphic to $G$ and the following conclusions hold.
\begin{enumerate}
\item There is a constant $C'>0$ such that
\[
  \delta_{\Qcal}(n)\le C' n^\alpha+C'
\]
for all $n\ge1$.
\item If, moreover, for this value of $\alpha$ the hard values $H_j$ from
the polynomial construction are $D$-efficiently realized by $\chi$ in the sense
of \cref{def:efficient-realization}, then there exist a constant $c>0$ and an
infinite sequence $n_i\to\infty$ such that
\[
  \delta_{\Qcal}(n_i)\ge c n_i^\alpha.
\]
\end{enumerate}
\end{thm}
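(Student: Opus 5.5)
We follow the scheme of \cref{thm:intro-general-log}, replacing the dyadic powers $q^{2^k}$ by a sparser family of powers calibrated to the exponent $\alpha$. First normalize $\chi$: divide by the positive generator of its image and, if necessary, replace $\chi$ by $-\chi$. Choose a cyclically reduced $q\in N$ with $\chi(q)=1$. The relator set $S$ will consist of
\begin{itemize}
\item all cyclically reduced $u\in N$ with $\chi(u)=0$;
\item the words $q^{h}$ for $h$ ranging over a set $\{H_j\}$ of ``hard values'' together with small generators.
\end{itemize}
Since $1\in\{h\}$, i.e.\ $q\in S$, the normal closure of $S$ contains $\ker\chi\cap N$ and $q$. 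Because $\chi(N)=\Z$ is generated by $\chi(q)$, this normal closure is all of $N$, and conversely $S\subseteq N$. Hence $\Qcal$ presents $G$. Any $w\in N$ is then conjugacy-equivalent, modulo one $\ker\chi$ relator, to $q^{\chi(w)}$. Concretely, $w\,q^{-\chi(w)}$ is freely equal to a conjugate of a cyclically reduced element of $\ker\chi$. So the area of $w$ is at most $1+A(\chi(w))$, where $A(m)$ is the minimal number of signed relators $q^{h}$ needed to express $m$ as $\sum \pm h$.

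For part (1), set $\beta=\alpha/D<1$ and choose hard values $H_j$ growing super-exponentially, say $H_j=2^{2^j}$ or a similar rapidly increasing sequence. Alongside these include $q^{1}$. By the hypothesis $|\chi(w)|\le \Phi_\chi(|w|)\le Cn^D+C$, a null word of length $n$ has $|m|\lesssim n^D$. We then need $A(m)\lesssim |m|^{\beta}+1$. A greedy expansion achieves this: write $m$ using the largest $H_j\le |m|$, and bound the remainder by the next-smaller scale. Equivalently, the set $\{H_j\}$ must be dense enough that greedy representation costs $O(|m|^\beta)$. Taking $H_j$ with ratios $H_{j+1}/H_j\approx H_j^{(1-\beta)/\beta}$-ish, and using multiples of $H_{j}$ to cover $[0,H_{j+1})$ with cost $H_{j+1}/H_j\lesssim H_{j+1}^{\beta}$, gives the bound by induction on scales. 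Therefore $\delta_\Qcal(n)\le 1+A(\chi(w))\le C'n^{\alpha}+C'$.

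For part (2), we need a lower bound on area along $n_i$. The key tool is the invariant $\chi$ itself. Any van Kampen diagram over $\Qcal$ for $w$ satisfies $\chi(w)=\sum \pm \chi(\text{relator})$, because $\chi$ is conjugacy-invariant and additive on products of conjugates. Relators in $\ker\chi$ contribute $0$, and $q^{h}$ contributes $\pm h$. So $\mathrm{Area}(w)\ge A(\chi(w))$ exactly. The efficient-realization hypothesis of \cref{def:efficient-realization} supplies words $w_i\in N$ with $\chi(w_i)$ equal to a hard target value $m_i$ (chosen, e.g., as $H_{j+1}-1$ or a midpoint between scales) and length $n_i\lesssim m_i^{1/D}$. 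It then remains a number-theoretic lemma that $A(m_i)\gtrsim m_i^{\beta}$, because sums of few terms $\pm H_k$ cannot reach $m_i$. The large terms $H_{k}$ with $k>j$ are too big to help without cancelling pairs, which cost as much. This gives $\delta_\Qcal(n_i)\ge c\,m_i^{\beta}\ge c\,n_i^{\alpha}$.

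The main obstacle is the arithmetic lemma behind part (2). We must choose the hard values so that the upper-bound greedy cost and the lower bound match up to constants. The difficulty is ruling out clever signed combinations that use larger $H_k$ with cancellation. I would handle this by reducing modulo $H_j$ and using the divisibility chain $H_j\mid H_{j+1}$, which gives a digit-expansion (mixed-radix) structure. The minimal signed-digit cost is then computable digit by digit, as in the signed-binary-weight estimate used for \cref{thm:intro-dyadic}.
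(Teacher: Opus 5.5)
Your architecture is the paper's, but the calibration is inverted and, as written, the proof fails. The shared structure: the relators are all cyclically reduced $u\in N$ with $\chi(u)=0$ together with $q^{s_j}$ for a divisibility chain $1=s_0\mid s_1\mid\cdots$ (your relator exponents $H_j$ play the role of the paper's coin sizes $s_j$). The upper bound is one $\ker\chi$ cell plus a shortest signed $\{s_j\}$-expansion of $\chi(w)$, and the lower bound comes from applying $\chi$ to the boundary relation of a diagram.

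To get exponent $\alpha$ you need $s_{j+1}/s_j\approx s_{j+1}^{\beta}$, i.e.\ $s_{j+1}\approx s_j^{1/(1-\beta)}$, equivalently $s_{j+1}/s_j\approx s_j^{\beta/(1-\beta)}$. Your ratio $s_j^{(1-\beta)/\beta}$ gives $s_{j+1}\approx s_j^{1/\beta}$, which is the correct calibration for $1-\beta$ in place of $\beta$, so the resulting envelope has exponent $D-\alpha$. For $\alpha<D/2$ you therefore only get $O(n^{D-\alpha})$; for signed area on $\Z^2$ the claimed $O(n^\alpha)$ is then actually false. For $\alpha>D/2$ no $n^\alpha$ peaks can occur. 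Only $\alpha=D/2$ survives, which is the one case your example $2^{2^j}$ covers. The paper sets $p_\beta=1/(1-\beta)$, $m_j=2\lceil s_j^{p_\beta-1}/2\rceil$, $s_{j+1}=m_js_j$, so that $s_j^{p_\beta}\le s_{j+1}\le 3s_j^{p_\beta}$ with even integer ratios.

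Even with the right chain, the crude bound ``covering $[0,s_{J+1})$ costs about $s_{J+1}/s_J$'' only gives $\tau_\beta(m)\le Cs_{J+1}^\beta$. For $m$ just above $s_J$ this is about $m^{\beta/(1-\beta)}$, which is too weak. You need the pointwise mixed-radix estimate: the top digit satisfies $m/s_J\le 3^{1/p_\beta}m^\beta$ (from $m<s_{J+1}\le 3s_J^{p_\beta}$), and the lower digits satisfy $\sum_{i<J}m_i\le Cm_{J-1}\le 3Cm^\beta$.

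For part (2), your suggested target $H_{j+1}-1$ fails immediately: in the paper's notation it is $s_{j+1}-s_0$, a signed sum of just two coins. The right target is the midpoint $s_{j+1}/2$, which is exactly the paper's hard value $H_j$. This matters because the efficient-realization hypothesis in the statement refers to these specific numbers. Also, a hard value must never itself be a coin size, so calling the relator exponents ``hard values'' is misleading.

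The arithmetic lemma you leave open needs neither a digit-by-digit computation nor a pairing argument for large coins. Given $s_{j+1}/2=\sum_{i=1}^r\eps_i s_{k_i}$, reduce modulo $s_{j+1}$:
\begin{itemize}
\item every term with $k_i\ge j+1$ vanishes, whatever cancellation it takes part in;
\item the remaining terms each have absolute value at most $s_j$;
\item their sum is congruent to $s_{j+1}/2$, hence has absolute value at least $s_{j+1}/2$.
\end{itemize}
Thus $r\ge m_j/2\ge s_j^{p_\beta-1}/2\ge cH_j^\beta$, using $H_j\le \tfrac32 s_j^{p_\beta}$ and $p_\beta\beta=p_\beta-1$. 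Combine this with $\nu\ge\tau_\beta(\chi(z_j))$ for any diagram with $\nu$ cells and with $|z_j|_X\le KH_j^{1/D}$. This gives $\Area_{\Qcal}(z_j)\ge c'|z_j|_X^{\alpha}$ at $n_j=|z_j|_X$.
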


\noindent
This statement is proved in \cref{thm:general-polynomial}.  The hard values
are the integers $H_j$ defined in the polynomial construction immediately before
\cref{def:efficient-realization}.
Applications to $\Z^2$, torsion-free $C'(1/6)$ small-cancellation groups, and word-hyperbolic groups are given in \cref{sec:scalar-applications}.  Signed area in $\Z^2$ has quadratic growth and
its hard values are efficiently realized by lattice polyomino boundaries of area
$M$ and length $O(M^{1/2})$, giving exponents $0<\alpha<2$.  Torsion-free $C'(1/6)$ small-cancellation groups, and more generally word-hyperbolic groups with suitable scalar invariants, give the range $0<\alpha<1$.

The existence of scalar relation invariants is controlled by standard homology.
The Hopf exact sequence gives the following useful source of examples.

\begin{rem}[Homological source of scalar invariants]\label{rem:intro-homological-source}
Let
\[
  G=\langle X\mid R\rangle
\]
be a finite presentation, let $F(X)$ be the free group on $X$, and let
$N=\ker(F(X)\to G)$.  By \cref{cor:existence-scalar-invariant}, a
conjugacy-invariant homomorphism $N\to\Z$ with infinite image exists exactly
when $N/[F(X),N]$ has positive torsion-free rank.  In particular, such a
homomorphism exists whenever
\[
  \operatorname{rank}_{\Z}H_2(G;\Z)+|X|-\operatorname{rank}_{\Z}H_1(G;\Z)>0.
\]
In the same notation,
\[
  \operatorname{rank}_{\Z}\bigl(N/[F(X),N]\bigr)
  =\operatorname{rank}_{\Z}H_2(G;\Z)+|X|-\operatorname{rank}_{\Z}H_1(G;\Z).
\]
Thus the hypothesis holds if
\[
  \operatorname{rank}_{\Z}H_2(G;\Z)>0
  \quad\text{or}\quad
  |X|>b_1(G),
\]
where $b_1(G)=\operatorname{rank}_{\Z}H_1(G;\Z)$.  It holds, for example, for
closed orientable surface groups, because their second homology has rank one.
It also holds whenever $|X|>b_1(G)$; in particular it holds for any finite or
perfect group equipped with a nonempty finite generating set, and more generally
for any group whose chosen finite generating set contains a generator which is
redundant at the level of rational abelianization.  Conversely, this scalar
method gives no invariant when
\[
  \operatorname{rank}_{\Z}H_2(G;\Z)=0
  \quad\text{and}\quad
  |X|=b_1(G)
\]
for the chosen generating alphabet.  Word-hyperbolicity is not needed for existence
of such an invariant; in the hyperbolic case it is used to control the growth
function $\Phi_\chi$.
\end{rem}

The preceding theorems give the following concrete families.

The following statement is proved in
\cref{cor:z2-log-from-general,cor:z2-polynomial-from-general}.
\begin{thm}[Free abelian consequences for $\Z^2$]\label{thm:intro-free-abelian}
Let
\[
  A_2=\Z^2=\langle a,b\mid [a,b]\rangle.
\]
Then the following conclusions hold.
\begin{enumerate}
\item There exists an infinite set $S$ of cyclically reduced words in
$F(a,b)$ such that, for
\[
  \Qcal=\langle a,b\mid S\rangle,
\]
the group presented by $\Qcal$ is isomorphic to $A_2$ and
\[
  \delta_{\Qcal}(n)\fsimeq\log n.
\]
\item For every $0<\alpha<2$, there exists such an infinite presentation
$\Qcal_\alpha$ whose Dehn function satisfies
\[
  \delta_{\Qcal_\alpha}(n)\le Cn^\alpha+C
\]
for all $n\ge1$ and has matching $n^\alpha$-order lower-bound peaks.
\end{enumerate}
\end{thm}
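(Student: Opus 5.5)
The plan is to derive both parts from \cref{thm:intro-general-log} and \cref{thm:intro-general-polynomial}, applied to the finite presentation $A_2=\langle a,b\mid [a,b]\rangle$. The scalar relation invariant will be signed area. For $w\in N=\ker(F(a,b)\to\Z^2)$, the word $w$ traces a closed lattice path in $\R^2$. Let $\chi(w)$ be the algebraic area it encloses, that is, $\sum_{p}\operatorname{wind}(w,p)$ over unit squares, or equivalently $\oint x\,dy$ along the path.

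The first step is to check that $\chi$ has the required properties.
\begin{enumerate}
\item It is additive on $N$, since concatenating closed paths based at the origin adds their $\oint x\,dy$.
\item It is invariant under conjugation by $g\in F(a,b)$. Conjugating translates the closed path by the image of $g$ and adds a backtracked segment, and neither changes $\oint x\,dy$ for a closed loop.
\item Its image is infinite, since $\chi([a,b])=\pm1$. Replacing $\chi$ by $-\chi$ if needed, take $q=[a,b]$ with $\chi(q)=1$.
\end{enumerate}
(Equivalently, $\chi$ is the generator of the torsion-free part of $N/[F,N]$, which has rank $\operatorname{rank}H_2(\Z^2)+2-2=1$, as in \cref{rem:intro-homological-source}.)

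The second step is the growth bound. A closed lattice path of length $n$ lies in a box of side at most $n/2$, and each winding number is at most $n/4$ in absolute value. It is cleaner to argue by the isoperimetric inequality: $|\oint x\,dy|\le n^2/16$. This gives $\Phi_\chi(n)\le Cn^2+C$ with $D=2$. Part (1) then follows directly from \cref{thm:intro-general-log}, with $S$ consisting of all cyclically reduced words in $N$ of zero signed area together with the words $[a,b]^{2^k}$. The same bound gives the upper estimate $\delta_{\Qcal_\alpha}(n)\le Cn^\alpha+C$ in part (2) for every $0<\alpha<2$ via \cref{thm:intro-general-polynomial}(1).

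The main obstacle is the lower-bound peaks in part (2). These require verifying that the hard values $H_j$ from the polynomial construction are $2$-efficiently realized by $\chi$ in the sense of \cref{def:efficient-realization}. I expect this to mean: for each $H_j$ there should be a word $w_j\in N$ with $\chi(w_j)=H_j$ and $|w_j|\le C H_j^{1/2}$ (and presumably $w_j$ cyclically reduced, or realized in whatever normalized form the definition demands). I would construct $w_j$ as the boundary of a polyomino of area exactly $M=H_j$. Let $s=\lfloor\sqrt M\rfloor$ and $M=s^2+r$ with $0\le r\le 2s$. Take an $s\times s$ square, attach a partial column of height $\min(r,s)$, and if needed a further partial row of length $r-s$. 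Reading the boundary counterclockwise gives a simple closed lattice path, hence a cyclically reduced word, with signed area $M$ and length at most $4s+4\le 4\sqrt M+4$. If the definition also asks for realization of $-H_j$ or of multiples, I would handle these by reversing orientation or by scaling rectangles. With efficient realization verified, \cref{thm:intro-general-polynomial}(2) supplies $n_i\to\infty$ with $\delta_{\Qcal_\alpha}(n_i)\ge c\,n_i^\alpha$, which completes part (2).
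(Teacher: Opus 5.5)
Your proposal is correct and follows essentially the same route as the paper: signed area as the scalar invariant with $q=[a,b]$ and $\Phi_\chi(n)=O(n^2)$, then the general logarithmic and polynomial-envelope theorems with $D=2$, and $2$-efficient realization of the hard values $H_j$ by boundaries of near-square polyominoes of area $H_j$ and perimeter $O(\sqrt{H_j})$. The paper uses a $\lfloor\sqrt{H_j}\rfloor\times q_j$ rectangle plus one partial row, an inessential variant of your square-plus-column-plus-row shape; for the quadratic bound, the crude estimate $|\sum_e\epsilon_e x_e|\le n^2$ over traversed vertical edges suffices, so you need not invoke a sharp isoperimetric constant for non-simple paths.
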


\noindent
We also obtain the corresponding scalar-area result for the standard
presentation of $\Z^d$, for every $d\ge2$: after projecting to the first two
standard generators, the conclusions above extend to standard finite generating
sets of higher-rank free abelian groups; see \cref{cor:zd-from-general}.

The following statement is proved in \cref{cor:small-cancellation-from-general}.
\begin{thm}[Small-cancellation consequences]\label{thm:intro-small-cancellation}
Let $X$ be finite, and let $R\subseteq F(X)$ be a nonempty finite symmetrized
$C'(1/6)$ set of nontrivial cyclically reduced words, none of which is a proper
power.  Put
\[
  G=\langle X\mid R\rangle.
\]
Then the following conclusions hold.
\begin{enumerate}
\item The group $G$ has an infinite presentation on the same generating set $X$
with Dehn function $\fsimeq\log n$.
\item For every $0<\alpha<1$, the group $G$ has an infinite presentation on $X$
whose Dehn function satisfies
\[
  \delta(n)\le Cn^\alpha+C
\]
for all $n\ge1$ and has matching $n^\alpha$-order lower-bound peaks.
\end{enumerate}
\end{thm}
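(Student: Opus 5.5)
The plan is to derive both parts from \cref{thm:intro-general-log} and \cref{thm:intro-general-polynomial} with $D=1$. This requires a conjugacy-invariant scalar relation invariant $\chi:N\to\Z$ with infinite image and $\Phi_\chi(n)\le Cn+C$. The $D$-efficient realization hypothesis must also be verified for the hard values $H_j$, which gives the lower-bound peaks in part~(2).

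\emph{Existence of $\chi$.} A finite $C'(1/6)$ presentation is aspherical when no relator is a proper power (Lyndon). So the relation module $N/[F(X),N]$ is the free $\Z$-module on one representative relator per cyclic class $\{r,r^{-1}\}$, i.e.\ it is isomorphic to $\Z G$-coinvariants of the free module $\bigoplus_{r}\Z G$. Pick one relator $r_1\in R$ and let $\chi$ be the coordinate of $r_1$: for $w\in N$, $\chi(w)$ is the signed count of $r_1^{\pm1}$-faces in any van Kampen diagram for $w$. Well-definedness is asphericity, equivalently the identification of $N/[F,N]$ just described. This gives a conjugacy-invariant homomorphism with $\chi(r_1)=1$, hence infinite image. (Alternatively, \cref{cor:existence-scalar-invariant} with $\operatorname{rank}H_2(G)+|X|-b_1(G)>0$ works, but the explicit face count is what controls growth.)

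\emph{Linear growth.} By Greendlinger's lemma, Dehn's algorithm works: every null word $w$ of length $n$ has a reduced van Kampen diagram with at most $n$ faces, since each Dehn reduction shortens the word and uses one face. Since $\chi(w)$ is computed from any diagram, $|\chi(w)|\le \#\text{faces}\le n$. So $\Phi_\chi(n)\le n$, and $D=1$. Part~(1) now follows directly from \cref{thm:intro-general-log}, and the upper bound in part~(2) from \cref{thm:intro-general-polynomial}(1), for each $0<\alpha<1$.

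\emph{Efficient realization (main obstacle).} We need words $w_j\in N$ with $\chi(w_j)=H_j$ and $|w_j|\le C H_j$, as \cref{def:efficient-realization} requires with $D=1$. The first candidate is $w_j=r_1^{H_j}$, of length $|r_1|H_j$, with $\chi(w_j)=H_j$. Since $r_1$ is not a proper power, $r_1^{H}$ is cyclically reduced and has the expected $\chi$-value. If the definition also demands cyclic reducedness or a form avoiding shortcuts in $\Qcal$, that is the delicate point. Its lower bound is handled inside \cref{thm:intro-general-polynomial}(2) through the additivity of $\chi$ over the faces of a $\Qcal$-diagram. So I would just check the precise wording of \cref{def:efficient-realization} against $r_1^{H_j}$ and adjust constants. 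If it requires more than linear-length realizability, I would concatenate conjugates of $r_1$ along a geodesic in the Cayley complex, still of linear length. The theorem then follows as the stated consequence.
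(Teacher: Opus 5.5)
Your proposal is correct and follows essentially the paper's route: asphericity gives $N/[F,N]\cong\Z^k$ with basis the relator representatives, $\chi$ is linearly bounded, and powers of a relator realize the hard values. The differences are cosmetic. You take a single coordinate where the paper sums all of them, and you bound $|\chi(w)|\le n$ directly via Dehn's algorithm where the paper invokes the linear Dehn function of a hyperbolic group. Your hedging at the end is unnecessary: \cref{def:efficient-realization} asks only for $z_j\in N$ with $\chi(z_j)=H_j$ and $|z_j|_X\le KH_j$. The word $r_1^{H_j}$ satisfies this with $K=|r_1|$, exactly as in the paper.
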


\noindent
The proof of \cref{thm:intro-free-abelian} uses signed area on $\Z^2$.  For
this invariant $\Phi_\chi(n)=O(n^2)$, and lattice polyomino boundaries realize
each required hard area $M$ with length $O(M^{1/2})$.  The proof of
\cref{thm:intro-small-cancellation} uses the relation-module scalar invariant
supplied by the asphericity of $C'(1/6)$ presentations with no proper-power
relators.  In that case $\Phi_\chi(n)=O(n)$, because such groups are
word-hyperbolic.  The standard presentations of closed orientable surface
groups of genus $g\ge2$ satisfy these small-cancellation hypotheses after
symmetrization: the surface relator has length $4g$, and every piece has length
at most one.  Combining \cref{rem:intro-homological-source} with the preceding
theorems also yields the word-hyperbolic corollaries in
\cref{cor:hyperbolic-scalar-invariant,cor:hyperbolic-homological-source}.

The last result records a more flexible, deliberately redundant construction.
It shows that, if the generating set is allowed to contain a generator which
represents the identity, then the presentation-dependence of Dehn functions for
infinite relator sets is universal among finitely generated groups.

The following statement is proved in \cref{thm:arbitrary-fg-continuum}.
\begin{thm}[Continuum many fine classes for arbitrary finitely generated groups]\label{thm:intro-arbitrary-fg-continuum}
Let $G$ be any finitely generated group.  Then there exists a finite generating
alphabet $X$ mapping onto $G$ with the following property.  For every
$0<\alpha<1$, there is an infinite presentation
\[
  \Pcal_\alpha=\langle X\mid \Rcal_\alpha\rangle
\]
of $G$ such that
\[
  \delta_{\Pcal_\alpha}(n)\le C_\alpha n^\alpha+C_\alpha
\]
for all $n\ge1$, and such that $\delta_{\Pcal_\alpha}$ has matching
$n^\alpha$-order lower-bound peaks.  Consequently, as $\alpha$ varies in
$(0,1)$, these presentations determine continuum many pairwise distinct fine
equivalence classes of Dehn functions.
\end{thm}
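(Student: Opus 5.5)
The plan is to reduce \cref{thm:intro-arbitrary-fg-continuum} to the scalar-invariant machinery of \cref{thm:intro-general-polynomial}, applied to a finitely presented \emph{auxiliary} group, and then to transport the resulting presentations to $G$. Let $Y$ be a finite generating set of $G$, and let $X=Y\sqcup\{t\}$, where the extra letter $t$ is sent to the identity of $G$. Write $N_G=\ker(F(X)\to G)$. The relevant invariant is the exponent sum in $t$: $\chi=\sigma_t:F(X)\to\Z$, restricted to $N_G$. It is a homomorphism, it is conjugation-invariant, and it has infinite image because $t^m\in N_G$ for every $m$. Moreover $|\chi(w)|\le |w|_X$, so $\Phi_\chi(n)\le n$, which gives $D=1$. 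The redundant-generator clause of \cref{rem:intro-homological-source} covers exactly this situation. Since $G$ need not be finitely presented, I would not invoke \cref{thm:intro-general-polynomial} verbatim. Instead I would redo its construction directly: take $S_\alpha$ to be all cyclically reduced words $u\in N_G$ with $\chi(u)=0$, together with the powers $t^{H_j}$ for the hard values $H_j$ of the polynomial construction with exponent $\alpha$.

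The first step is to check that $\langle X\mid S_\alpha\rangle$ presents $G$. Each $y\,t\,y^{-1}t^{-1}$ with $y\in Y$ lies in $N_G$ with $\chi=0$, so $t$ is central. The hard values $H_j$ should include $1$, or have gcd $1$ as in the general construction, which forces $t=1$. Every element of $N_G$ is then a $\chi$-zero word times a power of $t$, so the normal closure of $S_\alpha$ is all of $N_G$.

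For the upper bound, take a null word $w$ of length $n$. Then $|\chi(w)|\le n$, and $w$ equals $t^{\chi(w)}$ times the $\chi$-zero word $w\,t^{-\chi(w)}$ (cyclically reduced after conjugation). That word costs one cell, so $\operatorname{Area}(w)\le 1+\operatorname{Area}(t^{\chi(w)})$. The problem is thereby reduced to writing the integer $\chi(w)$ as a sum of hard values $\pm H_j$. Here I would reuse the counting bound from the proof of \cref{thm:general-polynomial}: any integer of absolute value $m$ is a sum of $O(m^{\alpha/D})=O(m^\alpha)$ terms $\pm H_j$. One subtlety is that a word of the form $t^{k}$ with $|k|\le n$ costs at most one extra cell only when combined correctly. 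Concatenating the $t$-power cells in a single diagram for $t^{\chi(w)}$ works because powers of $t$ commute freely.

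For the lower bound, I would use the fact that every relator has $\chi\in\{0,\pm H_j\}$. Hence a diagram for $t^{M}$ with $M$ a hard value needing $\gtrsim M^\alpha$ summands must have area at least that. The efficient-realization condition (\cref{def:efficient-realization}) with $D=1$ holds trivially, because $t^{M}$ has length exactly $M$. This yields the peaks $\delta(n_i)\ge c\,n_i^\alpha$ with $n_i=M_i$.

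The last step is pairwise fine inequivalence for $\alpha<\beta$. The function $\delta_{\Pcal_\alpha}$ is $O(n^\alpha)$, while $\delta_{\Pcal_\beta}$ has peaks of order $n^\beta$, so $\delta_{\Pcal_\beta}\not\fpreceq\delta_{\Pcal_\alpha}$ under \cref{def:fine-comparison}; this is the argument used in part (2) of \cref{cor:z2-polynomial-from-general}. I expect the main obstacle to be the number-theoretic choice of the $H_j$: the same set must give representability with $O(m^\alpha)$ summands \emph{and} hard values needing $\Omega(M^\alpha)$ summands. I would import this directly from the polynomial construction preceding \cref{def:efficient-realization}, checking that the construction only uses the abstract properties of $\chi$, and not finite presentability of $G$.
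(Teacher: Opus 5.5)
\textbf{The lower-bound step fails as written: you put the hard values themselves into the relator set.}

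Your outline otherwise follows the paper's proof (\cref{lem:dummy-generator-coin-comparison} together with \cref{thm:arbitrary-fg-continuum}). The paper also adjoins $t\mapsto 1$ and uses the $t$-exponent sum $\chi$. It kills every cyclically reduced $\chi$-zero word of $N$ and adds powers of $t$ from a lacunary coin set. The upper bound comes from one whiskered $\chi$-zero cell plus a signed coin expression for $\chi(w)$. The lower bound comes from applying $\chi$ to the boundary relation of a diagram for a pure $t$-power. Like you, the paper never uses finite presentability of $G$.

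The problem is how you instantiate the arithmetic. In \cref{lem:general-arithmetic} the coin denominations are the divisibility chain $s_j$. The hard values $H_j=s_{j+1}/2$ are precisely integers that are \emph{not} coins and lie at distance $s_{j+1}/2$ from every multiple of $s_{j+1}$. You take the relators to be $t^{H_j}$ and then test the lower bound on $t^{M}$ with $M$ a hard value. But then $t^{H_j}$ is itself a defining relator, so $\Area(t^{H_j})=1$. Moreover, the estimate $\tau_\beta(H_j)\ge cH_j^\beta$ you want to import concerns signed sums of the $s_j$, not of the $H_j$. Your upper-bound citation also refers to the wrong coin set. There the conclusion survives, since $H_0=1$ and $H_j=m_jH_{j-1}$ make the $H_j$ a divisibility chain of the same growth type, but it has to be reproved rather than cited.

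\textbf{Fix.} Do what the paper does: take relators $t^{s_j}$, $j\ge0$. Since $s_0=1$, this gives $t=1$ directly, and no centrality argument is needed. Both halves of \cref{lem:general-arithmetic} then apply verbatim. Test on $t^{H_j}$, of length $n_j=H_j$, to get $\delta(n_j)\ge \tau_{S_\alpha}(H_j)\ge c_\alpha n_j^\alpha$.

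Alternatively, keep coins $t^{H_j}$ and test on $t^{H_{j+1}/2}$. In that case you must redo the mixed-radix and mod-$H_{j+1}$ arguments for the $H$-chain.

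With either correction, your remaining steps match the paper's. These are the presentation check, the one-cell reduction to $t^{\chi(w)}$, and the proof that $\delta_{\Pcal_\beta}\not\fpreceq\delta_{\Pcal_\alpha}$ along the peaks.
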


\noindent
The proof is given in \cref{sec:arbitrary-fg}.  We add a redundant generator
$t$ representing the identity of $G$ and use its exponent sum as a scalar
coordinate.  Suitable infinite relator sets in the $t$-direction encode the
signed coin-counting functions from \cref{lem:general-arithmetic}.  The
construction is intentionally redundant, emphasizing the presentation-dependent
nature of these questions, in contrast to the quasi-isometry-invariant Dehn
spectra of Osin--Rybak~\cite{OsinRybak}.

The author is grateful to Denis Osin for suggesting proving \cref{thm:intro-arbitrary-fg-continuum} and for informing the author about the Osin--Rybak~\cite{OsinRybak} results.

\section{Preliminaries}\label{sec:preliminaries}

Let $X$ be a finite set and let $F(X)$ be the free group on $X$.

\begin{conv}[Presentations used in this paper]\label{conv:presentations}
Unless the word ``finite'' is used explicitly, a presentation in this paper has
the form
\[
  \Pcal=\langle X\mid \Rcal\rangle,
\]
where $X$ is finite and $\Rcal$ is an arbitrary, possibly infinite, set of
cyclically reduced words in $F(X)$.  We write
\[
  G(\Pcal)=F(X)/\ncl_{F(X)}(\Rcal),
\]
where $\ncl_{F(X)}(\Rcal)$ denotes the normal closure of $\Rcal$ in $F(X)$.
When no confusion is possible we write simply $G=G(\Pcal)$.
\end{conv}

\begin{defn}[Length, area, and Dehn functions]\label{def:area-dehn}
Let $\Pcal=\langle X\mid \Rcal\rangle$ be a presentation as in
\cref{conv:presentations}, and put $G=G(\Pcal)$.
\begin{enumerate}[label=\textup{(\arabic*)},leftmargin=*]
\item For $w\in F(X)$, let $|w|_X$ denote the length of the freely reduced
representative of $w$ over the alphabet $X^{\pm1}$.  Let
\[
  \|w\|_X=\min\{|v|_X: v\in F(X)\text{ is conjugate to }w\text{ in }F(X)\}
\]
denote the cyclically reduced length of $w$.  Equivalently, $\|w\|_X$ is the
length of a cyclically reduced representative of the conjugacy class of $w$.

\item If $w\in \ncl_{F(X)}(\Rcal)$ is freely reduced, equivalently if
$w=_G1$, the \emph{$\Pcal$-area} of $w$, denoted $\Area_{\Pcal}(w)$, is the
least integer $m$ for which
\[
  w=\prod_{i=1}^m g_i r_i^{\eps_i} g_i^{-1}
\]
in $F(X)$, where $g_i\in F(X)$, $r_i\in\Rcal$, and
$\eps_i\in\{\pm1\}$.  Equivalently, $\Area_{\Pcal}(w)$ is the smallest number
of $2$-cells in a van Kampen diagram over $\Pcal$ with boundary label $w$.
Each defining relator, regardless of its length, has area cost one.

\item The \emph{Dehn function} of $\Pcal$ is
\[
  \delta_{\Pcal}(n)=\max\{\Area_{\Pcal}(w): w\in \ncl_{F(X)}(\Rcal),
  \ |w|_X\le n,\ w\text{ freely reduced}\}.
\]
Equivalently, the maximum is taken over freely reduced words $w\in F(X)$ such
that $w=_G1$ and $|w|_X\le n$.
\end{enumerate}
The definition agrees with the standard Dehn-function definition for finite
presentations, except that $\Rcal$ may be infinite.  We use the finer
comparison relation from \cref{def:fine-comparison} when distinguishing
logarithmic and sublinear growth.  We do not assert that the resulting growth
type is invariant under
changing the infinite presentation; in fact such invariance generally fails.
\end{defn}

\begin{rem}[One-cell fillings with whiskers]\label{rem:one-cell-whisker}
If a cyclically reduced conjugate of a freely reduced null word $u$ is a
defining relator, then $u$ has area at most one.  Equivalently, after writing
$u$ as a conjugate of its cyclically reduced part, one may realize such a
filling by a single relator cell and, if the chosen boundary basepoint is not on
the cyclically reduced part, by a boundary whisker labelled by the conjugating
word.  In particular, if a word first has to be freely reduced and then cyclically
reduced before it is recognized as a defining relator, the original freely
reduced boundary word is still filled by the same one-cell-with-whisker diagram.
We use this elementary observation below without further comment.
\end{rem}

\begin{defn}[Standard comparison]\label{def:standard-comparison}
For monotone nondecreasing functions $f,g:\N\to[0,\infty)$, say that $f$ is
\emph{standardly dominated} by $g$, and write $f\preceq g$, if there is a
constant $C\ge1$ such that
\[
  f(n)\le Cg(Cn+C)+Cn+C
\]
for all $n$.  We say that $f$ and $g$ are \emph{standardly equivalent}, and write
$f\simeq g$, if $f\preceq g$ and $g\preceq f$.
\end{defn}

The additive linear error term in \cref{def:standard-comparison} is natural for
finite presentations, but it is too coarse for the sublinear examples considered
below.  Indeed, under $\simeq$, every function bounded above by a linear
function is equivalent to $n$; in particular, $1\simeq \log n\simeq n^\alpha
\simeq n$ for $0<\alpha<1$.

\begin{defn}[Fine comparison]\label{def:fine-comparison}
For monotone nondecreasing functions $f,g:\N\to[0,\infty)$, say that $f$ is
\emph{finely dominated} by $g$, and write $f\fpreceq g$, if there is a constant
$C\ge1$ such that
\[
  f(n)\le Cg(Cn+C)+C
\]
for all $n$.  We say that $f$ and $g$ are \emph{finely equivalent}, and write
$f\fsimeq g$, if $f\fpreceq g$ and $g\fpreceq f$.
\end{defn}

The relation $\fpreceq$ allows multiplicative distortion of function values and
linear rescaling of the input, but adds no independent linear error term.
Consequently it distinguishes bounded, logarithmic,
sublinear polynomial, linear, and superlinear growth in the elementary cases
needed in this paper.  All logarithmic and sublinear polynomial comparisons in
this paper are meant with respect to $\fpreceq$ and $\fsimeq$, or equivalently by
the explicit two-sided inequalities displayed in the relevant proofs.  For
superlinear functions such as $n\log n$, the standard and fine comparisons give
the same conclusions in the arguments below.

For group elements $x,y$ we use the convention
\[
  [x,y]=xyx^{-1}y^{-1}.
\]
The standard square grid for $\Z^2$ is oriented so that the generator $a$ points
in the positive horizontal direction and the generator $b$ points in the
positive vertical direction.  A closed word in $a^{\pm1},b^{\pm1}$ is viewed as
a closed edge path in this grid.

\section{The dyadic-strip presentation}\label{sec:dyadic-presentation}

Let
\[
 r_k=[a^{2^k},b]=a^{2^k}ba^{-2^k}b^{-1}\qquad (k\in \Nzero)
\]
and put
\[
  \Pcal=\left\langle a,b\ \middle|\ r_k,\ k\in\Nzero\right\rangle.
\]
Let $G=G(\Pcal)$.  Since $r_0=[a,b]$, $\Pcal$ presents the same group as
$\langle a,b\mid [a,b]\rangle$, namely
\[
  \Z^2=\langle a,b\mid [a,b]\rangle.
\]
Every relator $r_k$ is counted as one $2$-cell, independently of $k$.

For a freely reduced word $w\in \F$ such that $w=_G1$, let
$\Area_{\Pcal}(w)$ denote the minimum number of $2$-cells in a van Kampen diagram
over $\Pcal$ with boundary label $w$.  With the notation of \cref{def:area-dehn}, the Dehn
function of $\Pcal$ is
\[
  \delta_{\Pcal}(n)=\max\{\Area_{\Pcal}(w): |w|_{\{a,b\}}\le n,
  w=_G1,\ w\text{ freely reduced}\}.
\]

\begin{thm}\label{thm:main}
For the presentation
\[
  \Pcal=\left\langle a,b\ \middle|\ [a^{2^k},b],\ k=0,1,2,\ldots\right\rangle
\]
of $\Z^2$, there exist constants $0<c<C$ and $n_0\ge 1$ such that for all
$n\ge n_0$,
\[
  c n\log n\le \delta_{\Pcal}(n)\le C n\log n.
\]
In particular, $\delta_{\Pcal}$ is superlinear and subquadratic.
\end{thm}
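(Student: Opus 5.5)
The proof has two halves: an algebraic upper bound that pushes every $b$-letter to the vertical axis at logarithmic cost per letter, and a lower bound that reads off, row by row, the signed multiplicity with which a van Kampen diagram covers each unit square.

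\textbf{Upper bound.} Let $w$ be a freely reduced null word with $|w|\le n$. Write $w=a^{m_0}b^{\eps_1}a^{m_1}b^{\eps_2}\cdots b^{\eps_s}a^{m_s}$ with $\eps_i=\pm1$ and $s\le n$. Put $x_i=m_0+\dots+m_{i-1}$, the horizontal coordinate of the $i$-th vertical edge, so $|x_i|\le n$. Since the total $a$-exponent is zero, $w$ is freely equal to $\prod_{i=1}^s a^{x_i}b^{\eps_i}a^{-x_i}$. We rewrite each factor using
\[
a^{x}b^{\eps}a^{-x}=[a^{x},b^{\eps}]\,b^{\eps}.
\]
The commutator $[a^x,b^{\pm1}]$ is a product of at most $2(\log_2|x|+1)$ conjugates of relators $r_k^{\pm1}$. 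To see this, expand $|x|$ in binary and apply the identity $[uv,b]=u[v,b]u^{-1}[u,b]$ repeatedly with $u,v$ powers of $a$. Relators $r_k^{\pm1}$ handle $[a^{2^k},b]$ directly, and negative exponents and $b^{-1}$ are reached by conjugation. Moving all commutator factors to the front by conjugation, $w$ becomes freely equal to a product of at most $2s(\log_2 n+1)$ conjugates of relators times $b^{\eps_1}\cdots b^{\eps_s}$. The last word is freely trivial because the total $b$-exponent is zero. Hence $\Area_\Pcal(w)\le Cn\log n$.

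\textbf{Lower bound via row sums.} Let $D$ be any van Kampen diagram over $\Pcal$ with boundary $w$. It maps cellularly to the plane tiled by the standard grid, viewed as the Cayley complex of $\Z^2$. A cell labelled $r_k^{\pm1}$ maps to a horizontal strip of height one and width $2^k$ at some position, with sign $\pm1$. In cellular $1$-chains of the plane, the boundary cycle of $w$ equals the sum of the images of the cell boundaries. Since the plane has no $2$-cycles, the $2$-chains agree, so for every unit square $Q$,
\[
\mathrm{wind}_w(Q)=\sum_{\text{cells }e}\pm[\,Q\subseteq \mathrm{im}(e)\,].
\]
Fix a row $y$ and sum this identity over all squares in that row. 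The left side is the total signed area $A_y$ of $w$ in row $y$. The right side is $\sum \pm 2^{k_e}$, taken over the cells $e$ lying in row $y$. So the number of cells in row $y$ is at least $\nu(A_y)$, the minimal number of terms in a representation of $A_y$ as a sum of signed powers of $2$. This $\nu$ is the weight of the non-adjacent form.

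\textbf{Test words.} Take $m$ whose binary expansion is $1010\cdots101$, so that $\nu(m)\ge c_0\log m$, and take $w_m=a^m b^m a^{-m}b^{-m}$ of length $4m$. Each of its $m$ rows has $A_y=\pm m$, so $\Area(w_m)\ge c_0 m\log m$. Interpolating between admissible values of $m$, which occur at every other bit length, gives $\delta_\Pcal(n)\ge c n\log n$ for all large $n$, using monotonicity of $\delta_\Pcal$.

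The main obstacle is the lower bound. The first delicate point is justifying the chain-level identity for possibly singular, nonplanar-immersed diagrams. Cellular homology of the plane settles this, provided cells are mapped with their orientations as above. The second is the arithmetic estimate: $\nu(m)$ equals the NAF weight, and the NAF weight of $1010\cdots1$ is about $\tfrac12\log_2 m$. For a self-contained argument I would prove directly that any signed-power representation of such $m$ needs about $\tfrac13\log_2 m$ terms, by induction on bits modulo $4$.
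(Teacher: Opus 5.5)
Your proof is correct. The lower bound is essentially the paper's. The paper also projects an arbitrary diagram to a cellular $2$-chain and uses the absence of $2$-cycles in the grid to identify that chain with the square. It then takes row sums and uses exactly your test words $[a^{L},b^{L}]$ with $L=(4^t-1)/3$, whose binary expansion is $10\cdots101$. The only difference there is the arithmetic input. You cite minimality of the non-adjacent form, together with the easy merging step showing that repeated powers never help. The paper instead proves the exact value $t$ directly from the recurrences $\tau(2m)=\tau(m)$ and $\tau(2m+1)=1+\min\{\tau(m),\tau(m+1)\}$. That is the self-contained argument you were reaching for with your induction modulo $4$.

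The upper bound is where you genuinely differ. The paper argues geometrically. It cuts the closed walk at repeated vertices into simple lattice loops of total length at most $n$. It then slices each enclosed polygon into unit-height row intervals; there are at most $p/2$ of them, because entry and exit vertical edges alternate. Each interval is tiled by the dyadic pieces of its binary expansion, giving $\delta_{\Pcal}(n)\le \frac n2(\lfloor\log_2 n\rfloor+1)$.

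Your identity $w=\prod_i a^{x_i}b^{\eps_i}a^{-x_i}$ instead attaches a signed strip running from the vertical axis to each vertical edge. This is the diagrammatic counterpart of the paper's later formula $\Acal(w)=\sum_e\epsilon_e x_e$, and the commutator expansion fills each strip with at most $\lfloor\log_2|x_i|\rfloor+1$ cells. Your route avoids all planar topology: no Jordan-type facts about simple lattice polygons, no gluing of a tiling into a van Kampen diagram, and no loop-cutting lemma. It also produces the product of conjugates explicitly. The price is a constant roughly twice as large, because your strips overlap with cancelling signs rather than tiling the region. Your factor $2$ in $2(\log_2|x|+1)$ is unnecessary: the identity $[uv,b]=u[v,b]u^{-1}[u,b]$ already gives one conjugate per binary digit.
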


\begin{rem}
This theorem is a statement about the Dehn function of the specified infinite
presentation.  It is not in conflict with the usual finite-presentation theory,
where subquadratic Dehn function forces linear Dehn function.  The relators here
have unbounded length, and long relators are counted with area one.
\end{rem}

\subsection{The geometric model}

We use the standard square tiling of $\R^2$, with vertices $\Z^2$.  The letter
$a$ is interpreted as the unit horizontal step $(x,y)\mapsto (x+1,y)$, and the
letter $b$ as the unit vertical step $(x,y)\mapsto (x,y+1)$.  Thus every word
$w\in\F$ gives a lattice path in this grid, starting at the origin.

The relator
\[
 r_k=a^{2^k}ba^{-2^k}b^{-1}
\]
is the boundary of a horizontal rectangle of width $2^k$ and height $1$.
More precisely, a translate of an $r_k$-cell with lower-left corner $(x,y)$
contributes the cellular $2$-chain
\[
  \sum_{i=0}^{2^k-1} s_{x+i,y},
\]
where $s_{u,v}$ denotes the positively oriented unit square
$[u,u+1]\times[v,v+1]$.  An oppositely oriented $r_k$-cell contributes the
negative of this chain.

\begin{figure}[htbp]
\centering
\begin{tikzpicture}[scale=.55,>=Latex]
  \draw[step=1,gray!45,very thin] (-.2,-.2) grid (8.2,2.2);
  \fill[gray!18] (0,0) rectangle (8,1);
  \draw[very thick] (0,0) rectangle (8,1);
  \foreach \x in {1,...,7} {\draw[gray!55] (\x,0) -- (\x,1);}
  \draw[->,thick] (0,-.25) -- (8,-.25) node[midway,below] {$a^{2^k}$};
  \draw[->,thick] (8.25,0) -- (8.25,1) node[midway,right] {$b$};
  \draw[->,thick] (8,1.25) -- (0,1.25) node[midway,above] {$a^{-2^k}$};
  \draw[->,thick] (-.25,1) -- (-.25,0) node[midway,left] {$b^{-1}$};
  \node at (4,.5) {$2^k\times 1$};
  \node[below] at (4,-1.05) {one relator cell for $[a^{2^k},b]$};
\end{tikzpicture}
\caption{A relator $[a^{2^k},b]$ fills one horizontal dyadic strip of width $2^k$ and height $1$.}
\label{fig:dyadic-strip-cell}
\end{figure}

\begin{lem}[Projected $2$-chains]\label{lem:projected-chain}
Let $\mathcal D$ be a van Kampen diagram over $\Pcal$ with boundary label $w$.  Then $\mathcal D$
determines a finite cellular $2$-chain $C_{\mathcal D}$ in the standard square tiling of
$\R^2$ such that
\[
  \partial C_{\mathcal D}=\gamma_w,
\]
where $\gamma_w$ is the cellular $1$-chain traced by the lattice path labelled
by $w$.  Each $2$-cell of type $r_k^{\pm1}$ contributes, with sign, a translate
of a horizontal $2^k\times 1$ rectangle.
\end{lem}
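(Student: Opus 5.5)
We will construct the chain $C_{\mathcal D}$ by mapping the diagram into the plane. Since $G=\Z^2$, there is a canonical label-preserving cellular map $\pi$ from the $1$-skeleton of $\mathcal D$ to the standard square grid. Choose a base vertex $v_0$ of $\mathcal D$ on the boundary and send it to the origin. Every other vertex $v$ is sent to the image in $\Z^2$ of the label of any edge path from $v_0$ to $v$. This is well defined because $\mathcal D$ is simply connected, so any two such paths differ by a product of conjugates of boundary labels of $2$-cells, and each relator $r_k$ is trivial in $\Z^2$. Each edge labelled $a^{\pm1}$ or $b^{\pm1}$ then maps to a unit horizontal or vertical oriented edge of the grid. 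The boundary cycle of $\mathcal D$, read from $v_0$, maps onto the lattice path $\gamma_w$.

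Next we define the chain. Each $2$-cell $\Pi$ of $\mathcal D$ carries label $r_k^{\pm1}$ read from some vertex $u$ of $\partial\Pi$. Choose the reading start so that the word read is $a^{2^k}ba^{-2^k}b^{-1}$ with the appropriate orientation. Then $\pi(\partial\Pi)$ is the boundary of the rectangle $[x,x+2^k]\times[y,y+1]$, where $(x,y)=\pi(u)$. We set
\[
C_\Pi=\pm\sum_{i=0}^{2^k-1}s_{x+i,y},
\]
where the sign is $+$ if $\partial\Pi$, traversed in the orientation induced from a fixed orientation of the disc, reads $r_k$, and $-$ if it reads $r_k^{-1}$. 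By direct computation, $\partial C_\Pi=\pi_*(\partial\Pi)$ as cellular $1$-chains; the internal vertical edges of the strip cancel in pairs. Finally we set $C_{\mathcal D}=\sum_\Pi C_\Pi$.

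The key identity is $\sum_\Pi \partial\Pi=\partial\mathcal D$ as oriented $1$-chains in $\mathcal D$. This holds because every interior edge lies on exactly two $2$-cells (or on none, in degenerate tree-like parts) and appears with opposite orientations. Every edge of $\partial\mathcal D$, counted with multiplicity along the boundary cycle, accounts for the remaining terms. Edges in one-dimensional parts of the diagram, such as whiskers, are traversed twice in opposite directions and cancel. Applying the chain map $\pi_*$ gives
\[
\partial C_{\mathcal D}=\sum_\Pi\pi_*(\partial\Pi)=\pi_*(\partial\mathcal D)=\gamma_w.
\]
If $\mathcal D$ is singular, we apply the argument to each disc component and to the tree parts separately.

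The main obstacle is the bookkeeping of orientations and signs, especially in singular diagrams. The cleanest way to handle it is to view $\mathcal D$ as a cellular map from a disc $D^2$, or as a $2$-complex whose cellular chain boundary equals the boundary cycle. Then $\pi$ is a cellular map to the plane's square complex after subdividing each $r_k$-cell into $2^k$ unit squares, which is compatible because $\pi$ maps its boundary as described. With that setup, $C_{\mathcal D}$ is simply the pushforward $\pi_*[D^2]$, and the boundary formula follows from $\partial\pi_*=\pi_*\partial$.
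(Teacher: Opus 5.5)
Your proposal is correct and follows the same route as the paper. You map vertices to $\Z^2$ via path labels, which is well defined by simple connectivity and triviality of the $r_k$ in $\Z^2$; you replace each $r_k^{\pm1}$-cell by the signed sum of the $2^k$ unit squares of its projected rectangle; and you use cancellation of interior edges to get $\partial C_{\mathcal D}=\gamma_w$. Your extra orientation bookkeeping, namely the identity $\sum_\Pi\partial\Pi=\partial\mathcal D$ with whiskers cancelling, makes explicit what the paper compresses into one sentence.
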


\begin{proof}
Choose a base vertex of $\mathcal D$ mapping to $(0,0)\in\Z^2$.  To every vertex $v$ of
$\mathcal D$ assign the exponent-sum vector of the label of any path from the base vertex
to $v$.  This definition is well-defined because the boundary word of each relator $r_k$
has total exponent sum zero in both $a$ and $b$, and $\mathcal D$ is simply connected.

Now consider a $2$-cell $\Pi$ of $\mathcal D$.  Its boundary label is a cyclic conjugate
of $r_k$ or $r_k^{-1}$ for some $k$.  The chosen starting point on the boundary
of the cell only translates the resulting rectangle: under the vertex map just
defined, the boundary of $\Pi$ is sent to the boundary of a translate of the
horizontal rectangle $[0,2^k]\times[0,1]$, with one of the two orientations.
Replace $\Pi$ by the corresponding signed sum of $2^k$ unit squares.  Summing
over all $2$-cells of $\mathcal D$ gives a finite cellular $2$-chain $C_{\mathcal D}$.

The boundary of this sum is the sum of the projected boundaries of the $2$-cells.
Projected interior edges of $\mathcal D$ occur twice with opposite orientations and cancel.
The only remaining boundary is the projected outer boundary, namely
$\gamma_w$.  Hence $\partial C_{\mathcal D}=\gamma_w$.
\end{proof}

\begin{lem}[No finite $2$-cycles in the grid]\label{lem:no-cycles}
Let
\[
 C=\sum_{(x,y)\in\Z^2} c_{x,y}s_{x,y}
\]
be a finite cellular $2$-chain in the standard square tiling of $\R^2$, with
$c_{x,y}\in\Z$.  If $\partial C=0$, then $C=0$.
\end{lem}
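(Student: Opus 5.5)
The plan is to read off the coefficients $c_{x,y}$ directly from the boundary condition $\partial C=0$, one unit edge at a time. Since $C$ is a finite sum, only finitely many coefficients $c_{x,y}$ are nonzero. We extend $c$ by zero to all of $\Z^2$.

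First, compute the coefficient of $\partial C$ on a single edge. Consider the vertical unit edge $e$ from $(x,y)$ to $(x,y+1)$. Exactly two unit squares contain $e$: the square $s_{x-1,y}$ to its left and the square $s_{x,y}$ to its right. With the positive (counterclockwise) orientation, $\partial s_{x,y}$ traverses $e$ downward, while $\partial s_{x-1,y}$ traverses $e$ upward. Hence the coefficient of $e$, oriented upward, in $\partial C$ is
\[
  c_{x-1,y}-c_{x,y}.
\]

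Second, use the hypothesis. The condition $\partial C=0$ gives $c_{x-1,y}=c_{x,y}$ for all $(x,y)\in\Z^2$. So each row $y\mapsto (c_{x,y})_{x\in\Z}$ is constant in $x$. A finitely supported function that is constant along each row must vanish on that row: for fixed $y$, choose $x$ large enough that $c_{x,y}=0$, and constancy then forces every entry in the row to be zero. Hence $C=0$.

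We could alternatively use the horizontal edges, which give constancy in $y$ along each column; either family of edges suffices on its own. The only point requiring care is keeping the orientation signs consistent, and even that is harmless, since the argument needs only that each interior edge receives opposite contributions from its two adjacent squares. An equivalent topological justification is that $H_2(\R^2)=0$ and there are no $3$-cells, so the cellular $2$-cycles are exactly zero. The elementary argument above is self-contained, however, and that is the one I would write.
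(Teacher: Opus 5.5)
Your proof is correct and is essentially the paper's argument: both read off the coefficients of $\partial C$ on vertical unit edges and then use that $C$ has finite support. The paper phrases it extremally, taking a nonzero square $s_{x_0,y_0}$ with $x_0$ maximal, whose right vertical edge then has coefficient $\pm c_{x_0,y_0}\neq 0$ in $\partial C$; this is your relation $c_{x-1,y}=c_{x,y}$ applied at that single edge.
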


\begin{proof}
Suppose $C\ne0$.  Choose $x_0$ maximal such that $c_{x_0,y}\ne0$ for some
$y\in\Z$, and choose such a $y_0$.  The right vertical edge of the square
$s_{x_0,y_0}$ occurs in $\partial C$ with coefficient $c_{x_0,y_0}$, since no
square immediately to its right has nonzero coefficient.  Thus $\partial C\ne0$,
a contradiction.  Hence $C=0$.
\end{proof}

\begin{cor}\label{cor:square-chain}
Let $Q_L$ be the positively oriented $L\times L$ square cellular $2$-chain
\[
  Q_L=\sum_{x=0}^{L-1}\sum_{y=0}^{L-1}s_{x,y}.
\]
If $\mathcal D$ is a van Kampen diagram over $\Pcal$ whose boundary path is the positively
oriented boundary of $Q_L$, then $C_{\mathcal D}=Q_L$.
\end{cor}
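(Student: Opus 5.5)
The plan is to compare two $2$-chains with the same boundary and then apply \cref{lem:no-cycles} to their difference.

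First, \cref{lem:projected-chain} applies to $\mathcal D$. It produces a finite cellular $2$-chain $C_{\mathcal D}$ with $\partial C_{\mathcal D}=\gamma_w$, where $w$ is the boundary label of $\mathcal D$. By hypothesis the boundary path of $\mathcal D$ is the positively oriented boundary of $Q_L$. So, with the base vertex placed at the appropriate corner (for instance the origin, reading $w=a^Lb^La^{-L}b^{-L}$), the traced $1$-chain $\gamma_w$ is exactly $\partial Q_L$.

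Second, I would check directly that $\partial Q_L$ equals this boundary $1$-chain. Every interior edge of the $L\times L$ block is shared by two adjacent unit squares, with opposite induced orientations, so those contributions cancel. The edges that survive are the $4L$ outer edges, each oriented counterclockwise.

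Third, put $E=C_{\mathcal D}-Q_L$. This is a finite cellular $2$-chain with $\partial E=\gamma_w-\partial Q_L=0$. By \cref{lem:no-cycles}, $E=0$, and hence $C_{\mathcal D}=Q_L$.

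The only point needing care is the normalization of the basepoint. If the boundary path starts at a different corner of the square, or if the base vertex of $\mathcal D$ is chosen elsewhere, then $C_{\mathcal D}$ and $\gamma_w$ are both shifted by the same lattice translation. The positively oriented boundary of $Q_L$ is still traced, with the same orientation, so the argument goes through unchanged. I do not expect a genuine obstacle; the substance is entirely in \cref{lem:projected-chain} and \cref{lem:no-cycles}.
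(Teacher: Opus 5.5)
Your proposal is correct and matches the paper's proof: \cref{lem:projected-chain} gives $\partial C_{\mathcal D}=\partial Q_L$, and \cref{lem:no-cycles} then forces the cycle $C_{\mathcal D}-Q_L$ to vanish. Your extra checks are harmless additions: the explicit computation of $\partial Q_L$ is fine, and the basepoint remark only says that a different normalization replaces $Q_L$ by a lattice translate.
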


\begin{proof}
By \cref{lem:projected-chain}, $\partial C_{\mathcal D}=\partial Q_L$.  Therefore
$C_{\mathcal D}-Q_L$ is a finite cellular $2$-cycle in the grid.  By \cref{lem:no-cycles},
$C_{\mathcal D}-Q_L=0$.
\end{proof}

\subsection{The upper bound}\label{sec:upper-bound}

We first fill simple lattice loops by slicing their interiors into horizontal
unit-height strips.  The schematic picture in \cref{fig:row-decomposition}
shows the operation used in the proof: each horizontal row is a disjoint union
of integer intervals, and each such interval is decomposed into dyadic pieces.

\begin{figure}[htbp]
\centering
\begin{tikzpicture}[scale=.45]
  \draw[step=1,gray!45,very thin] (-.2,-.2) grid (14.2,5.2);
  \fill[gray!15] (1,0) rectangle (10,1);
  \fill[gray!15] (0,1) rectangle (13,2);
  \fill[gray!15] (2,2) rectangle (12,3);
  \fill[gray!15] (3,3) rectangle (9,4);
  \fill[gray!15] (5,4) rectangle (8,5);
  \draw[very thick] (1,0) -- (10,0) -- (10,1) -- (13,1) -- (13,2) -- (12,2) -- (12,3) -- (9,3) -- (9,4) -- (8,4) -- (8,5) -- (5,5) -- (5,4) -- (3,4) -- (3,3) -- (2,3) -- (2,2) -- (0,2) -- (0,1) -- (1,1) -- cycle;
  \draw[very thick] (0,1) -- (8,1);
  \draw[very thick,dashed] (8,1) -- (12,1);
  \draw[very thick,dotted] (12,1) -- (13,1);
  \draw[|-|] (0,-.65) -- (8,-.65) node[midway,below] {$8$};
  \draw[|-|] (8,-.65) -- (12,-.65) node[midway,below] {$4$};
  \draw[|-|] (12,-.65) -- (13,-.65) node[midway,below] {$1$};
  \node[right] at (13.3,1.5) {$13=8+4+1$};
\end{tikzpicture}
\caption{Horizontal slicing of a lattice region.  A row interval of length $13$ is filled by dyadic strips of lengths $8$, $4$, and $1$.}
\label{fig:row-decomposition}
\end{figure}

\begin{lem}[Filling a simple loop]\label{lem:simple-loop-upper}
Let $\gamma$ be a simple closed lattice path of length $p\ge4$.  Then $\gamma$
has a filling over $\Pcal$ with at most
\[
  \frac p2\bigl(\lfloor \log_2 p\rfloor+1\bigr)
\]
$2$-cells.
\end{lem}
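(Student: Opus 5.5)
The plan is to decompose the bounded region enclosed by $\gamma$ into horizontal unit-height rows, split each row into maximal integer intervals, and fill each interval with dyadic strips according to its binary expansion. Concretely, $\gamma$ is a simple closed curve, so by the Jordan curve theorem its interior $\Omega$ is a union of finitely many closed unit squares $s_{x,y}$. For each $y$, the squares of $\Omega$ at height $[y,y+1]$ form a disjoint union of maximal runs $\{x,x+1,\dots,x+\ell-1\}$. Each run $I$ is a $\ell\times 1$ rectangle. Write $\ell=\sum_{j\in J}2^{j}$ in binary; cutting the rectangle at the partial sums gives $|J|$ consecutive rectangles of widths $2^j$, each the boundary of one $r_j$-cell. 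Gluing these gives a van Kampen diagram for the rectangle boundary $a^\ell b a^{-\ell}b^{-1}$ with $|J|\le\lfloor\log_2\ell\rfloor+1$ cells, as in \cref{fig:row-decomposition}.

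Next I assemble these pieces into a planar diagram for $\gamma$. The natural way is to view the union of all run-rectangles, with the vertical cut segments added, as a planar cell decomposition of the closed disk $\overline\Omega$. Its $2$-cells are exactly the dyadic strips, each labelled by a cyclic conjugate of some $r_k^{\pm1}$, and its boundary is $\gamma$. Since $\overline\Omega$ is a closed disk, because $\gamma$ is simple, this is a genuine van Kampen diagram. Here the $1$-skeleton consists of the boundary edges, the horizontal edges between consecutive rows, and the vertical cut edges; horizontal edges shared by two rows are shared by strips in both rows, and labels are consistent because they are grid edges.

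The counting goes as follows. Each maximal run at height $y$ has two vertical end edges, its left and right sides, which are edges of $\gamma$: the square just outside the run is not in $\Omega$, so the edge separates interior from exterior. Distinct runs use distinct such vertical edges. Hence the number of runs $m$ satisfies $2m\le v$, where $v\le p$ is the number of vertical edges of $\gamma$, so $m\le p/2$. Each run has length $\ell\le p$, since its two horizontal sides lie in $\Omega$ and the projection of $\gamma$ to the $x$-axis has length at most $p/2$; in fact $\ell\le p/2$. Therefore each run contributes at most $\lfloor\log_2 p\rfloor+1$ cells, and the total is at most $\frac p2(\lfloor\log_2p\rfloor+1)$.

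The main obstacle is the rigorous claim that the decomposition is a van Kampen diagram, i.e. that the tiling of the closed disk by strips, with its induced $1$-skeleton, yields a diagram whose boundary label read from a base vertex is exactly $w$. I would handle this by noting that a planar cell structure on a disk with boundary cycle $\gamma$ is by definition a van Kampen diagram once each face carries a relator label. Alternatively, I would induct on the number of rows: peel off the top row of runs, which attaches along segments, and use that simplicity keeps each intermediate region a disk or a union of disks joined appropriately. I expect the direct planar-disk argument to suffice. The hypothesis $p\ge4$ only ensures $\log_2 p\ge 2$ and excludes degenerate cases.
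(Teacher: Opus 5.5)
Your proposal is correct and follows essentially the same route as the paper. You slice the closed disk bounded by $\gamma$ into unit-height rows, bound the number of maximal row intervals by half the number of vertical edges of $\gamma$, fill each interval with dyadic strips according to its binary expansion, and use that the closed region is a disk to get a genuine van Kampen diagram. The differences are cosmetic: you bound the number of runs by noting that their end edges are distinct edges of $\gamma$, where the paper uses the entry/exit alternation, and you explicitly justify $\ell\le p/2$, which the paper uses implicitly in the weaker form $\ell\le p$.
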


\begin{proof}
Let $\Omega$ be the compact region bounded by $\gamma$.  Since $\gamma$ is a
simple closed lattice path, $\Omega$ is a simply connected lattice polygon.
For each integer $y$, consider the open horizontal strip
\[
  S_y^\circ=\R\times(y,y+1).
\]
The intersection $\operatorname{int}(\Omega)\cap S_y^\circ$ is a finite disjoint union of open
rectangles
\[
  (u,v)\times(y,y+1),\qquad u,v\in\Z,
\]
with positive integer lengths $\ell=v-u$.  We fill the closures of these
rectangles.

For a fixed strip $S_y^\circ$, the endpoints of these intervals occur exactly at the
vertical boundary edges of $\gamma$ crossing that strip.  As one moves across the
strip from left to right, these vertical boundary edges alternate between entry
and exit edges.  Therefore the number of interval components in
$\operatorname{int}(\Omega)\cap S_y^\circ$ is half the number of vertical boundary edges of
$\gamma$ in that strip.  Summing over all $y$, the total number of interval
components is half the total number of vertical edges of $\gamma$, and hence is
at most $p/2$.

Now take one interval component $(u,v)\times(y,y+1)$ of length $\ell=v-u$.
Write $\ell$ in binary,
\[
  \ell=2^{k_1}+\cdots+2^{k_m},
\]
where the $2^{k_i}$ are the distinct powers of two appearing in the binary
expansion of $\ell$.  Then
\[
  m\le \lfloor \log_2 \ell\rfloor+1\le \lfloor \log_2 p\rfloor+1.
\]
Partition the closed interval $[u,v]$ consecutively into subintervals of lengths
$2^{k_1},\ldots,2^{k_m}$.  Each closed subrectangle of size $2^{k_i}\times1$ is filled
by one translate of the relator cell $r_{k_i}$.

Carrying out this construction for every horizontal interval component tiles $\Omega$ by horizontal
dyadic rectangles.  Since $\Omega$ is a topological disk, the resulting tiling gives a van
Kampen diagram with boundary label $\gamma$ after subdividing horizontal sides
into unit $a$-edges: interior horizontal and vertical subdivision edges are glued
in inverse-oriented pairs, while the remaining boundary is the original lattice
loop.  We use inverse relator cells if the orientation of $\gamma$ is opposite to
the positive boundary orientation of $\Omega$.  The number of cells is at most
\[
  \frac p2\bigl(\lfloor \log_2 p\rfloor+1\bigr),
\]
as claimed.
\end{proof}

\begin{lem}[Cutting a closed walk into simple loops]\label{lem:cutting}
Let $\gamma$ be a closed lattice walk of length $n$.  Then, for purposes of
filling, $\gamma$ can be cut into finitely many simple closed lattice paths
$\gamma_1,\ldots,\gamma_s$ whose lengths $p_i=|\gamma_i|$ satisfy
\[
  \sum_{i=1}^s p_i\le n.
\]
More precisely, if each $\gamma_i$ has a filling of area $A_i$, then $\gamma$ has
a filling of area at most $\sum_i A_i$.
\end{lem}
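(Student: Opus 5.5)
The argument is an induction on $n$, the length of the closed walk. At each step we split off one simple loop at a first repeated vertex and fill the walk by gluing diagrams along a common vertex.

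Write $\gamma$ as the cyclic sequence of lattice vertices $v_0,v_1,\dots,v_n=v_0$. If $\gamma$ is freely reduced but its vertices are not all distinct apart from $v_0=v_n$, choose indices $i<j$ with $v_i=v_j$, $(i,j)\neq(0,n)$, and $j-i$ minimal among such pairs. Then the subpath $v_i,\dots,v_j$ visits no vertex twice except its endpoints. Let $\beta$ be the corresponding subword; it labels a closed path of length $j-i$.

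We handle the degenerate case first. If $j-i\le 2$, then $j-i=2$, because $j-i=1$ is impossible for a lattice step. In that case $\beta$ is a backtrack $xx^{-1}$, which would contradict free reduction of $w$, or a cyclic backtrack at the basepoint. Such a backtrack can simply be deleted at zero area cost, since van Kampen diagrams allow trivial free reductions, and this shortens the walk. Otherwise $\beta$ is a simple closed lattice path of length $\ge4$.

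Now decompose the word as $w=w_1\beta w_2$, where $w_1$ and $w_2$ are the labels of $v_0\to v_i$ and $v_j\to v_n$. Then $w_1w_2$ is a closed walk of length $n-|\beta|$, and $w$ is freely equal to $(w_1\beta w_1^{-1})(w_1w_2)$. Given a van Kampen diagram for $\beta$ with $A$ cells and one for $w_1w_2$ with $A'$ cells, glue the first at the vertex reached after reading $w_1$ in the boundary of the second. This produces a (possibly singular) van Kampen diagram with boundary label $w$ and $A+A'$ cells, and such diagrams are permitted by \cref{def:area-dehn}. In the algebraic form, $\beta$ is a product of $A$ conjugates of relators, so $w_1\beta w_1^{-1}$ is a product of $A$ conjugates as well.

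Induction on $n$ applied to $w_1w_2$, with the possibly non-reduced word freely reduced first at no cost, yields simple loops $\gamma_2,\dots,\gamma_s$ with $\sum_{i\ge2}p_i\le n-|\beta|$. Setting $\gamma_1=\beta$ gives $\sum_i p_i\le n$ and total area at most $\sum_i A_i$. The base case is that either the walk is already simple, so $s=1$, or it reduces to the empty walk, so $s=0$ and the area is $0$.

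The main obstacle is the bookkeeping of degenerate pieces: backtracks that arise after splitting, and the whisker segments produced by free reduction. The point to keep in view is that these only shorten lengths and cost no cells. I would handle them by always freely and cyclically reducing before extracting a loop, and by invoking \cref{rem:one-cell-whisker}-style whisker gluing, so that only genuine simple loops of length $\ge4$ ever receive cells.
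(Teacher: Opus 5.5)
Your proof is correct and follows essentially the paper's argument: induct on length, split the walk at a repeated vertex into two closed walks of complementary lengths, and glue their fillings at that vertex so that the areas add. The differences are minor refinements. You choose a repeat with minimal $j-i$, so the split-off loop is already simple and only the complement needs induction, and you handle backtracks explicitly by free reduction. The paper instead recurses on both pieces and only mentions discarding length-zero loops.
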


\begin{proof}
We argue by induction on $n$.  If the closed walk has no repeated vertices except
for the equality of its initial and terminal vertices, then it is simple and there
is nothing to prove.

Otherwise, choose two distinct times at which $\gamma$ visits the same vertex.
The segment of $\gamma$ between those two visits is a closed lattice walk
$\beta$, and the complementary part is another closed lattice walk $\alpha$.
Their lengths add to $n$.  A filling of $\beta$ can be attached at the repeated
vertex to a filling of $\alpha$, producing a filling of $\gamma$, with areas
adding.  Applying the induction hypothesis to $\alpha$ and $\beta$, and discarding
any trivial length-zero loops that arise, gives the required simple loops.  The
sum of their lengths is at most $n$.
\end{proof}

\begin{prop}[Upper bound]\label{prop:upper}
For every $n\ge2$,
\[
  \delta_{\Pcal}(n)\le \frac n2\bigl(\lfloor\log_2 n\rfloor+1\bigr).
\]
In particular, $\delta_{\Pcal}(n)=O(n\log n)$.
\end{prop}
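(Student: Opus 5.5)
The plan is to combine \cref{lem:cutting} with \cref{lem:simple-loop-upper} and then use superadditivity of the function $p\mapsto \frac p2(\lfloor\log_2 p\rfloor+1)$.

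Let $w$ be a freely reduced null word with $|w|_{\{a,b\}}\le n$, and let $\gamma$ be the closed lattice walk it traces, which has length $|w|\le n$. By \cref{lem:cutting}, $\gamma$ splits into simple closed lattice paths $\gamma_1,\ldots,\gamma_s$ with lengths $p_i$ satisfying $\sum p_i\le |w|\le n$, and $\Area_{\Pcal}(w)$ is at most the sum of the areas of fillings of the $\gamma_i$. Every simple closed lattice path in the square grid has length at least $4$, so each $p_i\ge 4$. Then \cref{lem:simple-loop-upper} applies and gives
\[
  \Area_{\Pcal}(w)\le \sum_{i=1}^s \frac{p_i}{2}\bigl(\lfloor\log_2 p_i\rfloor+1\bigr)
  \le \sum_{i=1}^s \frac{p_i}{2}\bigl(\lfloor\log_2 n\rfloor+1\bigr)
  \le \frac n2\bigl(\lfloor\log_2 n\rfloor+1\bigr).
\]
Here the second inequality uses $p_i\le n$ together with monotonicity of $\lfloor\log_2\cdot\rfloor$, and the third uses $\sum p_i\le n$. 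If $w$ is empty, its area is $0$ and the bound holds trivially. Taking the maximum over all such $w$ gives the stated bound on $\delta_{\Pcal}(n)$, and the $O(n\log n)$ conclusion follows immediately.

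The only point needing care is the interface between words and walks. When $w$ is read as a lattice path it may revisit vertices, and the cutting in \cref{lem:cutting} happens at the level of the walk rather than the word. The lemma already asserts that fillings of the pieces assemble into a filling of $\gamma$, hence of $w$, by gluing at the repeated vertices; possibly a piece is reached by a whisker, as in \cref{rem:one-cell-whisker}. I therefore expect no genuine obstacle: the proof is a direct bookkeeping combination of the two lemmas, and the hypothesis $n\ge2$ only guarantees that $\log_2 n$ is meaningful and the bound nontrivial.
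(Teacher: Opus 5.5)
Your proposal is correct and is essentially the paper's own proof: cut the walk with \cref{lem:cutting}, fill each piece via \cref{lem:simple-loop-upper}, bound $\lfloor\log_2 p_i\rfloor\le\lfloor\log_2 n\rfloor$ termwise, and sum using $\sum_i p_i\le n$. The superadditivity you announce at the start is never actually used.

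One small precision. The cutting can produce degenerate backtracking pieces of length $2$. For example, $w=abab^{-1}a^{-1}a^{-1}$ cut at the vertex $(1,0)$ leaves the piece $aa^{-1}$, which is not a Jordan curve. So instead of asserting $p_i\ge4$ for every piece, observe that such pieces have area $0$ and discard them. The remaining pieces are genuine lattice cycles of length at least $4$.
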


\begin{proof}
Let $w$ be a freely reduced word over $\{a^{\pm1},b^{\pm1}\}$ with $w=_G1$ and $|w|_{\{a,b\}}\le n$.  The word
$w$ traces a closed lattice walk of length at most $n$.  By \cref{lem:cutting},
it suffices to fill simple closed lattice paths $\gamma_1,\ldots,\gamma_s$ with
lengths $p_1,\ldots,p_s$ satisfying $\sum_i p_i\le n$.

By \cref{lem:simple-loop-upper}, the total area is at most
\[
  \sum_{i=1}^s \frac{p_i}{2}\bigl(\lfloor\log_2 p_i\rfloor+1\bigr)
  \le
  \sum_{i=1}^s \frac{p_i}{2}\bigl(\lfloor\log_2 n\rfloor+1\bigr)
  \le
  \frac n2\bigl(\lfloor\log_2 n\rfloor+1\bigr).
\]
Thus $\Area_{\Pcal}(w)$ is at most the displayed quantity.  Taking the maximum
over all freely reduced words $w$ with $w=_G1$ and $|w|_{\{a,b\}}\le n$ gives
the result.
\end{proof}

\subsection{Signed binary weight}

The lower bound uses a small amount of elementary arithmetic.

\begin{defn}\label{def:signed-weight}
For an integer $n\ge0$, the \emph{signed binary weight} $\tau(n)$ is the least integer $r\ge0$ such
that
\[
  n=\sum_{i=1}^r \eps_i2^{k_i},
  \qquad \eps_i\in\{\pm1\},\quad k_i\in\Nzero.
\]
Thus $\tau(n)$ is the minimal number of signed powers of two needed to express
$n$.  We put $\tau(0)=0$.
\end{defn}

\begin{lem}[Recurrences for signed binary weight]\label{lem:tau-recurrence}
For every $m\ge0$,
\[
  \tau(2m)=\tau(m)
\]
and
\[
  \tau(2m+1)=1+\min\{\tau(m),\tau(m+1)\}.
\]
\end{lem}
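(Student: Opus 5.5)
We prove both recurrences by comparing optimal signed-binary representations of $2m$, $m$, $2m+1$, $m+1$. Two elementary moves are available. Doubling or halving every term of a representation preserves its number of terms. Adding or removing a single $\pm1=\pm2^0$ changes the number of terms by one. We may also assume an optimal representation never uses the same power $2^k$ twice. Two equal-sign copies merge into $\pm2^{k+1}$, and two opposite-sign copies cancel; either way the count strictly drops, so an optimal representation has no such pair. In particular, an optimal representation contains at most one term equal to $\pm1$.

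For the first identity, the bound $\tau(2m)\le\tau(m)$ is immediate: double every term of an optimal representation of $m$. For the reverse bound, take an optimal representation of $2m$ with $r=\tau(2m)$ terms. By the remark above, at most one term is $\pm1$. Reducing modulo $2$, the number of $\pm1$ terms is even, since $2m$ is even. Hence no term is $\pm1$, every term is a power $2^k$ with $k\ge1$, and halving gives a representation of $m$ with $r$ terms. The case $m=0$ is trivial, because $\tau(0)=0$ and the empty sum works.

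For the second identity, the upper bound comes from two constructions. Write $2m+1=2m+1$ and $2m+1=2(m+1)-1$. Doubling an optimal representation of $m$, respectively of $m+1$, and appending $+1$, respectively $-1$, gives $\tau(2m+1)\le1+\min\{\tau(m),\tau(m+1)\}$.

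The lower bound is the only point needing care. Take an optimal representation of the odd number $2m+1$. The number of $\pm1$ terms is odd, so by the no-repetition remark it is exactly one. If that term is $+1$, the remaining $r-1$ terms are all even and sum to $2m$; halving them gives $\tau(m)\le r-1$. If that term is $-1$, the remaining terms sum to $2m+2$; halving gives $\tau(m+1)\le r-1$. Either way $\min\{\tau(m),\tau(m+1)\}\le\tau(2m+1)-1$.

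The main subtlety is justifying the no-repetition normalization. We argue it once at the start: if an optimal representation had a repeated power, merging or cancelling that pair would produce a strictly shorter representation, contradicting minimality. With this normalization in place, the parity count forces exactly zero or one unit term, and both recurrences follow.
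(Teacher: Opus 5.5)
Your proof is correct and follows essentially the paper's argument: both combine pairs of $\pm1$ terms ($(1,1)\to2$, $(-1,-1)\to-2$, $(1,-1)\to0$) and then use parity to isolate zero or one unit term before halving. The only difference is cosmetic: the paper rewrites a shortest expression without increasing its length, whereas you use the strict decrease to show that a shortest expression has at most one unit term to begin with.
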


\begin{proof}
The inequality $\tau(2m)\le\tau(m)$ follows by doubling a shortest signed binary
expression for $m$.
Conversely, take a shortest expression for $2m$.  Since $2m$ is even, the number
of terms equal to $\pm1$ is even.  Pair these $\pm1$ terms arbitrarily.  Each
pair is either $(1,1)$, $(-1,-1)$, or $(1,-1)$, and may be replaced respectively
by $2$, $-2$, or $0$, without increasing the number of terms.  Hence $2m$ has an
expression of length at most $\tau(2m)$ in which every term is divisible by $2$.
Dividing by $2$ gives an expression for $m$ of length at most $\tau(2m)$.  Thus
$\tau(m)\le\tau(2m)$, proving $\tau(2m)=\tau(m)$.

For the odd case, a shortest expression for $2m+1$ has an odd number of terms
$\pm1$.  Pair all but one of these terms as above.  This pairing produces, without
increasing length, an expression with exactly one term equal to $1$ or exactly
one term equal to $-1$, and all other terms divisible by $2$.
If the remaining odd term is $1$, then after subtracting it and dividing by $2$
we get an expression for $m$.  If it is $-1$, then after adding $1$ and dividing
by $2$ we get an expression for $m+1$.  Therefore
\[
  \tau(2m+1)\ge 1+\min\{\tau(m),\tau(m+1)\}.
\]
The reverse inequality follows by taking a shortest expression for either $m$ or
$m+1$, doubling it, and then adding $1$ in the first case or subtracting $1$ in
the second case.  Hence the recurrence follows.
\end{proof}

\begin{lem}\label{lem:Lt-weight}
For
\[
  L_t=1+4+4^2+\cdots+4^{t-1}=\frac{4^t-1}{3}\qquad (t\ge0),
\]
one has
\[
  \tau(L_t)=t.
\]
\end{lem}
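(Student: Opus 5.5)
We prove $\tau(L_t)=t$ by induction on $t$, using only the recurrences of \cref{lem:tau-recurrence}. The upper bound $\tau(L_t)\le t$ is immediate: $L_t=\sum_{i=0}^{t-1}4^i$ is itself a signed binary expression with $t$ terms. The work is in the lower bound. Since $L_t$ is odd, the odd recurrence will bring in the neighbouring values $L_t\pm1$, so we carry them along in a stronger statement.

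The key observations are the identities
\[
  L_t=2\cdot(2L_{t-1})+1,\qquad L_t+1=2(2L_{t-1}+1),\qquad L_t-1=4L_{t-1}.
\]
Here $L_t=1+4L_{t-1}$. In binary, $L_t=10101\cdots01$, $L_t+1=10101\cdots10$ and $L_t-1=10101\cdots00$. We prove by induction on $t\ge1$ the strengthened claim
\[
  \tau(L_t)=t,\qquad \tau(L_t+1)\ge t,\qquad \tau(L_t-1)=t-1,
\]
where we only need lower bounds for $L_t$ and $L_t+1$. Set $M_t=2L_{t}+1$, so that $L_{t+1}+1=2M_t$ and $\tau(L_{t+1}+1)=\tau(M_t)$. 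It is cleanest to track the pair $A_t=\tau(L_t)$ and $B_t=\tau(M_t)$, and we claim $A_t=t$ and $B_t=t+1$ for $t\ge1$ (for example $M_1=3$ has $\tau=2$).

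For the recursion, $L_{t+1}=4L_t+1=2(2L_t)+1$. By the odd recurrence,
\[
  \tau(L_{t+1})=1+\min\{\tau(2L_t),\tau(2L_t+1)\}=1+\min\{A_t,B_t\}.
\]
Also $M_{t+1}=2L_{t+1}+1=8L_t+3=2(4L_t+1)+1$, so
\[
  \tau(M_{t+1})=1+\min\{\tau(4L_t+1),\tau(4L_t+2)\}=1+\min\{A_{t+1},\tau(2L_t+1)\}=1+\min\{A_{t+1},B_t\}.
\]
Here we used $4L_t+1=L_{t+1}$ and $\tau(4L_t+2)=\tau(2L_t+1)$ by the even recurrence. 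Assuming $A_t=t$ and $B_t=t+1$, the first identity gives $A_{t+1}=1+t$, and then the second gives $B_{t+1}=1+\min\{t+1,t+1\}=t+2$. The base cases are $A_0=\tau(0)=0$ and $B_0=\tau(1)=1$, which are consistent with $A_t=t$, $B_t=t+1$ at $t=0$, so the induction can start at $t=0$.

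The main obstacle is choosing the right auxiliary companion sequence, here $M_t=2L_t+1$, so that the recurrences close up on themselves. Once the pair $(A_t,B_t)$ is identified, the rest is bookkeeping with the two formulas of \cref{lem:tau-recurrence}. The one detail to check is that at each odd step, both the values $m$ and $m+1$ appearing in the minimum reduce, via the even recurrence, to members of the pair $\{L_s,M_s\}$.
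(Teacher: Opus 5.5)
Your proof is correct and is essentially the paper's argument: the paper also sets $M_t=2L_t+1$ and proves $\tau(L_t)=t$ and $\tau(M_t)=t+1$ simultaneously by induction from $t=0$, using $L_t=2(2L_{t-1})+1$ and $L_t+1=2M_{t-1}$ together with the recurrences of \cref{lem:tau-recurrence}. Your preliminary ``strengthened claim'' involving $\tau(L_t-1)$ and the separate upper-bound remark are never used and can be dropped.
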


\begin{proof}
Let
\[
  M_t=2L_t+1.
\]
We prove simultaneously, by induction on $t\ge0$, that
\[
  \tau(L_t)=t\qquad\text{and}\qquad \tau(M_t)=t+1.
\]
For $t=0$, we have $L_0=0$ and $M_0=1$, so the claim is immediate.

Assume the result is known for $t-1$, with $t\ge1$.  Since
\[
  L_t=4L_{t-1}+1=2(2L_{t-1})+1,
\]
\cref{lem:tau-recurrence} gives
\[
\begin{aligned}
  \tau(L_t)
   &=1+\min\{\tau(2L_{t-1}),\tau(2L_{t-1}+1)\}  \\
   &=1+\min\{\tau(L_{t-1}),\tau(M_{t-1})\}  \\
   &=1+\min\{t-1,t\}=t.
\end{aligned}
\]
Next,
\[
  M_t=2L_t+1.
\]
Again using \cref{lem:tau-recurrence},
\[
  \tau(M_t)=1+\min\{\tau(L_t),\tau(L_t+1)\}.
\]
But
\[
  L_t+1=2M_{t-1}.
\]
Therefore, by \cref{lem:tau-recurrence} and the induction hypothesis,
\[
  \tau(L_t+1)=\tau(2M_{t-1})=\tau(M_{t-1})=t.
\]
Together with $\tau(L_t)=t$, these inequalities give
\[
  \tau(M_t)=1+\min\{t,t\}=t+1.
\]
The induction is complete.
\end{proof}

\subsection{The lower bound}\label{sec:lower-bound}

For an integer $L\ge1$, let
\[
  w_L=[a^L,b^L]=a^Lb^La^{-L}b^{-L}.
\]
The word $w_L$ labels the positively oriented boundary of the $L\times L$ square
$Q_L$.

\begin{lem}[A row-sum lower bound]\label{lem:row-lower}
For every $L\ge1$,
\[
  \Area_{\Pcal}(w_L)\ge L\tau(L).
\]
\end{lem}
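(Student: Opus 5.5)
Take any van Kampen diagram $\mathcal D$ over $\Pcal$ with boundary label $w_L$, with, say, $m$ two-cells. By \cref{cor:square-chain}, the projected $2$-chain satisfies $C_{\mathcal D}=Q_L$. The aim is to read this identity row by row and show that each of the $L$ rows forces at least $\tau(L)$ cells. Cells lying in different rows are different cells, so summing over rows then gives $m\ge L\tau(L)$.

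By \cref{lem:projected-chain}, every $2$-cell of $\mathcal D$ projects to a signed translate of a horizontal $2^k\times 1$ rectangle. Each such rectangle lies in exactly one row $\R\times[y,y+1]$ with $y\in\Z$. Group the cells according to the row $y$ in which they lie, and let $m_y$ be the number of cells in row $y$, so that $m=\sum_y m_y$. Comparing coefficients of the unit squares $s_{x,y}$ for a fixed $y$, we get
\[
  \sum_{\Pi\text{ in row }y}\eps_\Pi\,\mathbf 1_{[u_\Pi,\,u_\Pi+2^{k_\Pi})}
  =\mathbf 1_{[0,L)}\quad (0\le y\le L-1).
\]
Here $\mathbf 1_I$ denotes the indicator function on integers $x$ of the interval $I$, and $\eps_\Pi\in\{\pm1\}$ is the orientation sign of the cell $\Pi$.

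Next, sum the displayed identity over all $x\in\Z$; this is the row-sum step. Each cell $\Pi$ contributes $\eps_\Pi 2^{k_\Pi}$, and the right side contributes $L$. Hence $L=\sum_{\Pi\text{ in row }y}\eps_\Pi 2^{k_\Pi}$ is a signed sum of $m_y$ powers of two. By \cref{def:signed-weight} this gives $m_y\ge\tau(L)$ for each $0\le y\le L-1$, and therefore
\[
  m\ge\sum_{y=0}^{L-1}m_y\ge L\tau(L).
\]
Since $\mathcal D$ was arbitrary, $\Area_{\Pcal}(w_L)\ge L\tau(L)$.

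The only step needing care is the justification that the chain identity really separates into these row identities. This holds because every cell contributes only squares of a single row. Also, the comparison of coefficients is an equality of finite chains, so it is legitimate even though the diagram may contain cancelling pairs of cells. No further obstacle is expected; the substantive input is \cref{cor:square-chain}, which is already available.
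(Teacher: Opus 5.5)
Your proposal is correct and follows essentially the same route as the paper. Both use \cref{cor:square-chain} to get $C_{\mathcal D}=Q_L$ and then apply the row-sum $\rho_y$ in each of the $L$ rows. Since each cell lies in a single row, $L$ becomes a signed sum of $\nu_y$ powers of two, giving $\nu_y\ge\tau(L)$, and summing over rows yields $L\tau(L)$.

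Your intermediate coefficient-by-coefficient row identity is just an unpacked form of the paper's homomorphism $\rho_y$.
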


\begin{proof}
Let $\mathcal D$ be an arbitrary van Kampen diagram over $\Pcal$ with boundary label
$w_L$.  By \cref{cor:square-chain}, its projected $2$-chain satisfies
\[
  C_{\mathcal D}=Q_L.
\]

For each $y\in\Z$, define the row-sum homomorphism on finite cellular
$2$-chains by
\[
  \rho_y\left(\sum_{(x,z)\in\Z^2} c_{x,z}s_{x,z}\right)=\sum_{x\in\Z}c_{x,y}.
\]
For $y=0,1,\ldots,L-1$, the $y$-th row of $Q_L$ consists of exactly $L$ unit
squares with coefficient $1$.  Hence
\[
  \rho_y(C_{\mathcal D})=\rho_y(Q_L)=L.
\]

Let $\nu_y$ be the number of $2$-cells of $\mathcal D$ whose projected rectangle lies in the
horizontal strip $\R\times[y,y+1]$.  Each projected $r_k^{\pm1}$-cell is a
horizontal $2^k\times1$ rectangle and hence lies in exactly one such unit-height
strip.  Thus summing the quantities $\nu_y$ over rows counts each cell at most
once.  A cell of type $r_k^{\pm1}$ in this strip contributes $\pm 2^k$ to
$\rho_y(C_{\mathcal D})$, and cells in other strips contribute $0$.  Therefore
$L$ is expressible as a signed sum of $\nu_y$ powers of two.  By
the definition of $\tau(L)$,
\[
  \nu_y\ge \tau(L).
\]
Summing over $y=0,1,\ldots,L-1$, we obtain
\[
  \#\{\text{$2$-cells of }\mathcal D\}\ge \sum_{y=0}^{L-1} \nu_y\ge L\tau(L).
\]
Since $\mathcal D$ was arbitrary, $\Area_{\Pcal}(w_L)\ge L\tau(L)$.
\end{proof}

\begin{prop}[Lower bound]\label{prop:lower}
There exists $c>0$ and $n_0\ge1$ such that for every $n\ge n_0$,
\[
  \delta_{\Pcal}(n)\ge c n\log n.
\]
\end{prop}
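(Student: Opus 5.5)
Choose $L=L_t=(4^t-1)/3$ and use $w_{L_t}$. By \cref{lem:row-lower} and \cref{lem:Lt-weight},
\[
  \Area_{\Pcal}(w_{L_t})\ge L_t\,\tau(L_t)=tL_t,
\]
and $|w_{L_t}|=4L_t$. Since $L_t<4^t/3$, we have $t>\log_4(3L_t)\ge \tfrac12\log_2 L_t$. Hence for $m_t=4L_t$,
\[
  \delta_{\Pcal}(m_t)\ge tL_t \ge \tfrac{1}{8}\, m_t\log_2(m_t/4).
\]
This is a lower bound of order $m\log m$ along the geometric sequence $m_t=4L_t$, whose consecutive ratios tend to $4$.

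To pass to all $n$, I use monotonicity. $\delta_{\Pcal}$ is nondecreasing in $n$, since it is a maximum over words of length at most $n$. Given large $n$, pick the largest $t$ with $4L_t\le n$. Then $4L_{t+1}>n$, and $L_{t+1}=4L_t+1$ gives $4L_t\ge (n-4)/4\ge n/8$ for $n\ge 8$. Therefore
\[
  \delta_{\Pcal}(n)\ge \delta_{\Pcal}(4L_t)\ge tL_t.
\]
Here $L_t\ge n/32$, and $t\ge \tfrac12\log_2 L_t\ge \tfrac12\log_2(n/32)$, which is at least $\tfrac14\log_2 n$ once $n\ge 2^{10}$. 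So $\delta_{\Pcal}(n)\ge \tfrac{1}{128}\,n\log_2 n$ for $n\ge n_0$, and we may take $c=1/128$ with the logarithm to base $2$; the base only changes the constant.

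There is no real obstacle, since the substantive steps, the row-sum bound and the value of $\tau(L_t)$, are already done. The only care needed is to keep the inequality $t\gtrsim\log L_t$ explicit and to check that $w_{L_t}$ is freely reduced, which holds because $L_t\ge1$ for $t\ge1$.
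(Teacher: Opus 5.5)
Your proposal is correct and follows the paper's proof essentially verbatim. You use the same test words $[a^{L_t},b^{L_t}]$ with $L_t=(4^t-1)/3$, the row-sum bound combined with $\tau(L_t)=t$, and monotonicity of $\delta_{\Pcal}$ across the geometrically spaced lengths $4L_t$; the only difference is that you make the constants explicit ($c=1/128$, $n_0=2^{10}$), where the paper simply uses $n_{t+1}\le 5n_t$.
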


\begin{proof}
Let
\[
  L_t=\frac{4^t-1}{3}\qquad (t\ge1)
\]
and consider
\[
  w_t=w_{L_t}=[a^{L_t},b^{L_t}].
\]
Then
\[
  |w_t|=4L_t.
\]
By \cref{lem:row-lower,lem:Lt-weight},
\[
  \Area_{\Pcal}(w_t)\ge L_t\tau(L_t)=L_t t.
\]
Thus, putting $n_t=4L_t$, we have
\[
  \delta_{\Pcal}(n_t)\ge L_t t=\frac{n_t}{4}t.
\]
Since
\[
  n_t=4\frac{4^t-1}{3},
\]
we have $t$ comparable to $\log n_t$.  In particular, for all sufficiently large
$t$,
\[
  \delta_{\Pcal}(n_t)\ge c_1 n_t\log n_t
\]
for some constant $c_1>0$.

It remains to pass from the subsequence $n_t$ to all large $n$.  The sequence
$n_t$ has bounded multiplicative gaps, since
\[
  n_{t+1}=4n_t+4\le5n_t\qquad (t\ge1).
\]
Given large $n$, choose $t$ such that
\[
  n_t\le n<n_{t+1}.
\]
Then $n_t\ge n/5$.  Since $\delta_{\Pcal}$ is nondecreasing,
\[
  \delta_{\Pcal}(n)\ge \delta_{\Pcal}(n_t)
  \ge c_1 n_t\log n_t
  \ge c n\log n
\]
for some constant $c>0$ and all sufficiently large $n$.
\end{proof}

\subsection{Completing the proof of the dyadic-strip theorem}\label{sec:proof-main}

\begin{proof}[Proof of \cref{thm:main}]
The presentation $\Pcal$ defines $\Z^2$ because its first relator is
$r_0=[a,b]$, and all other relators already hold in $\Z^2$.

The upper bound
\[
  \delta_{\Pcal}(n)=O(n\log n)
\]
is \cref{prop:upper}.  The lower bound
\[
  \delta_{\Pcal}(n)=\Omega(n\log n)
\]
is \cref{prop:lower}.  Hence $\delta_{\Pcal}(n)$ has order $n\log n$.
Consequently,
\[
  \frac{\delta_{\Pcal}(n)}{n}\to\infty
  \qquad\text{and}\qquad
  \frac{\delta_{\Pcal}(n)}{n^2}\to0.
\]
Equivalently, the Dehn function of this infinite presentation is superlinear but
subquadratic.
\end{proof}

\section{A general scalar relation-invariant construction}\label{sec:general-relation-invariant}

The dyadic-strip construction is geometric.  This section gives a more algebraic mechanism producing logarithmic and sublinear Dehn functions for specified infinite presentations, starting from a conjugacy-invariant homomorphism on the relation subgroup.  The estimates are stated in terms of the growth of that invariant on relation words.

Let
\[
  G=\langle X\mid R\rangle
\]
be a fixed finite presentation, where $X$ is finite, and let
\[
  N=\ker(F(X)\to G).
\]
\begin{defn}[Scalar relation invariant]\label{def:scalar-relation-invariant}
A homomorphism
\[
  \chi:N\to\Z
\]
is a \emph{conjugacy-invariant relation invariant} if
\[
  \chi(fuf^{-1})=\chi(u)
\]
for all $f\in F(X)$ and $u\in N$.  Equivalently, $\chi$ factors through the
quotient $N/[F(X),N]$.
\end{defn}

Assume throughout this section, unless explicitly stated otherwise, that
$\chi(N)=\Z$.  Choose $q\in N$ with $\chi(q)=1$.  Replacing $q$ by a cyclically
reduced conjugate if necessary, we assume that $q$ is cyclically reduced.  This
replacement does not change $\chi(q)$, by conjugacy-invariance.

Define the \emph{growth function of $\chi$} by
\[
  \Phi_\chi(n)=\max\{|\chi(w)|: w\in N,\ |w|_X\le n\}.
\]
The maximum is finite for every $n$, since there are only finitely many words of
length at most $n$ over $X^{\pm1}$.  The special case used below for small-cancellation groups and word-hyperbolic groups is the case
$\Phi_\chi(n)=O(n)$.  The signed-area invariant for $\Z^2$ instead satisfies
$\Phi_\chi(n)=O(n^2)$.

\subsection{The logarithmic construction}

Let $S_{\log}$ be the set consisting of all nontrivial cyclically reduced words
$u\in N$ with $\chi(u)=0$, together with the cyclically reduced words
$q^{2^k}$, $k=0,1,2,\ldots$.  Define the infinite presentation
\[
  \Qcal_{\log}=\langle X\mid S_{\log}\rangle.
\]

\begin{rem}\label{rem:general-log-noneffective}
The definition of $S_{\log}$ is generally non-effective: it uses all cyclically reduced words in $N\cap\ker\chi$, and no decidability or recursive enumerability assertion is made without extra hypotheses.
\end{rem}

\begin{prop}\label{prop:general-log-presents-G}
The presentation $\Qcal_{\log}$ presents the group $G$.
\end{prop}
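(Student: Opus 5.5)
We must show that the normal closure $M=\ncl_{F(X)}(S_{\log})$ equals $N$. Once this holds, $G(\Qcal_{\log})=F(X)/M=F(X)/N\cong G$, with the isomorphism induced by the identity on $X$. We prove the two inclusions separately.

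For $M\subseteq N$, it suffices to check that every element of $S_{\log}$ lies in $N$, since $N$ is normal. The words $u$ with $\chi(u)=0$ are in $N$ by definition. Each $q^{2^k}$ lies in $N$ because $q\in N$ and $N$ is a subgroup. We should also confirm that $q^{2^k}$ is cyclically reduced, as the convention requires. This holds because a positive power of a nontrivial cyclically reduced word is again cyclically reduced. Note that $q\neq1$, since $\chi(q)=1\neq0$.

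For $N\subseteq M$, take an arbitrary $w\in N$ and set $m=\chi(w)\in\Z$. Consider $u=wq^{-m}\in N$. By additivity, $\chi(u)=m-m=0$. If $u=1$, then $w=q^m\in M$, because $q=q^{2^0}\in S_{\log}$. Otherwise, let $u'$ be a cyclically reduced conjugate of $u$, say $u'=fuf^{-1}$. Then $u'\in N$, since $N$ is normal, and $\chi(u')=0$ by conjugacy invariance. Moreover $u'$ is nontrivial and cyclically reduced, so $u'\in S_{\log}$. Hence $u=f^{-1}u'f\in M$, and therefore $w=uq^m\in M$.

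The only point needing care is the bookkeeping just described: passing from $u$ to a cyclically reduced conjugate so that it literally lies in $S_{\log}$, and handling the trivial case separately. Both inclusions together give $M=N$, which completes the argument.
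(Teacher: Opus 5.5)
Your proof is correct and follows essentially the same route as the paper: relators lie in $N$ for one inclusion, and for the other you split $w=(wq^{-m})q^{m}$, with $wq^{-m}$ trivial or conjugate to a zero-$\chi$ relator and $q=q^{2^0}$ a relator. Your extra check that each $q^{2^k}$ is nontrivial and cyclically reduced is a small detail the paper leaves implicit.
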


\begin{proof}
Every defining relator of $\Qcal_{\log}$ belongs to $N$, so the identity map on
$X$ induces a surjective homomorphism from the group presented by
$\Qcal_{\log}$ onto $G$.  Conversely, let $w\in N$.  Put $A=\chi(w)$.  Then
\[
  \chi(wq^{-A})=0.
\]
If the freely reduced word representing $wq^{-A}$ is nontrivial, any cyclically
reduced conjugate of it is one of the defining relators of $\Qcal_{\log}$; if it
is trivial, no relator is needed.  In either case $wq^{-A}$ belongs to the normal
closure of the defining relators of $\Qcal_{\log}$.  Also $q=q^{2^0}$ is one of
the defining relators.  Hence $q^A$ belongs to the same normal closure, and
therefore so does
\[
  w=(wq^{-A})q^A.
\]
Thus the normal closure of the defining relators of $\Qcal_{\log}$ is exactly
$N$, and the presented group is $F(X)/N\cong G$.
\end{proof}

\begin{thm}[Logarithmic presentations from polynomially bounded invariants]\label{thm:general-log}
Suppose that there are constants $C_0,D>0$ such that
\[
  \Phi_\chi(n)\le C_0n^D+C_0
\]
for all $n\ge1$.  Then
\[
  \delta_{\Qcal_{\log}}(n)\fsimeq \log n.
\]
\end{thm}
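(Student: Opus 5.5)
The plan is to prove an upper bound $\delta_{\Qcal_{\log}}(n)\le C\log n + C$ and a matching lower bound $\delta_{\Qcal_{\log}}(n)\ge c\log n$ for all large $n$. Both reduce to the arithmetic of signed binary weight from \cref{def:signed-weight}, because $\chi$ is additive on products of conjugates of relators. The key identity is the following. If $w=\prod_{i=1}^m g_i s_i^{\eps_i}g_i^{-1}$ with $s_i\in S_{\log}$, then by conjugacy-invariance $\chi(w)=\sum_i \eps_i\chi(s_i)$. Here $\chi(s_i)=0$ for the words of $\ker\chi$, and $\chi(q^{2^k})=2^k$.

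\textbf{Upper bound.} Let $w\in N$ be freely reduced with $|w|_X\le n$, and put $A=\chi(w)$, so that $|A|\le \Phi_\chi(n)\le C_0n^D+C_0$. Write $|A|$ in binary as a sum of at most $\lfloor\log_2|A|\rfloor+1$ distinct powers of two. Then $q^{A}=\prod_j q^{\pm 2^{k_j}}$ is a product of at most $\log_2|A|+1$ defining relators; these are pairwise commuting powers of $q$, so no conjugation is even needed. The element $wq^{-A}$ satisfies $\chi(wq^{-A})=0$, so if it is nontrivial, its cyclically reduced conjugate lies in $S_{\log}$ and it has area at most one by \cref{rem:one-cell-whisker}. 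Hence
\[
\Area(w)\le 1+\log_2(C_0n^D+C_0)+1\le D\log_2 n+C_1 .
\]
This gives $\delta_{\Qcal_{\log}}\fpreceq\log n$. I would also check small $n$ and the case $A=0$ separately; both are harmless since the additive constant absorbs them.

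\textbf{Lower bound.} By the identity above, any expression of $w$ as a product of $m$ conjugates of relators gives $\chi(w)=\sum \eps_i 2^{k_i}$ over the $q$-power relators used. Hence $\Area(w)\ge\tau(|\chi(w)|)$. Now take $w_t=q^{L_t}$ with $L_t=(4^t-1)/3$. The word $q$ is cyclically reduced, so $q^{L_t}$ is freely reduced of length $L_t|q|$. By \cref{lem:Lt-weight}, $\Area(w_t)\ge t$. Setting $n_t=L_t|q|$ gives $t\ge \log_4 n_t - O(1)$, so $\delta_{\Qcal_{\log}}(n_t)\ge c\log n_t$. Since $n_{t+1}\le 5n_t$ and $\delta$ is nondecreasing, the same interpolation used in \cref{prop:lower} yields $\delta_{\Qcal_{\log}}(n)\ge c'\log n$ for all large $n$. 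This gives $\log n\fpreceq\delta_{\Qcal_{\log}}$, after enlarging the constant $C$ in \cref{def:fine-comparison} to cover finitely many small $n$; there $\log n$ is bounded.

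\textbf{Main obstacle.} Both halves are short once the additivity identity is in place, so the only delicate point is making that identity airtight. The relators in $\ker\chi$ must contribute exactly zero, and $q^{2^k}$ and its inverse must contribute $\pm2^k$ regardless of conjugation. Both follow from \cref{def:scalar-relation-invariant}, since $\chi$ is a homomorphism on $N$ and all conjugates $g_is_i^{\eps_i}g_i^{-1}$ lie in $N$. One should also note that the relator $q^{2^k}$ is a cyclically reduced word because $q$ is. The remaining care is bookkeeping in the fine comparison: it has no additive linear term, so the upper bound must have genuinely logarithmic constants, which the computation above provides.
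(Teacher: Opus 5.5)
Your proposal is correct and follows the paper's proof essentially step for step. The upper bound absorbs $wq^{-A}$ into at most one zero-$\chi$ cell via \cref{rem:one-cell-whisker} and fills $q^{A}$ using the binary expansion of $A$, while the lower bound applies $\chi$ to the boundary relation for $q^{L_t}$, invokes \cref{lem:Lt-weight}, and interpolates to all large $n$ using the bounded ratio of consecutive lengths, with constants kept logarithmic as the fine comparison requires.
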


\begin{proof}
Let $w\in N$ be represented by a word of length at most $n$, and put
$A=\chi(w)$.  A cyclically reduced conjugate of $wq^{-A}$ is either empty or a defining
relator of $\Qcal_{\log}$.  By \cref{rem:one-cell-whisker}, this contributes
at most one cell and reduces the filling problem for $w$ to the filling problem
for $q^A$.
Write $A$ as a signed binary sum
\[
  A=\sum_{i=1}^r\eps_i2^{k_i},\qquad \eps_i\in\{\pm1\}.
\]
Using the usual binary expansion, one can choose such an expression with
\[
  r\le \lfloor \log_2 |A|\rfloor+1
\]
when $A\ne0$, and with $r=0$ when $A=0$.  Each factor
$q^{\eps_i2^{k_i}}$ is filled by one relator cell, with the opposite orientation
if $\eps_i=-1$.  Since $|A|\le \Phi_\chi(n)\le C_0n^D+C_0$, this estimate gives
\[
  \Area_{\Qcal_{\log}}(w)
  \le 1+\log_2(C_0n^D+C_0)+O(1)
  \le C_1\log n+C_1.
\]
Hence
\[
  \delta_{\Qcal_{\log}}(n)\fpreceq \log n.
\]

For the lower bound let
\[
  M_t=1+4+4^2+\cdots+4^{t-1}=\frac{4^t-1}{3}.
\]
As proved in \cref{lem:Lt-weight}, the least number of signed powers of two
needed to express $M_t$ is exactly $t$.  Consider the word
\[
  w_t=q^{M_t}.
\]
Because $q$ is cyclically reduced,
\[
  |w_t|=M_t|q|.
\]
Also $\chi(w_t)=M_t$.  If $\mathcal D_t$ is a van Kampen diagram over $\Qcal_{\log}$ for
$w_t$ with $\nu_t$ two-cells, then the usual boundary relation in $F(X)$ has the
form
\[
  w_t=\prod_{i=1}^{\nu_t} g_i r_i^{\varepsilon_i} g_i^{-1},
  \qquad g_i\in F(X),\quad \varepsilon_i\in\{\pm1\},\quad r_i\in S_{\log},
\]
up to free reduction.  Applying $\chi$ to this equality, and using
conjugacy-invariance, gives an expression for $M_t$ as a signed sum of at most
$\nu_t$ powers of two.  Here cyclic conjugates of relators have the same $\chi$-value because
$\chi$ is conjugacy-invariant; cells with boundary label in $\ker\chi$
contribute $0$, while cells of type $q^{2^k}$ contribute $\pm 2^k$.
Therefore
\[
  \nu_t\ge t.
\]
Thus
\[
  \Area_{\Qcal_{\log}}(w_t)\ge t\simeq \log M_t\simeq \log |w_t|.
\]
Since $|w_{t+1}|/|w_t|$ is bounded above independently of $t$, monotonicity of
the Dehn function upgrades this subsequence lower bound to a lower bound for all
large $n$.  Indeed, choose $t$ with $|w_t|\le n<|w_{t+1}|$.  Then
$|w_t|\ge C_2^{-1}n$ for a constant $C_2$, and therefore
\[
  \delta_{\Qcal_{\log}}(n)\ge \delta_{\Qcal_{\log}}(|w_t|)
  \ge c_2\log |w_t|-C_3
  \ge c_3\log n-C_4.
\]
Thus
\[
\log n \fpreceq \delta_{\Qcal_{\log}}(n).
\]

Combining the upper and lower bounds proves the theorem.
\end{proof}

\subsection{Polynomial envelope constructions}

The polynomial-envelope construction also works for a general relation invariant.  The upper bound is governed by $\Phi_\chi$; matching lower peaks additionally require large hard values of $\chi$ to be realized by sufficiently short relation words.

Fix $D>0$ and assume that
\[
  \Phi_\chi(n)\le C_0n^D+C_0
\]
for all $n\ge1$.  Let $0<\alpha<D$ and put
\[
  \beta=\frac{\alpha}{D}\in(0,1),
  \qquad
  p_\beta=\frac1{1-\beta}.
\]
Choose a divisibility sequence
\[
  1=s_0\mid s_1\mid s_2\mid\cdots
\]
as follows.  Having chosen $s_j$, set
\[
  m_j=2\left\lceil\frac{s_j^{p_\beta-1}}2\right\rceil,
  \qquad s_{j+1}=m_js_j.
\]
Then $m_j$ is even and
\[
  s_j^{p_\beta}\le s_{j+1}\le 3s_j^{p_\beta}.
\]
Let $S_\beta=\{s_j:j\ge0\}$ and define
\[
  \tau_\beta(n)=\min\left\{r:n=\sum_{i=1}^r\eps_i s_{j_i},\
  \ \eps_i\in\{\pm1\},\ j_i\ge0\right\}
\]
for $n\in\Z$, with $\tau_\beta(0)=0$.

\begin{lem}\label{lem:general-arithmetic}
There are constants $C,c>0$ such that
\[
  \tau_\beta(n)\le C|n|^\beta+C
\]
for every $n\in\Z$.  Moreover, for
\[
  H_j=\frac{s_{j+1}}2,
\]
one has
\[
  \tau_\beta(H_j)\ge cH_j^\beta
\]
for every $j$.
\end{lem}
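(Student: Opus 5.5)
For the upper bound I will use a greedy mixed-radix expansion in the basis $s_0\mid s_1\mid\cdots$. Given $n\ge0$ (the case $n<0$ follows by symmetry), choose $J$ with $s_J\le n<s_{J+1}$. Since $s_{j+1}=m_js_j$, the number $n$ has a unique expansion $n=\sum_{j\le J} d_js_j$ with digits $0\le d_j<m_j$. Using only positive terms gives
\[
  \tau_\beta(n)\le \sum_{j\le J} d_j.
\]
The top digit satisfies $d_J\le n/s_J$. Each lower digit satisfies $d_j< m_j\le s_j^{p_\beta-1}+2$. I will then use $m_j\approx s_{j+1}/s_j$ together with $s_{j+1}\approx s_j^{p_\beta}$, so that $m_j\approx s_{j+1}^{1-1/p_\beta}=s_{j+1}^{\beta}$ (because $1-1/p_\beta=\beta$). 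Hence the lower digits contribute at most $\sum_{j<J}(s_{j+1}^\beta+2)$. This is a super-geometrically growing sum, so it is $O(s_J^\beta)+O(J)$, and the term $J$ is $O(\log\log n)$, which is negligible. For the top digit, $n/s_J\le n/ s_J$. If $s_J\ge n^{1-\beta}$ this is at most $n^\beta$. Otherwise $n<s_{J+1}\le 3s_J^{p_\beta}$, so $s_J\ge (n/3)^{1/p_\beta}=(n/3)^{1-\beta}$, and the top digit is $O(n^\beta)$ in this case as well. Since $s_J\le n$, the lower digits give $O(n^\beta)$, which proves $\tau_\beta(n)\le C|n|^\beta+C$.

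The lower bound for $H_j=s_{j+1}/2=(m_j/2)s_j$ is the main step. I will reduce modulo $s_{j+1}$. Every $s_i$ with $i\ge j+1$ is divisible by $s_{j+1}$. Take an optimal expression $H_j=\sum\eps_is_{j_i}$ and let $T$ be the sum of the terms with $j_i\le j$, using $r'$ terms. Then $T\equiv H_j\pmod{s_{j+1}}$, so $|T|\ge \min_k|H_j-ks_{j+1}|=s_{j+1}/2$. This is precisely why $H_j$ is taken to be half of $s_{j+1}$ and why $m_j$ is chosen even. Each small term has absolute value at most $s_j$, so $r'\ge s_{j+1}/(2s_j)=m_j/2$. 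Finally, $m_j/2\ge s_j^{p_\beta-1}/2$, and $H_j\le \tfrac32 s_j^{p_\beta}$, which gives $H_j^\beta\le C s_j^{\beta p_\beta}=Cs_j^{p_\beta-1}$, using $\beta p_\beta=p_\beta-1$. Combining these yields $\tau_\beta(H_j)\ge cH_j^\beta$.

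The only delicate points are bookkeeping. The rounding in $m_j$ (the "$+2$" terms) must be absorbed into the constants. The small-$j$ cases, where $s_j=1$ and $m_j$ may equal $2$, must be handled by adjusting the constant $c$. I expect the upper-bound case analysis on the top digit to be the fiddliest step, and I will handle it as above using $s_{J+1}\le3s_J^{p_\beta}$.
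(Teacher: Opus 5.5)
Your proposal is correct and follows essentially the same route as the paper: a mixed-radix expansion along the divisibility chain for the upper bound, with the top digit controlled via $s_{J+1}\le 3s_J^{p_\beta}$ and the lower digits by a geometric sum, and reduction modulo $s_{j+1}$ for the lower bound, giving $\tau_\beta(H_j)\ge m_j/2\ge s_j^{p_\beta-1}/2$. Two minor simplifications are available. Your case split on the top digit is unnecessary, since $s_J\ge (n/3)^{1-\beta}$ holds unconditionally. No separate treatment of small $j$ is needed either, because $m_j\ge s_j^{p_\beta-1}$ and $H_j\le\frac32 s_j^{p_\beta}$ hold for every $j$.
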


\begin{proof}
The estimate is elementary; we recall the proof.  Assume $n\ge1$ and choose $J$ so that
$s_J\le n<s_{J+1}$.  Using the divisibility chain, write $n$ in mixed-radix
form
\[
  n=d_Js_J+d_{J-1}s_{J-1}+\cdots+d_0s_0,
\]
where $d_J=\lfloor n/s_J\rfloor$ and $0\le d_i<m_i=s_{i+1}/s_i$ for
$0\le i<J$.  Therefore
\[
  \tau_\beta(n)\le \frac{n}{s_J}+\sum_{i=0}^{J-1}m_i.
\]
Since $n<s_{J+1}\le 3s_J^{p_\beta}$,
\[
  \frac{n}{s_J}\le 3^{1/p_\beta}n^{1-1/p_\beta}=3^{1/p_\beta}n^\beta.
\]
The sequence $m_i$ is eventually at least geometrically increasing.  Indeed,
from $s_i^{p_\beta}\le s_{i+1}\le3s_i^{p_\beta}$ we get
\[
  s_i^{p_\beta-1}\le m_i\le 3s_i^{p_\beta-1},
\]
and hence $m_{i+1}/m_i\to\infty$.  In particular, after discarding finitely
many initial indices, the ratios $m_{i+1}/m_i$ are at least $2$, so each partial
sum is bounded by a fixed multiple of its last term.  After changing the
constant to absorb the finite initial part, we have
\[
  \sum_{i=0}^{J-1}m_i\le C_1m_{J-1}.
\]
Furthermore
\[
  m_{J-1}\le 3s_{J-1}^{p_\beta-1}
  \le 3s_J^{(p_\beta-1)/p_\beta}
  \le 3n^\beta.
\]
This estimate proves the upper bound for $n\ge1$; the case $n\le0$ follows
by symmetry.

For the lower bound, suppose that
\[
  H_j=\sum_{i=1}^r\eps_i s_{k_i}
\]
is a signed $S_\beta$-expansion.  Reduce modulo $s_{j+1}$.  All summands with
$k_i\ge j+1$ vanish modulo $s_{j+1}$, and all remaining summands have absolute
value at most $s_j$.  Since $H_j=s_{j+1}/2$ has distance $s_{j+1}/2$ from the
nearest multiple of $s_{j+1}$, one must have
\[
  rs_j\ge \frac{s_{j+1}}2.
\]
Thus
\[
  r\ge \frac{s_{j+1}}{2s_j}=\frac{m_j}{2}.
\]
The growth inequalities give $m_j\ge s_j^{p_\beta-1}$ and
$H_j=s_{j+1}/2\le (3/2)s_j^{p_\beta}$.  Since
$p_\beta\beta=p_\beta-1$, the last lower bound is bounded below by a positive
constant times $H_j^\beta$.  Taking the minimum over all signed expansions
proves the lower bound.
\end{proof}

Let $S_\alpha$ be the set consisting of all nontrivial cyclically reduced words
$u\in N$ with $\chi(u)=0$, together with the cyclically reduced words
$q^{s_j}$, $j=0,1,2,\ldots$.  Define
\[
  \Qcal_\alpha=\langle X\mid S_\alpha\rangle.
\]
As before, $s_0=1$, so $q$ is one of the defining relators, and the same proof
as in \cref{prop:general-log-presents-G} shows that $\Qcal_\alpha$ presents
$G$.  As in \cref{rem:general-log-noneffective}, this definition is not
asserted to be effective in general.

\begin{defn}\label{def:efficient-realization}
With notation as above, say that the \emph{hard values} $H_j=s_{j+1}/2$ are
\emph{$D$-efficiently realized by $\chi$} if there exist a constant $K>0$ and
words $z_j\in N$ such that
\[
  \chi(z_j)=H_j,
  \qquad
  |z_j|_X\le KH_j^{1/D}
\]
for all $j$.
\end{defn}

\begin{thm}[Polynomial envelopes from a scalar invariant]\label{thm:general-polynomial}
Assume that $\Phi_\chi(n)\le C_0n^D+C_0$ for all $n\ge1$, and let
$0<\alpha<D$.  Then the following conclusions hold.
\begin{enumerate}
\item There is a constant $C>0$ such that
\[
  \delta_{\Qcal_\alpha}(n)\le Cn^\alpha+C
\]
for all $n\ge1$.
\item If, in addition, the hard values $H_j=s_{j+1}/2$ are $D$-efficiently
realized by $\chi$, then there exist a constant $c>0$ and a sequence
$n_j\to\infty$ such that
\[
  \delta_{\Qcal_\alpha}(n_j)\ge cn_j^\alpha
\]
for every $j$.
\end{enumerate}
\end{thm}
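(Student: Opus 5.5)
The plan follows the proof of \cref{thm:general-log}, with the dyadic set $\{2^k\}$ replaced by the divisibility chain $S_\beta=\{s_j\}$. The two arithmetic inputs are the upper and lower estimates of \cref{lem:general-arithmetic}.

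\emph{Upper bound (part (1)).} Let $w\in N$ be freely reduced with $|w|_X\le n$, and put $A=\chi(w)$, so that $|A|\le\Phi_\chi(n)\le C_0n^D+C_0$.

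First, $wq^{-A}$ has $\chi$-value $0$. Hence a cyclically reduced conjugate of it is either trivial or a defining relator of $\Qcal_\alpha$. By \cref{rem:one-cell-whisker}, one cell reduces the filling of $w$ to the filling of $q^A$. Concretely, a diagram for $w$ is obtained by gluing a one-cell diagram for $wq^{-A}$ to a diagram for $q^A$.

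Next, write $A=\sum_{i=1}^r\eps_i s_{j_i}$ with $r=\tau_\beta(A)$. Each factor $q^{\eps_i s_{j_i}}$ is filled by one relator cell of type $q^{s_{j_i}}$, with the appropriate orientation. This gives
\[
\Area(w)\le 1+\tau_\beta(A)\le 1+C|A|^\beta+C .
\]
Finally, since $|A|^\beta\le (C_0n^D+C_0)^{\alpha/D}\le C'(n^\alpha+1)$, we get $\delta_{\Qcal_\alpha}(n)\le C''n^\alpha+C''$.

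\emph{Lower bound (part (2)).} Take the words $z_j$ from \cref{def:efficient-realization}, so $\chi(z_j)=H_j$ and $|z_j|_X\le KH_j^{1/D}$. Set $n_j=|z_j|_X$, using the freely reduced representative, which does not change $\chi$.

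Now let $\mathcal D$ be any diagram for $z_j$ with $\nu$ cells, and write $z_j=\prod g_ir_i^{\eps_i}g_i^{-1}$. Apply $\chi$ and use conjugacy invariance. Cells whose relator lies in $\ker\chi$ contribute $0$, and cells of type $q^{s_k}$ contribute $\pm s_k$. So $H_j$ is a signed sum of at most $\nu$ elements of $S_\beta$, and \cref{lem:general-arithmetic} gives
\[
\nu\ge\tau_\beta(H_j)\ge cH_j^\beta .
\]
Since $H_j\ge (n_j/K)^D$, we have $H_j^\beta\ge K^{-\alpha}n_j^{\alpha}$, and therefore $\delta_{\Qcal_\alpha}(n_j)\ge cK^{-\alpha}n_j^\alpha$.

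It remains to check that $n_j\to\infty$. Because $H_j\to\infty$ and $|\chi(z_j)|\le\Phi_\chi(n_j)$, the lengths $n_j$ are unbounded. Passing to a strictly increasing subsequence gives $n_j\to\infty$.

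\emph{Main obstacle.} The main point to get right is the bookkeeping in the lower bound. Each cell must contribute exactly $\pm s_k$ or $0$, and this requires that no relator of the form $q^{s_k}$ also appears among the $\ker\chi$ words. That holds because $\chi(q^{s_k})=s_k\ne0$.

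A cyclic conjugate of a relator in the diagram has the same $\chi$-value by conjugacy invariance, so the expression obtained is genuinely a signed $S_\beta$-expansion. Everything else is direct substitution of the arithmetic lemma.
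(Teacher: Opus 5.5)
Your proof is correct and follows the paper's argument essentially step for step. The upper bound uses one zero-$\chi$ cell plus $\tau_\beta(A)$ cells of type $q^{s_j}$. The lower bound applies the conjugacy-invariant $\chi$ to the boundary relation of a diagram for the efficient realizers $z_j$. The only cosmetic difference is that you obtain $n_j\to\infty$ from $H_j\le\Phi_\chi(n_j)$ (this already forces $n_j\to\infty$, so the subsequence step is unnecessary), whereas the paper cites the finiteness of the set of words of bounded length.
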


\begin{proof}
Let $w\in N$ have length at most $n$, and set $A=\chi(w)$.  The word
$wq^{-A}$ has $\chi$-value zero, so a cyclically reduced conjugate of it is
either empty or a defining relator of $\Qcal_\alpha$.  By
\cref{rem:one-cell-whisker}, this contributes at most one cell and reduces the
problem to filling $q^A$.  By
the definition of $\tau_\beta$, the latter word can be filled using at most
$\tau_\beta(A)$ cells of the form $q^{s_j}$ or their inverses.  Hence, using
the polynomial bound on $\Phi_\chi$ and \cref{lem:general-arithmetic},
\[
  \Area_{\Qcal_\alpha}(w)
  \le 1+\tau_\beta(A)
  \le C_1|A|^\beta+C_1
  \le C_2\Phi_\chi(n)^\beta+C_2
  \le C_3n^{D\beta}+C_3.
\]
Since $D\beta=\alpha$, this estimate proves the global upper bound.

Assume now that the hard values $H_j$ are $D$-efficiently realized, and choose
$z_j\in N$ as in \cref{def:efficient-realization}.  Replacing $z_j$ by its
freely reduced representative does not change its element of $F(X)$, does not
change $\chi(z_j)$, and can only decrease its length; hence we may assume that
$z_j$ is freely reduced.  If $\mathcal D_j$ is a van Kampen diagram over
$\Qcal_\alpha$ for $z_j$ with $\nu_j$ cells, then the boundary relation in
$F(X)$ can be written as
\[
  z_j=\prod_{i=1}^{\nu_j} g_i r_i^{\varepsilon_i} g_i^{-1},
  \qquad g_i\in F(X),\quad \varepsilon_i\in\{\pm1\},\quad r_i\in S_\alpha,
\]
up to free reduction.  Applying $\chi$ to this equality expresses $H_j$ as a
signed sum of at most $\nu_j$ elements of $S_\beta$.  Again, cyclic conjugates of relators have the
same $\chi$-value because $\chi$ is conjugacy-invariant.  Relators from
$N\cap\ker\chi$ contribute zero to this scalar equation, so including such cells
cannot reduce the required number of nonzero scalar contributions.  Therefore
\[
  \nu_j\ge \tau_\beta(H_j)\ge c_1H_j^\beta
\]
by \cref{lem:general-arithmetic}.  Since $|z_j|_X\le KH_j^{1/D}$, we have
$H_j^\beta\ge K^{-D\beta}|z_j|_X^{D\beta}=K^{-\alpha}|z_j|_X^\alpha$.  Hence
\[
  \Area_{\Qcal_\alpha}(z_j)\ge c_2|z_j|_X^\alpha.
\]
For these words the lengths $|z_j|_X$ tend to infinity, since there are only
finitely many words of bounded length and $\chi(z_j)=H_j\to\infty$.  The peaks
are therefore asserted at the actual word lengths of the realizing words, namely
at $n_j=|z_j|_X$.  Taking these values of $n_j$ proves the lower-bound peak
assertion.
\end{proof}

\begin{rem}\label{rem:no-pointwise-power-general}
The conclusion of \cref{thm:general-polynomial} is a polynomial envelope with
matching peaks under an efficient-realization hypothesis.  It does not assert
that $\delta_{\Qcal_\alpha}(n)\simeq n^\alpha$ pointwise for all large $n$.
The arithmetic construction is deliberately lacunary, and plateaux may occur
between the hard scales.
\end{rem}

\subsection{Existence of invariants and standard corollaries}

\begin{cor}[Existence of scalar relation invariants]\label{cor:existence-scalar-invariant}
Let
\[
  G=\langle X\mid R\rangle
\]
be a finite presentation.  Let $F(X)$ be the free group on $X$, and let
$N=\ker(F(X)\to G)$.  A conjugacy-invariant homomorphism $\chi:N\to\Z$ with infinite image exists if
and only if $N/[F(X),N]$ has positive free abelian rank.  Moreover,
\[
  \operatorname{rank}_{\Z} N/[F(X),N]
  = \operatorname{rank}_{\Z} H_2(G;\Z)
    + |X| - \operatorname{rank}_{\Z} H_1(G;\Z).
\]
Consequently such an invariant exists whenever
\[
  \operatorname{rank}_{\Z} H_2(G;\Z)+|X|-\operatorname{rank}_{\Z} H_1(G;\Z)>0.
\]
In particular, existence is guaranteed if either $H_2(G;\Z)$ has positive free
abelian rank, or the chosen generating set $X$ has cardinality strictly larger
than the first Betti number of $G$.
\end{cor}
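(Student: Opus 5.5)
The plan is to read off everything from the Hopf exact sequence attached to the extension $1\to N\to F\to G\to 1$, where $F=F(X)$. There are two separate claims. The first is the equivalence between existence of $\chi$ with infinite image and positivity of the free rank of $N/[F,N]$. The second is the rank formula. The \emph{consequently} and \emph{in particular} clauses then follow formally, since all three summands in the rank formula are nonnegative integers except for the $-\operatorname{rank} H_1$ term, which is bounded by $|X|$.

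For the equivalence, observe first that, by \cref{def:scalar-relation-invariant}, conjugacy-invariant homomorphisms $N\to\Z$ are exactly the homomorphisms $N/[F,N]\to\Z$. Next I will check that $N/[F,N]$ is a finitely generated abelian group. It is abelian because $[N,N]\subseteq[F,N]$. It is finitely generated because $N$ is the normal closure of the finite set $R$, and modulo $[F,N]$ every conjugate $frf^{-1}$ equals $r$; so the images of $R$ generate. A finitely generated abelian group $A\cong \Z^\rho\oplus T$ with $T$ finite admits a homomorphism to $\Z$ with infinite (equivalently nonzero) image if and only if $\rho>0$. One direction is projection to a $\Z$ summand. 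For the other, any homomorphism to $\Z$ kills $T$, so it factors through $\Z^\rho$, and that requires $\rho>0$ for a nonzero image.

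For the rank formula, I will use Hopf's exact sequence, which is standard (Brown \cite{BrownCohomology}):
\[
  0\to H_2(G;\Z)\to N/[F,N]\to F/[F,F]\to G/[G,G]\to 0.
\]
Here the first map comes from Hopf's formula $H_2(G)\cong (N\cap[F,F])/[F,N]$, the middle map is induced by inclusion $N\to F$, and the last is the abelianization of $F\to G$. Exactness at $N/[F,N]$ holds because the kernel of $N/[F,N]\to F^{ab}$ is $(N\cap[F,F])/[F,N]$. The image in $F^{ab}$ is $N[F,F]/[F,F]$, whose cokernel is $F/N[F,F]\cong G^{ab}$. Tensoring with $\mathbb{Q}$ is exact, so taking ranks gives the alternating-sum identity
\[
\operatorname{rank}H_2(G)-\operatorname{rank}N/[F,N]+|X|-\operatorname{rank}H_1(G)=0,
\]
using $F^{ab}\cong\Z^{|X|}$ and $H_1(G)=G^{ab}$. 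All the groups involved are finitely generated: $H_2(G)$ is a subgroup of the finitely generated abelian group $N/[F,N]$, so the ranks are finite.

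The only step with real content is the identification of $\ker(N/[F,N]\to F^{ab})$ with $H_2(G)$, i.e.\ Hopf's formula. I would simply cite it rather than reprove it. If one prefers to avoid it, there is an alternative: use the Lyndon--Hochschild--Serre five-term exact sequence $H_2(F)\to H_2(G)\to H_1(N)_G\to H_1(F)\to H_1(G)\to 0$, together with $H_2(F)=0$ and $H_1(N)_G=N/[F,N]$. The final clauses then follow because positive $\operatorname{rank}H_2$, or $|X|>b_1(G)$, makes the right-hand side of the rank formula positive.
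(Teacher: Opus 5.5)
Your proposal is correct and follows essentially the same route as the paper. Both identify conjugacy-invariant homomorphisms $N\to\Z$ with homomorphisms $N/[F(X),N]\to\Z$, then take ranks in the Hopf exact sequence $0\to H_2(G;\Z)\to N/[F(X),N]\to H_1(F(X);\Z)\to H_1(G;\Z)\to 0$. Your explicit check that $N/[F(X),N]$ is finitely generated, by the images of $R$, is a worthwhile detail the paper leaves implicit, since that is what makes positive rank equivalent to the existence of a homomorphism to $\Z$ with infinite image.
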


\begin{proof}
A homomorphism $\chi:N\to\Z$ is conjugacy-invariant precisely when it vanishes on
$[F(X),N]$.  Thus such homomorphisms are exactly the homomorphisms
\[
  N/[F(X),N]\to\Z.
\]
One of them has infinite image if and only if $N/[F(X),N]$ has positive
torsion-free rank.

The rank formula follows from the Hopf exact sequence for the presentation
$1\to N\to F(X)\to G\to 1$:
\[
  0\longrightarrow H_2(G;\Z)
  \longrightarrow N/[F(X),N]
  \longrightarrow H_1(F(X);\Z)
  \longrightarrow H_1(G;\Z)
  \longrightarrow 0.
\]
Since $F(X)$ is free on $X$, we have $H_1(F(X);\Z)\cong\Z^{|X|}$.  Taking free
abelian ranks in the exact sequence gives
\[
  \operatorname{rank}_{\Z} N/[F(X),N]
  = \operatorname{rank}_{\Z} H_2(G;\Z)
    + \operatorname{rank}_{\Z}\ker\{H_1(F(X);\Z)\to H_1(G;\Z)\}.
\]
The map $H_1(F(X);\Z)\to H_1(G;\Z)$ is surjective, so the rank of its kernel is
$|X|-\operatorname{rank}_{\Z}H_1(G;\Z)$.  This argument proves the displayed formula and
all the stated consequences.
\end{proof}

\begin{cor}[The linearly bounded case]\label{cor:linear-scalar-invariant}
Suppose that $\chi:N\to\Z$ is a conjugacy-invariant relation invariant with
$\chi(N)=\Z$ and
\[
  |\chi(w)|\le C_0|w|_X+C_0
\]
for every $w\in N$.  Then the following conclusions hold.
\begin{enumerate}
\item The logarithmic construction gives an infinite presentation of $G$ on $X$
with Dehn function $\fsimeq\log n$.
\item For every $0<\alpha<1$, the polynomial construction gives an infinite
presentation of $G$ on $X$ with a global $O(n^\alpha)$ Dehn-function upper
bound and matching $n^\alpha$-order lower-bound peaks along an infinite
sequence.
\end{enumerate}
\end{cor}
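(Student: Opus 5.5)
Both parts follow from \cref{thm:general-log,thm:general-polynomial} with $D=1$; the only genuine check is the efficient-realization hypothesis of \cref{def:efficient-realization}. The linear bound $|\chi(w)|\le C_0|w|_X+C_0$ for all $w\in N$ says exactly that $\Phi_\chi(n)\le C_0n+C_0$. So the growth hypothesis of both theorems holds with $D=1$.

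\textbf{Part (1).} \cref{thm:general-log} applies directly and gives $\delta_{\Qcal_{\log}}(n)\fsimeq\log n$. By \cref{prop:general-log-presents-G}, $\Qcal_{\log}$ presents $G$ on the alphabet $X$.

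\textbf{Part (2), upper bound.} Fix $0<\alpha<1=D$. Part (1) of \cref{thm:general-polynomial} gives $\delta_{\Qcal_\alpha}(n)\le Cn^\alpha+C$. The argument of \cref{prop:general-log-presents-G}, applied with $s_0=1$, shows that $\Qcal_\alpha$ presents $G$.

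\textbf{Part (2), lower-bound peaks.} It remains to check that the hard values $H_j=s_{j+1}/2$ are $1$-efficiently realized. This means we need words $z_j\in N$ with $\chi(z_j)=H_j$ and $|z_j|_X\le KH_j$.

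\begin{itemize}
\item[] Take $z_j=q^{H_j}$. By \cref{def:scalar-relation-invariant}, $\chi$ is a homomorphism on $N$, and $\chi(q)=1$, so $\chi(q^{H_j})=H_j$.
\item[] Since $q$ is cyclically reduced, $|q^{H_j}|_X=H_j|q|_X$. This holds because no cancellation occurs between consecutive copies of a cyclically reduced word.
\item[] Hence $K=|q|_X$ works, and $H_j$ is an integer since $m_j$ is even, so $s_{j+1}$ is even.
\end{itemize}

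Part (2) of \cref{thm:general-polynomial} then yields $\delta_{\Qcal_\alpha}(n_j)\ge cn_j^\alpha$ along $n_j=|z_j|_X=H_j|q|_X\to\infty$. These are the matching lower-bound peaks.

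I expect no serious obstacle. The one point needing care is the length identity $|q^{H}|=H|q|$, which relies on the standing convention that $q$ was chosen cyclically reduced. Linearity of $\Phi_\chi$ is also what makes the trivial power-of-$q$ words efficient: for $D=1$ one only needs length linear in the $\chi$-value, unlike the $\Z^2$ case, which needs polyomino boundaries.
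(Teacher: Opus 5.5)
Your proposal is correct and follows the paper's proof essentially verbatim. The linear bound gives $\Phi_\chi(n)=O(n)$, so \cref{thm:general-log,thm:general-polynomial} apply with $D=1$. The hard values are then $1$-efficiently realized by $z_j=q^{H_j}$, because $q$ is cyclically reduced, so $|q^{H_j}|_X=H_j|q|_X$ and $\chi(q^{H_j})=H_j$.
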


\begin{proof}
The hypothesis says that $\Phi_\chi(n)=O(n)$, so \cref{thm:general-log} applies
with $D=1$.  For the polynomial construction, take $D=1$ in
\cref{thm:general-polynomial}.  The hard values are $1$-efficiently realized by
$z_j=q^{H_j}$, because $q$ is cyclically reduced and
\[
  |q^{H_j}|=H_j|q|,
  \qquad \chi(q^{H_j})=H_j.
\]
Thus the lower-bound peak assertion also follows.
\end{proof}

\begin{cor}[Word-hyperbolic groups]\label{cor:hyperbolic-scalar-invariant}
Let $G=\langle X\mid R\rangle$ be a finite presentation of a word-hyperbolic
group, and let $N=\ker(F(X)\to G)$.  Suppose that there exists a
conjugacy-invariant homomorphism $\chi:N\to\Z$ with infinite image.  After
rescaling $\chi$ so that $\chi(N)=\Z$, the following conclusions hold.
\begin{enumerate}
\item The logarithmic construction gives an infinite presentation of $G$ on the
same generating set $X$ with Dehn function $\fsimeq\log n$.
\item For every $0<\alpha<1$, the polynomial construction gives an infinite
presentation of $G$ on $X$ with a global $O(n^\alpha)$ Dehn-function upper
bound and matching $n^\alpha$-order lower-bound peaks along an infinite
sequence.
\end{enumerate}
\end{cor}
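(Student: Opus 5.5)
The plan is to reduce this statement to \cref{cor:linear-scalar-invariant}. The only thing to verify is a linear bound
\[
  |\chi(w)|\le C_0|w|_X+C_0\qquad (w\in N).
\]
Once this holds, both conclusions follow verbatim from that corollary. First rescale $\chi$ by the positive generator of its image so that $\chi(N)=\Z$; this preserves conjugacy-invariance and the homomorphism property.

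To get the linear bound I will use the linear Dehn function of the finite presentation. Since $G$ is word-hyperbolic, by Gromov's theorem the finite presentation $\langle X\mid R\rangle$ has linear Dehn function: there is $K$ with $\Area_R(w)\le K|w|_X$ for every freely reduced $w\in N$. So write
\[
  w=\prod_{i=1}^m g_i r_i^{\eps_i}g_i^{-1},\qquad m\le K|w|_X,
\]
with $r_i\in R$. Applying $\chi$ and using that it is a homomorphism on $N$ (each factor lies in $N$) and conjugacy-invariant gives
\[
  \chi(w)=\sum_{i=1}^m \eps_i\chi(r_i).
\]
Since $R$ is finite, $M=\max_{r\in R}|\chi(r)|$ is finite, and hence $|\chi(w)|\le KM|w|_X$. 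For an arbitrary word representing an element of $N$, pass to its free reduction, which only shortens it. Take $C_0=\max(KM,1)$.

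Then apply \cref{cor:linear-scalar-invariant}. Part (1) comes from \cref{thm:general-log} with $D=1$. Part (2) comes from \cref{thm:general-polynomial} with $D=1$, with the hard values efficiently realized by $z_j=q^{H_j}$, as in that corollary.

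The only substantive input is the linear isoperimetric inequality for hyperbolic groups with respect to an arbitrary finite presentation. This is standard, since linearity of the Dehn function is independent of the finite presentation; I will simply cite \cite{GromovHyperbolic,BridsonHaefliger}. Everything else is formal.
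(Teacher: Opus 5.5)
Your proposal is correct and follows essentially the same route as the paper. You use the linear Dehn function of the finite presentation and conjugacy-invariance to get $|\chi(w)|\le m\max_{r\in R}|\chi(r)|$ with $m$ linear in $|w|_X$, and then invoke \cref{cor:linear-scalar-invariant}, with the peaks realized by $z_j=q^{H_j}$.
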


\begin{proof}
It remains to check the linear boundedness hypothesis for $\chi$.  Since
$G$ is word-hyperbolic, the finite presentation $\langle X\mid R\rangle$ has a
linear Dehn function, up to the standard equivalence of Dehn functions.  After
changing constants, this gives an actual linear filling estimate for the chosen
finite presentation: there is a constant $C$ such that every word $w\in N$ admits an expression
\[
  w=\prod_{i=1}^m f_i r_i^{\eps_i} f_i^{-1}
\]
in $F(X)$, where $m\le C|w|_X+C$, each $f_i\in F(X)$, each $r_i\in R$, and
$\eps_i\in\{\pm1\}$.  Applying $\chi$ and using conjugacy-invariance gives
\[
  \chi(w)=\sum_{i=1}^m\eps_i\chi(r_i).
\]
Since $R$ is finite,
\[
  |\chi(w)|\le m\max_{r\in R}|\chi(r)|\le C'|w|_X+C'.
\]
Therefore $\chi$ is linearly bounded.  The conclusions follow from
\cref{cor:linear-scalar-invariant}.
\end{proof}

\begin{cor}[The quadratic-area case]\label{cor:quadratic-area-scalar-invariant}
Suppose that $\chi:N\to\Z$ is a conjugacy-invariant relation invariant with
$\chi(N)=\Z$ and
\[
  \Phi_\chi(n)=O(n^2).
\]
Then the following conclusions hold.
\begin{enumerate}
\item The logarithmic construction gives an infinite presentation of $G$ on $X$
with Dehn function $\fsimeq\log n$.
\item If the hard values in \cref{thm:general-polynomial} are
$2$-efficiently realized, then for every $0<\alpha<2$ the polynomial
construction gives a global $O(n^\alpha)$ upper bound and matching
$n^\alpha$-order lower-bound peaks.
\end{enumerate}
\end{cor}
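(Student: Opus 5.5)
This corollary is a direct specialization of \cref{thm:general-log} and \cref{thm:general-polynomial} to the exponent $D=2$, and the plan is simply to verify their hypotheses. The hypothesis $\Phi_\chi(n)=O(n^2)$ means there is a constant $C_0>0$ with $\Phi_\chi(n)\le C_0n^2$ for all sufficiently large $n$. Since $\Phi_\chi(n)$ is finite for every $n$ (there are finitely many words of length at most $n$), we can enlarge $C_0$ to absorb the finitely many small values of $n$. This gives $\Phi_\chi(n)\le C_0n^2+C_0$ for all $n\ge1$, which is exactly the polynomial growth hypothesis with $D=2$.

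For part (1), we apply \cref{thm:general-log} with $D=2$. The normalization $\chi(N)=\Z$ and the choice of a cyclically reduced $q\in N$ with $\chi(q)=1$ are the standing assumptions of that section. \Cref{prop:general-log-presents-G} shows that $\Qcal_{\log}$ presents $G$ on the same alphabet $X$. The theorem then gives $\delta_{\Qcal_{\log}}(n)\fsimeq\log n$. The lower bound in that proof uses only the words $q^{M_t}$ and is therefore insensitive to $D$.

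For part (2), fix $0<\alpha<2$ and apply \cref{thm:general-polynomial} with $D=2$, so that $\beta=\alpha/2\in(0,1)$. Its conclusion (1) gives the global bound $\delta_{\Qcal_\alpha}(n)\le Cn^\alpha+C$. Its conclusion (2) uses the assumed $2$-efficient realization of the hard values $H_j=s_{j+1}/2$ and yields lengths $n_j=|z_j|_X\to\infty$ with $\delta_{\Qcal_\alpha}(n_j)\ge cn_j^\alpha$. As in \cref{prop:general-log-presents-G}, $\Qcal_\alpha$ presents $G$ because $s_0=1$.

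There is no real obstacle; the only point needing care is converting the big-$O$ hypothesis into a bound valid for all $n\ge1$. Realizing the hard values efficiently, for example by signed area in $\Z^2$, is not part of this statement but is an assumption that later applications must verify.
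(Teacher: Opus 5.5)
Your proposal is correct and matches the paper's proof, which simply notes that the corollary is \cref{thm:general-log,thm:general-polynomial} with $D=2$. Your extra step, turning $\Phi_\chi(n)=O(n^2)$ into $\Phi_\chi(n)\le C_0n^2+C_0$ for all $n\ge1$ using finiteness of $\Phi_\chi(n)$, is a detail the paper leaves implicit, and you handle it correctly.
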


\begin{proof}
The corollary is \cref{thm:general-log,thm:general-polynomial} with $D=2$.
\end{proof}

\begin{rem}
Neither one-endedness nor any boundary-theoretic property is used in the scalar
relation-invariant construction.  Hyperbolicity is used in
\cref{cor:hyperbolic-scalar-invariant} only to guarantee the linear bound
$|\chi(w)|\le C|w|_X$.  More generally, the logarithmic construction only requires
polynomial growth of $\Phi_\chi$, while the polynomial-envelope construction
requires a polynomial upper bound for $\Phi_\chi$ and, for matching peaks, an
efficient-realization hypothesis at the hard scales.
\end{rem}

\section{Applications of the scalar construction}\label{sec:scalar-applications}

This section records the main concrete consequences of
\cref{sec:general-relation-invariant}.  The proofs are now short, because the
hard work has been isolated in the general scalar-invariant theorem.

\subsection{The signed-area invariant for \texorpdfstring{$\Z^2$}{Z2}}

Let
\[
  F=F(a,b),\qquad N=\ker(F\to \Z^2),
\]
where $a$ and $b$ map to the standard basis of $\Z^2$.  If $w\in N$, then $w$
labels a closed edge path in the standard square grid.  Its \emph{signed
algebraic area}, denoted $\Acal(w)$, is the sum of the coefficients in the
unique finite integral $2$-chain in the square grid whose cellular boundary is
that closed path.  Existence follows by assigning to each unit square the
winding number of the path around its interior; only finitely many coefficients
are nonzero.  Uniqueness follows from \cref{lem:no-cycles}.  Equivalently,
\[
  \Acal(w)=\sum_e \epsilon_e x_e,
\]
where the sum is over the vertical edges traversed by the path, $x_e$ is the
$x$-coordinate of the vertical edge, and $\epsilon_e=1$ for an upward traversal
and $\epsilon_e=-1$ for a downward traversal.  This normalization gives
\[
  \Acal([a,b])=1.
\]

\begin{rem}\label{rem:z2-signed-area-cell-count}
The same invariant can also be read directly from van Kampen diagrams over the
standard one-relator presentation
\[
  \langle a,b\mid [a,b]\rangle
\]
of $\Z^2$.  With the normalization above, let $N_+(\mathcal D)$ and $N_-(\mathcal D)$ denote,
respectively, the numbers of $2$-cells of a van Kampen diagram $\mathcal D$ whose
boundary orientations contribute $+1$ and $-1$ to the signed unit-square chain.
Thus a positive cell is one whose boundary is read counterclockwise as
$[a,b]=aba^{-1}b^{-1}$, up to cyclic permutation; a negative cell is one
whose boundary is read clockwise as $[a,b]$, equivalently counterclockwise
as $[a,b]^{-1}=bab^{-1}a^{-1}$, up to cyclic permutation.  Then, for every
such diagram $\mathcal D$ with boundary label $w$,
\[
  \Acal(w)=N_+(\mathcal D)-N_-(\mathcal D).
\]
The displayed formula gives the algebraic, rather than unsigned, count of the
square $2$-cells in the diagram.  In particular, signed area gives the elementary lower bound
$|\Acal(w)|\le \Area_{\langle a,b\mid [a,b]\rangle}(w)$.
\end{rem}

\begin{figure}[htbp]
\centering
\begin{tikzpicture}[scale=.55,>=Latex]
  \draw[step=1,gray!45,very thin] (-.2,-.2) grid (6.2,4.2);
  \fill[gray!18] (1,1) rectangle (5,3);
  \fill[gray!32] (2,1) rectangle (4,2);
  \draw[very thick,->] (1,1) -- (5,1);
  \draw[very thick,->] (5,1) -- (5,3);
  \draw[very thick,->] (5,3) -- (1,3);
  \draw[very thick,->] (1,3) -- (1,1);
  \draw[very thick,->] (2,1) -- (4,1);
  \draw[very thick,->] (4,1) -- (4,2);
  \draw[very thick,->] (4,2) -- (2,2);
  \draw[very thick,->] (2,2) -- (2,1);
  \node at (1.5,2.5) {$1$};
  \node at (2.5,1.5) {$2$};
  \node at (4.5,2.5) {$1$};
  \node[below] at (3,-.75) {signed area = sum of winding numbers over unit squares};
\end{tikzpicture}
\caption{The signed algebraic area of a closed lattice path is the sum of the coefficients of the finite cellular $2$-chain bounded by the path.  Equivalently, it is the sum of the winding numbers over the unit squares.}
\label{fig:signed-area}
\end{figure}

The map $\Acal:N\to\Z$ is a homomorphism and is invariant under conjugation in
$F$.  Additivity follows because concatenating two closed lattice paths adds
their cellular boundary $1$-chains, and hence adds the unique finite cellular
$2$-chains which they bound.  Conjugating a closed path only translates its
associated finite $2$-chain, and translation does not change the sum of the
coefficients.  Hence $\Acal$ is a conjugacy-invariant relation invariant in the
sense of \cref{sec:general-relation-invariant}.  Moreover,
\[
  \Phi_{\Acal}(n)=O(n^2),
\]
because a closed lattice path of length at most $n$ is contained in a square of
side length at most $n$ and therefore has algebraic area bounded by a constant
multiple of $n^2$.

\begin{cor}[Logarithmic presentations of $\Z^2$]\label{cor:z2-log-from-general}
The free abelian group of rank two has an infinite presentation on the
generators $a,b$ with Dehn function $\fsimeq\log n$.
\end{cor}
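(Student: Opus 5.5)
The plan is to apply \cref{thm:general-log} to the finite presentation $\Z^2=\langle a,b\mid [a,b]\rangle$ with $N=\ker(F(a,b)\to\Z^2)$ and scalar invariant $\chi=\Acal$, the signed algebraic area. The hypotheses to verify are the following.

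\begin{enumerate}
\item $\Acal:N\to\Z$ is a homomorphism invariant under conjugation by $F(a,b)$.
\item $\Acal(N)=\Z$.
\item $\Phi_{\Acal}(n)\le C_0n^D+C_0$ for some $D$.
\end{enumerate}

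Items (1) and (3), the latter with $D=2$, were established in the discussion immediately preceding the corollary. Additivity comes from uniqueness of bounded $2$-chains (\cref{lem:no-cycles}). Conjugation invariance holds because conjugating only translates the chain. The quadratic bound holds because a closed path of length $\le n$ lies in a box of side $\le n$, and each winding number is at most $n$ in absolute value; this gives even $O(n^3)$ trivially, and any polynomial bound suffices.

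For item (2), observe that $\Acal([a,b])=1$ with the stated normalization, so the image is all of $\Z$ and no rescaling is needed. Take $q=[a,b]=aba^{-1}b^{-1}$, which is cyclically reduced and satisfies $\chi(q)=1$.

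With these checks in place, \cref{thm:general-log} produces the presentation
\[
  \Qcal_{\log}=\langle a,b\mid S_{\log}\rangle,
\]
where $S_{\log}$ consists of all nontrivial cyclically reduced words $u\in N$ with $\Acal(u)=0$, together with the words $[a,b]^{2^k}$ for $k\ge0$. \Cref{prop:general-log-presents-G} shows that this presents $\Z^2$, and \cref{thm:general-log} gives $\delta_{\Qcal_{\log}}(n)\fsimeq\log n$.

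There is no real obstacle, since the corollary is a direct specialization. The only point requiring care is that $\Acal$ is well defined on elements of $N$ rather than on words. Freely reducing a word only inserts or deletes backtracking edges, which cancel in the boundary $1$-chain, so the bounded $2$-chain and hence $\Acal$ are unchanged.
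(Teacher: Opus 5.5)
Your proposal is correct and follows the paper's own proof: apply \cref{thm:general-log} to the signed-area invariant $\Acal$ with $q=[a,b]$, using $\Acal([a,b])=1$ and the polynomial bound on $\Phi_{\Acal}$. Your extra remarks on well-definedness under free reduction, and the observation that even an $O(n^3)$ bound would suffice, are harmless additions.
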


\begin{proof}
Apply \cref{thm:general-log} to the invariant $\Acal$ and
$q=[a,b]$.  The polynomial bound $\Phi_{\Acal}(n)=O(n^2)$ verifies the
hypothesis of that theorem.  Cyclic conjugates of relators have the same
$\Acal$-value, since $\Acal$ is conjugacy-invariant.
\end{proof}

\begin{cor}[Polynomial envelopes for $\Z^2$]\label{cor:z2-polynomial-from-general}
For each $0<\alpha<2$, let $\Pcal_\alpha$ be the infinite presentation of
$\Z^2$ on the generators $a,b$ constructed in the proof from the signed-area
invariant and the parameter $\alpha$.  Then:
\begin{enumerate}
\item The Dehn function $\delta_{\Pcal_\alpha}$ satisfies
\[
  \delta_{\Pcal_\alpha}(n)\le Cn^\alpha+C
\]
for all $n\ge1$ and some constant $C>0$.  Moreover, there exist a constant
$c>0$ and an infinite sequence $n_i\to\infty$ such that
\[
  \delta_{\Pcal_\alpha}(n_i)\ge cn_i^\alpha
\]
for all $i$.
\item If $0<\alpha_1<\alpha_2<2$, then the Dehn functions of
$\Pcal_{\alpha_1}$ and $\Pcal_{\alpha_2}$ are not finely equivalent.  In
fact,
\[
  \delta_{\Pcal_{\alpha_2}}\not\fpreceq
  \delta_{\Pcal_{\alpha_1}}.
\]
\end{enumerate}
\end{cor}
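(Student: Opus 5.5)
Part~(1) will follow from \cref{thm:general-polynomial} with $\chi=\Acal$, $q=[a,b]$ and $D=2$: $\Pcal_\alpha$ is the presentation $\Qcal_\alpha$ built from $\Acal$, and $\Phi_{\Acal}(n)=O(n^2)$ was checked above. This already gives the global bound $\delta_{\Pcal_\alpha}(n)\le Cn^\alpha+C$. For the matching peaks it remains to verify that the hard values $H_j=s_{j+1}/2$ are $2$-efficiently realized, in the sense of \cref{def:efficient-realization}.

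To realize a given positive integer $M$ by a short lattice loop, I would use a staircase polyomino. Put $L=\lfloor\sqrt M\rfloor$ and write $M=L^2+r$ with $0\le r\le 2L$. If $r\le L$, take the $L\times L$ square with a $r\times1$ row attached on top. If $r>L$, take an $L\times(L+1)$ rectangle with a row of length $r-L\le L$ attached on top. Let $z_M$ be the counterclockwise boundary word of this polyomino. Then $\Acal(z_M)=M$, since the winding number is $1$ on each unit square of the polyomino and $0$ elsewhere. The boundary length is at most $4L+6\le 10\sqrt M$ for $M\ge1$. Taking $z_j=z_{H_j}$ gives $|z_j|\le 10H_j^{1/2}$, so the efficient-realization hypothesis holds with $K=10$. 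Part~(2) of \cref{thm:general-polynomial} then yields $\delta_{\Pcal_\alpha}(n_i)\ge cn_i^\alpha$ along $n_i=|z_i|\to\infty$.

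For part~(2), suppose toward a contradiction that $\delta_{\Pcal_{\alpha_2}}\fpreceq\delta_{\Pcal_{\alpha_1}}$. Then for some $C\ge1$ and all $n$,
\[
\delta_{\Pcal_{\alpha_2}}(n)\le C\,\delta_{\Pcal_{\alpha_1}}(Cn+C)+C.
\]
Evaluate this at the peak lengths $n_i$ of $\Pcal_{\alpha_2}$ from part~(1). The left side is at least $cn_i^{\alpha_2}$. By the global upper bound for $\Pcal_{\alpha_1}$, the right side is at most $C\bigl(C'(Cn_i+C)^{\alpha_1}+C'\bigr)+C=O(n_i^{\alpha_1})$. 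Since $\alpha_2>\alpha_1$ and $n_i\to\infty$, this is a contradiction. Monotonicity of Dehn functions, which the comparison definitions require, is automatic from the definition as a maximum over words of length $\le n$. Non-equivalence then follows immediately.

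The only step that needs real care is the polyomino realization in part~(1). The points to check are that the boundary is a simple closed path, so the winding number is $0$ or $1$ and $\Acal$ equals the cell count, and that the length bound holds uniformly in $M$. Both are elementary for this staircase shape. One should also note that the words are freely reduced, which the proof of \cref{thm:general-polynomial} arranges anyway.
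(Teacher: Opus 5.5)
Your proof is correct and follows essentially the same route as the paper. You apply \cref{thm:general-polynomial} with $\chi=\Acal$, $q=[a,b]$ and $D=2$, and you verify $2$-efficient realization of the hard values $H_j$ using the boundary word of a near-square lattice polyomino. Your $L\times L$ or $L\times(L+1)$ block plus a partial top row is essentially the paper's $p_j\times q_j$ rectangle plus an $r_j\times 1$ strip. Part~(2) then follows, as in the paper, by evaluating the assumed fine domination along the $\alpha_2$-peak sequence against the global $O(n^{\alpha_1})$ upper bound.
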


\begin{proof}
We use \cref{thm:general-polynomial} with $D=2$.  For each $0<\alpha<2$, let
$\Pcal_\alpha=\Qcal_\alpha$ be the presentation supplied by that theorem for
$\chi=\Acal$ and $q=[a,b]$.  It remains to verify the efficient-realization
hypothesis for the hard values $H_j$.  Put
$p_j=\lfloor\sqrt{H_j}\rfloor$ and write
\[
  H_j=p_jq_j+r_j,
  \qquad 0\le r_j<p_j.
\]
The union of the rectangle $[0,p_j]\times[0,q_j]$ with, if $r_j>0$, the strip
$[0,r_j]\times[q_j,q_j+1]$ is a simply connected lattice polyomino of area
$H_j$ and perimeter at most $C\sqrt{H_j}$, for a uniform constant $C$.  Let $z_j$
be its positively oriented boundary word.  Then
\[
  \Acal(z_j)=H_j,
  \qquad |z_j|\le C\sqrt{H_j}.
\]
Thus the hard values are $2$-efficiently realized, and
\cref{thm:general-polynomial} proves the asserted upper bound and lower-bound
peaks in part~(1).

It remains to prove part~(2).  Let $0<\alpha_1<\alpha_2<2$.  By part~(1),
there is a constant $C_1>0$ such that
\[
  \delta_{\Pcal_{\alpha_1}}(n)\le C_1n^{\alpha_1}+C_1
\]
for all $n\ge1$, and there exist a constant $c_2>0$ and an infinite sequence
$n_j\to\infty$ such that
\[
  \delta_{\Pcal_{\alpha_2}}(n_j)\ge c_2n_j^{\alpha_2}.
\]
If $\delta_{\Pcal_{\alpha_2}}\fpreceq\delta_{\Pcal_{\alpha_1}}$, then for
some constant $K\ge1$ one would have
\[
  \delta_{\Pcal_{\alpha_2}}(n)\le
  K\delta_{\Pcal_{\alpha_1}}(Kn+K)+K
\]
for all $n$.  Evaluating this inequality at the sequence $n_j$ gives
\[
  c_2n_j^{\alpha_2}
  \le K\bigl(C_1(Kn_j+K)^{\alpha_1}+C_1\bigr)+K,
\]
which is impossible as $j\to\infty$, since $\alpha_2>\alpha_1$.  Therefore
$\delta_{\Pcal_{\alpha_2}}\not\fpreceq\delta_{\Pcal_{\alpha_1}}$, and the
Dehn functions are not finely equivalent.
\end{proof}

\begin{cor}[Free abelian groups of higher rank]\label{cor:zd-from-general}
Let $d\ge2$.  The free abelian group $\Z^d$, with its standard finite
presentation on generators $x_1,\ldots,x_d$, satisfies the following
conclusions.
\begin{enumerate}
\item It has an infinite presentation on the same generators with Dehn function
$\fsimeq\log n$.
\item For every $0<\alpha<2$, it has an infinite presentation on the same
generators whose Dehn function is globally bounded above by $Cn^\alpha+C$ and
has matching $n^\alpha$-order lower-bound peaks along an infinite sequence.
\end{enumerate}
\end{cor}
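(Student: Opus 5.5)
The plan is to reduce to the $\Z^2$ case by building a conjugacy-invariant relation invariant on $N_d=\ker(F(x_1,\ldots,x_d)\to\Z^d)$ that restricts to signed area on the first two coordinates. Let $\pi:F(x_1,\ldots,x_d)\to F(a,b)$ be the homomorphism with $x_1\mapsto a$, $x_2\mapsto b$, and $x_i\mapsto 1$ for $i\ge3$. Then $\pi(N_d)\subseteq N=\ker(F(a,b)\to\Z^2)$, since exponent sums in $x_1,x_2$ vanish on $N_d$. Define
\[
  \chi=\Acal\circ\pi:N_d\to\Z .
\]
This is a homomorphism. It is conjugacy-invariant because $\pi(fuf^{-1})=\pi(f)\pi(u)\pi(f)^{-1}$ and $\Acal$ is conjugacy-invariant on $N$. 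Moreover $\chi([x_1,x_2])=\Acal([a,b])=1$, so $\chi(N_d)=\Z$, and we take $q=[x_1,x_2]$, which is cyclically reduced.

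For the growth bound, note that $|\pi(w)|_{\{a,b\}}\le|w|_X$, because deleting letters does not increase the reduced length. Therefore
\[
  \Phi_\chi(n)\le\Phi_{\Acal}(n)=O(n^2).
\]
\cref{thm:general-log} with $D=2$ then gives part (1) directly. For part (2), \cref{thm:general-polynomial}(1) with $D=2$ gives the global upper bound $Cn^\alpha+C$ for each $0<\alpha<2$.

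The only remaining point, which I expect to be the main step although it is easy, is $2$-efficient realization of the hard values $H_j$. We take the polyomino boundary words $z_j\in F(a,b)$ from the proof of \cref{cor:z2-polynomial-from-general} and substitute $a\mapsto x_1$, $b\mapsto x_2$. Call the resulting words $z_j'$. They lie in $N_d$ because they are closed paths in the $x_1x_2$-plane, their length is unchanged, so $|z_j'|\le C\sqrt{H_j}$, and $\chi(z_j')=\Acal(z_j)=H_j$, since $\pi$ undoes the substitution. Then \cref{thm:general-polynomial}(2) gives the lower-bound peaks $\delta(n_j)\ge cn_j^\alpha$.

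Finally, both constructed relator sets consist of cyclically reduced words on $x_1,\ldots,x_d$. By \cref{prop:general-log-presents-G} and its analogue for $\Qcal_\alpha$, these presentations present $\Z^d$ on the same generators.
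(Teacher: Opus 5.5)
Your proposal is correct and is essentially the paper's own argument: compose signed area with the projection killing $x_3,\ldots,x_d$, use that this projection does not increase word length to get $\Phi(n)=O(n^2)$, take $q=[x_1,x_2]$ with value $1$, and reuse the planar polyomino boundary words in $x_1,x_2$ for $2$-efficient realization before invoking \cref{thm:general-log,thm:general-polynomial}. You also spell out a few checks that the paper leaves implicit: that $\pi(N_d)\subseteq N$, that the composite is conjugacy-invariant, and that the resulting presentations define $\Z^d$.
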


\begin{proof}
Let $\pi:F(x_1,\ldots,x_d)\to F(x_1,x_2)$ be the homomorphism which fixes
$x_1,x_2$ and kills $x_3,\ldots,x_d$.  If $w$ lies in the relation subgroup for
$\Z^d$, then $\pi(w)$ lies in the relation subgroup for $\Z^2$.  Composing
$\pi$ with the signed-area invariant for $F(x_1,x_2)\to\Z^2$ gives a scalar
invariant on the relation subgroup for $\Z^d$.  The projection $\pi$ does not
increase word length, so this invariant satisfies the same quadratic bound
$\Phi(n)=O(n^2)$.  It takes the value $1$ on $[x_1,x_2]$.  Moreover, the same
planar polyomino boundary words used in the proof of
\cref{cor:z2-polynomial-from-general} involve only $x_1$ and $x_2$, and
therefore $2$-efficiently realize the hard values in the relation subgroup for
$\Z^d$.  Thus the proofs of
\cref{cor:z2-log-from-general,cor:z2-polynomial-from-general} apply unchanged.
\end{proof}

\begin{rem}
The corollaries in this subsection recover the scalar-area examples for
$\Z^2$.  The same argument also gives the higher-rank extensions stated in
\cref{cor:zd-from-general}.
The earlier dyadic-strip presentation with Dehn function of order $n\log n$ is
not an instance of this scalar construction; it uses a row-wise relation-module
phenomenon rather than only total signed area.
\end{rem}

\subsection{Torsion-free small-cancellation groups}\label{subsec:small-cancellation-examples}

This subsection supplies scalar relation invariants from small-cancellation asphericity.
Let $X$ be finite, and let $R\subseteq F(X)$ be a nonempty finite symmetrized set of nontrivial cyclically reduced words; thus $R$ is closed under cyclic permutation and inversion.  Assume that $R$ satisfies the small-cancellation condition $C'(1/6)$ and that no element of $R$ is a proper power in $F(X)$.
Put
\[
  F=F(X),\qquad N=\ncl_F(R),\qquad G=F/N.
\]
By the standard torsion theorem for small-cancellation groups, these hypotheses
also imply that $G$ is torsion-free.
Choose a finite subset
\[
  R_0=\{r_1,\ldots,r_k\}\subseteq R
\]
which contains exactly one representative from each equivalence class of
members of $R$ under cyclic permutation and inversion.  Thus the symmetrization
of $R_0$ is $R$.  The set $R_0$ is nonredundant in the sense that no two
distinct elements of $R_0$ are conjugate in $F$ to one another or to one
another's inverses.

By the standard asphericity theorem, equivalently the identity theorem, for
$C'(1/6)$ presentations with no proper-power relators, see for example
Lyndon--Schupp \cite[Chapter V]{LyndonSchupp}, the presentation complex of the
finite presentation $\langle X\mid R_0\rangle$ is aspherical.  Here $R_0$ is the
actual finite relator set used in the presentation complex, while $R$ is its
symmetrization.  Equivalently, there are no nontrivial identities among the
relators in $R_0$, apart from the formal Peiffer identities.  More concretely, the cellular chain complex of the universal cover
gives the usual relation-module exact sequence
\[
  0\longrightarrow \pi_2(K)\longrightarrow
  \bigoplus_{i=1}^k \Z G e_i\longrightarrow N/[N,N]\longrightarrow 0,
\]
where $K$ is the presentation complex and $e_i$ corresponds to the relator
$r_i$.  Since $K$ is aspherical, $\pi_2(K)=0$.  Hence the relation module
$N/[N,N]$ is the free $\Z G$-module with basis the images of
$r_1,\ldots,r_k$.  Passing to $G$-coinvariants gives
\[
  N/[F,N]\cong \Z^k,
\]
with basis represented by the images of the relators $r_1,\ldots,r_k$.
Consequently the assignment
\[
  r_i\mapsto 1,
  \qquad i=1,\ldots,k,
\]
extends uniquely to a conjugacy-invariant homomorphism
\[
  \chi_R:N\to\Z.
\]
The image of $\chi_R$ is infinite, since $\chi_R(r_i)=1$ for each $i$.

\begin{rem}\label{rem:small-cancellation-cell-count}
The invariant $\chi_R$ has a diagrammatic interpretation analogous to signed
area.  Let $\mathcal D$ be a van Kampen diagram over the finite presentation
$\langle X\mid R_0\rangle$ with boundary word $w\in N$.  Let
$N_i^+(\mathcal D)$ be the number of $2$-cells whose boundary label is a cyclic
permutation of $r_i$ with positive orientation, and let $N_i^-(\mathcal D)$ be
the number of $2$-cells whose boundary label is a cyclic permutation of
$r_i^{-1}$ with positive orientation, equivalently a cyclic permutation of
$r_i$ with the opposite orientation.  Then
\[
  \chi_R(w)=\sum_{i=1}^k\bigl(N_i^+(\mathcal D)-N_i^-(\mathcal D)\bigr).
\]
Indeed, the boundary relation associated to $\mathcal D$ writes $w$ in $F$ as a
product of conjugates of the relators $r_i^{\pm1}$; applying the
conjugacy-invariant homomorphism $\chi_R$ gives the displayed formula.
\end{rem}

\begin{cor}[Small-cancellation examples]\label{cor:small-cancellation-from-general}
Let $X$ be finite, and let $R\subseteq F(X)$ be a nonempty finite symmetrized
$C'(1/6)$ set of nontrivial cyclically reduced words, none of which is a proper
power.  Put
\[
  G=\langle X\mid R\rangle.
\]
Then the following conclusions hold.
\begin{enumerate}
\item The group $G$ has an infinite presentation on the generating set $X$ with
Dehn function $\fsimeq\log n$.
\item For every $0<\alpha<1$, the group $G$ has an infinite presentation on
$X$ whose Dehn function is globally bounded above by $Cn^\alpha+C$ and has
matching $n^\alpha$-order lower-bound peaks along an infinite sequence.
\end{enumerate}
\end{cor}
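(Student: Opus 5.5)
The plan is to reduce both conclusions to \cref{cor:linear-scalar-invariant}, applied to the invariant $\chi_R$ constructed just above. Two things must be checked: that $\chi_R$ can be normalized to have image exactly $\Z$, and that $\chi_R$ is linearly bounded on $N$.

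\textbf{Normalization.} Since $\chi_R(r_1)=1$, the image of $\chi_R$ is already $\Z$, so no rescaling is needed. For $q$ we take the cyclically reduced word $r_1$ itself, which satisfies $\chi_R(q)=1$ as required by the constructions of \cref{sec:general-relation-invariant}.

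\textbf{Linear bound.} A finite $C'(1/6)$ presentation satisfies Dehn's algorithm, so its Dehn function is linear. Concretely, by Greendlinger's lemma every nontrivial freely reduced $w\in N$ contains more than half of some relator. Replacing that subword by the shorter complementary part strictly decreases length, at the cost of one relator cell. Iterating, $w$ is a product of at most $|w|_X$ conjugates of elements of $R_0^{\pm1}$, possibly after cyclic permutation. The word-hyperbolic argument of \cref{cor:hyperbolic-scalar-invariant} could be invoked equally well.

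Applying $\chi_R$ to such an expression and using conjugacy-invariance, together with $\chi_R(r_i^{\pm1})=\pm1$ for each cyclic permutation, gives
\[
  |\chi_R(w)|\le |w|_X.
\]
Since cyclic permutations of $r_i$ are conjugate to $r_i$, \cref{rem:small-cancellation-cell-count} gives the same count directly. Hence $\Phi_{\chi_R}(n)\le n$.

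\textbf{Conclusion.} With these two inputs, \cref{cor:linear-scalar-invariant} yields both parts. The logarithmic presentation comes from \cref{thm:general-log} with $D=1$. The $n^\alpha$ envelopes for $0<\alpha<1$ come from \cref{thm:general-polynomial} with $D=1$, where the hard values are $1$-efficiently realized by the words $q^{H_j}$, which have length $H_j|q|$ because $q$ is cyclically reduced. Both presentations have generating set $X$ and present $G$ by \cref{prop:general-log-presents-G} and its analogue for $\Qcal_\alpha$.

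\textbf{Main obstacle.} The only nonroutine input is the structural one: the asphericity theorem is what makes $\chi_R$ well defined on $N/[F,N]$ with $\chi_R(r_i)=1$. That was already established before the statement, and it is where the hypothesis that no relator is a proper power is needed. Given this, the remaining work is the routine linear Dehn-function estimate above.
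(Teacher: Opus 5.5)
Your proposal is correct and follows essentially the paper's route: the paper applies \cref{cor:hyperbolic-scalar-invariant}, which reduces to \cref{cor:linear-scalar-invariant}, using word-hyperbolicity of $C'(1/6)$ groups, and it realizes the hard values by powers $r_i^M$ of a relator, exactly as you do with $q=r_1$. Your Greendlinger/Dehn-algorithm step only makes the linear bound explicit, giving $\Phi_{\chi_R}(n)\le n$, where the paper gets an unspecified constant from hyperbolicity.
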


\begin{proof}
The invariant $\chi_R:N\to\Z$ constructed above has infinite image.  A finite
$C'(1/6)$ presentation defines a word-hyperbolic group.  Thus \cref{cor:hyperbolic-scalar-invariant} applies to $\chi_R$ and gives the logarithmic conclusion and the polynomial-envelope conclusion for every $0<\alpha<1$.

For the lower-bound peaks in the polynomial-envelope conclusion, the efficient
realization condition is automatic with $D=1$: if $r_i\in R_0$, then
$\chi_R(r_i)=1$, and the word $r_i^M$ has length $M|r_i|$ and
$\chi_R(r_i^M)=M$ for every $M\ge1$.
\end{proof}

\begin{rem}\label{rem:surface-groups-small-cancellation}
The standard presentation of the closed orientable surface group of genus $g\ge2$ is included in \cref{cor:small-cancellation-from-general}.  Indeed, the
relator
\[
  \rho_g=[a_1,b_1]\cdots [a_g,b_g]
\]
has length $4g$, is not a proper power, and its symmetrized set satisfies
$C'(1/6)$.  Indeed, in the cyclic word $\rho_g$ and its inverse no reduced
subword of length two occurs in two distinct cyclic positions among the
symmetrized relators.  Hence every piece has length at most one, while
$1<4g/6$ for $g\ge2$.  Thus the earlier surface-group examples are special
cases of the small-cancellation corollary.
\end{rem}

\subsection{A homological source of examples}

Combining \cref{cor:existence-scalar-invariant} with
\cref{cor:hyperbolic-scalar-invariant} gives the following compact criterion.

\begin{cor}\label{cor:hyperbolic-homological-source}
Let $G=\langle X\mid R\rangle$ be a finite presentation of a word-hyperbolic
group.  If
\[
  \operatorname{rank}_{\Z}H_2(G;\Z)+|X|-\operatorname{rank}_{\Z}H_1(G;\Z)>0,
\]
then the following conclusions hold.
\begin{enumerate}
\item The group $G$ has an infinite presentation on the generating set $X$ with
Dehn function $\fsimeq\log n$.
\item For every $0<\alpha<1$, the group $G$ has an infinite presentation on $X$
with a global $O(n^\alpha)$ Dehn-function upper bound and matching
$n^\alpha$-order lower-bound peaks along an infinite sequence.
\end{enumerate}
\end{cor}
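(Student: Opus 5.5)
This is a direct combination of \cref{cor:existence-scalar-invariant} and \cref{cor:hyperbolic-scalar-invariant}, with a short check that the hypotheses of the latter are met.

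First, apply \cref{cor:existence-scalar-invariant} to the finite presentation $G=\langle X\mid R\rangle$ with $N=\ker(F(X)\to G)$. The displayed rank hypothesis says exactly that $N/[F(X),N]$ has positive torsion-free rank. That corollary then gives a nonzero homomorphism $N/[F(X),N]\to\Z$. Composing with the quotient map $N\to N/[F(X),N]$ yields a conjugacy-invariant homomorphism $\chi:N\to\Z$ with infinite image.

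Next, rescale. The image $\chi(N)$ is a nonzero subgroup $d\Z$ of $\Z$. Replacing $\chi$ by $\chi/d$ gives a conjugacy-invariant relation invariant with $\chi(N)=\Z$. Choose $q\in N$ with $\chi(q)=1$ and take $q$ cyclically reduced, as in \cref{sec:general-relation-invariant}.

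Now invoke \cref{cor:hyperbolic-scalar-invariant}. Its hypotheses are a finite presentation of a word-hyperbolic group together with a conjugacy-invariant $\chi:N\to\Z$ of infinite image, and both hold. Inside that corollary, hyperbolicity supplies the linear filling estimate for $\langle X\mid R\rangle$, which gives $|\chi(w)|\le C'|w|_X+C'$. Then \cref{cor:linear-scalar-invariant} produces two things:
\begin{enumerate}
\item the logarithmic presentation $\Qcal_{\log}$ on $X$, with $\delta\fsimeq\log n$;
\item for each $0<\alpha<1$, the presentation $\Qcal_\alpha$ on $X$, with a global $O(n^\alpha)$ bound and lower-bound peaks. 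The peaks come from the automatic $1$-efficient realization by the words $q^{H_j}$.
\end{enumerate}
These are conclusions (1) and (2) of the statement.

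There is no genuine obstacle. The only points needing care are the following.
\begin{enumerate}
\item The rank of $N/[F(X),N]$ does not depend on which finite relator set $R$ is used, since the Hopf sequence involves only $G$ and $|X|$.
\item The rescaled $\chi$ is still conjugacy-invariant.
\item Both new presentations still present $G$. This follows from the argument of \cref{prop:general-log-presents-G}, which uses only $\chi(q)=1$ with $q$ a defining relator.
\end{enumerate}
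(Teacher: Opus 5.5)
Your proposal is correct and matches the paper's proof: \cref{cor:existence-scalar-invariant} turns the rank inequality into a conjugacy-invariant $\chi:N\to\Z$ with infinite image, and \cref{cor:hyperbolic-scalar-invariant} (through the linear bound on $\chi$ and \cref{cor:linear-scalar-invariant}) then gives both conclusions. Your side remarks are harmless; the first is even more immediate than you state, since $N=\ker(F(X)\to G)$ depends only on the map $F(X)\to G$ and not on the choice of $R$.
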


\begin{proof}
The displayed inequality guarantees, by
\cref{cor:existence-scalar-invariant}, a conjugacy-invariant relation invariant
$N\to\Z$ with infinite image.  The group is word-hyperbolic, so
\cref{cor:hyperbolic-scalar-invariant} applies.
\end{proof}

\section{A redundant-generator construction for arbitrary finitely generated groups}\label{sec:arbitrary-fg}

This section proves \cref{thm:intro-arbitrary-fg-continuum}.  The construction is deliberately redundant: add a generator representing the identity and use its exponent sum as a scalar coordinate.  Varying the infinite relator set in this direction reduces filling to the signed coin-counting functions from \cref{lem:general-arithmetic}.

Let $G$ be a finitely generated group and let $Y$ be a finite generating set for
$G$.  Let $t$ be a new letter, put
\[
  X=Y\sqcup\{t\},
\]
and define an epimorphism
\[
  \pi:F(X)\to G
\]
which sends each element of $Y$ to the corresponding generator of $G$ and sends
$t$ to $1\in G$.  Let
\[
  N=\ker \pi.
\]
Let
\[
  \chi:F(X)\to\Z
\]
be the exponent-sum homomorphism in the letter $t$.  We use the same symbol
$\chi$ for its restriction to $N$.  Then $\chi:N\to\Z$ is a conjugacy-invariant
epimorphism, because $t\in N$ and $\chi(t)=1$.

For a subset $S\subseteq \N$ containing $1$, define
\[
  \tau_S(m)=\min\left\{r:\ m=\sum_{i=1}^r \varepsilon_i s_i,
  \ \varepsilon_i\in\{\pm1\},\ s_i\in S\right\}
\]
for $m\in\Z$, with $\tau_S(0)=0$, and put
\[
  T_S(n)=\max\{\tau_S(m): |m|\le n\}.
\]
Thus $T_S$ is the word metric ball-radius function for the group $\Z$ with
respect to the symmetric generating set $\pm S$.

\begin{lem}\label{lem:dummy-generator-coin-comparison}
Let $S\subseteq\N$ contain $1$.  Let $\Rcal_S$ be the set consisting of all
nontrivial cyclically reduced words $u\in N$ with $\chi(u)=0$, together with the
words $t^s$, $s\in S$.  Put
\[
  \Pcal_S=\langle X\mid \Rcal_S\rangle.
\]
Then $\Pcal_S$ presents $G$.  Moreover, for all $n\ge1$,
\[
  T_S(n)\le \delta_{\Pcal_S}(n)\le T_S(n)+1.
\]
In particular, $\delta_{\Pcal_S}\fsimeq T_S$.
\end{lem}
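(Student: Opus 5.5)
The plan is to repeat the proof of \cref{prop:general-log-presents-G} and \cref{thm:general-log}, with $q=t$ and the coin set $S$ in place of the powers of two. Throughout, we use that $t$ is cyclically reduced, $t\in N$, and $\chi(t)=1$.

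\emph{Presentation.} Every relator in $\Rcal_S$ lies in $N$, since $t\in N$ and the other relators were chosen in $N$. So the normal closure of $\Rcal_S$ is contained in $N$. Conversely, take $w\in N$ and put $A=\chi(w)$. Then $wt^{-A}\in N\cap\ker\chi$, so its cyclically reduced conjugate is either empty or a relator. Also $t=t^1$ is a relator because $1\in S$. Hence $w=(wt^{-A})t^A$ lies in the normal closure of $\Rcal_S$, and $\Pcal_S$ presents $F(X)/N\cong G$.

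\emph{Upper bound.} Let $w$ be a freely reduced null word with $|w|_X\le n$, and put $A=\chi(w)$. Since each letter contributes at most $\pm1$ to the $t$-exponent sum, $|A|\le n$. By \cref{rem:one-cell-whisker}, filling $wt^{-A}$ costs at most one cell, and this reduces the problem to filling $t^A$. Choose a signed expression $A=\sum_{i=1}^r\eps_i s_i$ with $s_i\in S$ and $r=\tau_S(A)$. Then $t^A$ is the product in $F(X)$ of the words $t^{\eps_i s_i}$, each of which is one relator cell. Gluing these along a common basepoint fills $t^A$ with $\tau_S(A)\le T_S(n)$ cells. Therefore $\delta_{\Pcal_S}(n)\le T_S(n)+1$.

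\emph{Lower bound.} Take $m$ with $|m|\le n$ and $\tau_S(m)=T_S(n)$. The word $t^m$ is freely reduced, null, and has length $|m|\le n$. Any van Kampen diagram for it with $\nu$ cells gives $t^m=\prod_{i=1}^{\nu} g_ir_i^{\eps_i}g_i^{-1}$ in $F(X)$. Apply $\chi$, which is a homomorphism on all of $F(X)$ and is conjugation invariant. Cells from $\ker\chi$ contribute $0$, and cells of type $t^s$ contribute $\pm s$. This writes $m$ as a signed sum of at most $\nu$ elements of $S$, so $\nu\ge\tau_S(m)=T_S(n)$. Hence $\delta_{\Pcal_S}(n)\ge T_S(n)$.

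\emph{Fine equivalence.} The two inequalities give $\delta_{\Pcal_S}\fpreceq T_S$ with constant $C=1$. In the other direction, $T_S(n)\le\delta_{\Pcal_S}(n)$ gives $T_S\fpreceq\delta_{\Pcal_S}$ directly. Both functions are monotone nondecreasing, as the definition of $\fpreceq$ requires.

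The only delicate point is the bound $|\chi(w)|\le|w|_X$, which uses that $\chi$ is the exponent sum of a single letter. Beyond that, the argument is a direct transcription of \cref{thm:general-log}.
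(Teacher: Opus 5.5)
Your proposal is correct and follows essentially the same route as the paper's proof. It checks the presentation via $w=(wt^{-m})t^m$, gets the upper bound from one zero-$\chi$ cell plus a shortest signed $S$-expression for $t^m$, and gets the lower bound by applying the exponent-sum homomorphism $\chi$ to the boundary relation for $t^m$; your explicit check of the two fine-comparison inequalities, using monotonicity of $T_S$, is a small addition the paper leaves implicit.
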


\begin{proof}
First we show that $\Pcal_S$ presents $G$.  Every relator in $\Rcal_S$ belongs
to $N$, so there is a natural epimorphism $G(\Pcal_S)\to G$.  Conversely, let
$w\in N$.  Put $m=\chi(w)$.  Then
\[
  \chi(wt^{-m})=0,
\]
and $wt^{-m}\in N$.  Hence, after cyclic reduction if necessary, $wt^{-m}$ is
either trivial or conjugate to one of the zero-$\chi$ relators in $\Rcal_S$.
Since $1\in S$, the word $t$ is also a defining relator of $\Pcal_S$.  Therefore
$w$ is trivial in $G(\Pcal_S)$.  Thus the kernel of $F(X)\to G(\Pcal_S)$ is
exactly $N$, and $G(\Pcal_S)\cong G$.

Let $w\in N$ be freely reduced and suppose $|w|_X\le n$.  Put $m=\chi(w)$.  Then
$|m|\le n$.  As above, the word $wt^{-m}$ has $\chi$-value zero.  By
\cref{rem:one-cell-whisker}, it can be filled with at most one zero-$\chi$
relator cell.  The remaining word $t^m$ can be filled using a signed coin
expression for $m$: if
\[
  m=\sum_{i=1}^r\varepsilon_i s_i,
  \qquad \varepsilon_i\in\{\pm1\},\quad s_i\in S,
\]
then $t^m$ is filled using $r$ cells with boundary labels $t^{s_i}$ or
$t^{-s_i}$.  Taking a shortest signed expression gives
\[
  \Area_{\Pcal_S}(w)\le 1+\tau_S(m)\le 1+T_S(n).
\]
Thus $\delta_{\Pcal_S}(n)\le T_S(n)+1$.

For the lower bound, take an integer $m$ with $|m|\le n$ and consider the word
$t^m$.  It has length $|m|\le n$ and is null-homotopic over $\Pcal_S$.  Let
$\mathcal D$ be a van Kampen diagram over $\Pcal_S$ with boundary label $t^m$
and with $\nu$ two-cells.  The usual boundary relation in $F(X)$ expresses
$t^m$ as a product of conjugates of defining relators and their inverses.
Applying the exponent-sum homomorphism $\chi$ gives an expression of $m$ as a
signed sum of the elements of $S$, with at most $\nu$ nonzero summands: the
zero-$\chi$ relators contribute $0$, while a cell labelled by $t^s$ or $t^{-s}$
contributes $s$ or $-s$.  Hence
\[
  \nu\ge \tau_S(m).
\]
Since $\mathcal D$ was arbitrary,
\[
  \Area_{\Pcal_S}(t^m)\ge \tau_S(m).
\]
Taking the maximum over all $|m|\le n$ gives
\[
  \delta_{\Pcal_S}(n)\ge T_S(n).
\]
The two inequalities prove the lemma.
\end{proof}

\begin{thm}[A universal continuum construction]\label{thm:arbitrary-fg-continuum}
Let $G$ be any finitely generated group.  Then there exists a finite generating
alphabet $X$ mapping onto $G$ such that varying the relator set in presentations
$\langle X\mid\Rcal\rangle$ of $G$ gives continuum many pairwise distinct fine
equivalence classes of Dehn functions.

More precisely, for every $0<\alpha<1$ there is an infinite presentation
\[
  \Pcal_\alpha=\langle X\mid\Rcal_\alpha\rangle
\]
of $G$ such that
\[
  \delta_{\Pcal_\alpha}(n)\le C_\alpha n^\alpha+C_\alpha
\]
for all $n\ge1$, and there are constants $c_\alpha>0$ and an infinite sequence
$n_j\to\infty$ such that
\[
  \delta_{\Pcal_\alpha}(n_j)\ge c_\alpha n_j^\alpha
\]
for all $j$.  If $0<\alpha<\beta<1$, then
$\delta_{\Pcal_\beta}\not\fpreceq \delta_{\Pcal_\alpha}$.  In particular the
fine equivalence classes obtained in this way are pairwise distinct for
continuum many values of $\alpha$.
\end{thm}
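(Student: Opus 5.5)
We take $X=Y\sqcup\{t\}$ with $t\mapsto 1$, as in the setup of \cref{sec:arbitrary-fg}. For $0<\alpha<1$ we use the divisibility sequence $1=s_0\mid s_1\mid\cdots$ constructed before \cref{lem:general-arithmetic}, with $\beta$ there replaced by $\alpha$; that is, we apply the construction with $D=1$, so the exponent produced is $\alpha$ itself. We put $S=S_\alpha=\{s_j\}$ and $\Pcal_\alpha=\Pcal_S$ as in \cref{lem:dummy-generator-coin-comparison}. That lemma shows that $\Pcal_\alpha$ presents $G$, since $1=s_0\in S$, and that
\[
  T_S(n)\le\delta_{\Pcal_\alpha}(n)\le T_S(n)+1 .
\]
So the whole problem reduces to estimating $T_S$.

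For the upper bound, \cref{lem:general-arithmetic} gives $\tau_S(m)\le C|m|^\alpha+C$ for all $m$. Taking the maximum over $|m|\le n$ gives $T_S(n)\le Cn^\alpha+C$, and therefore $\delta_{\Pcal_\alpha}(n)\le (C+1)n^\alpha+C+1$.

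For the peaks, let $n_j=H_j=s_{j+1}/2$. This is an integer because $m_j$ is even, and $n_j\to\infty$. The second part of \cref{lem:general-arithmetic} gives
\[
  \delta_{\Pcal_\alpha}(n_j)\ge T_S(n_j)\ge\tau_S(H_j)\ge cH_j^\alpha=c\,n_j^\alpha .
\]
Equivalently, the witness word is $t^{H_j}$, which has length exactly $n_j$; this is the efficient-realization phenomenon with $D=1$.

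For distinctness, fix $0<\alpha<\beta<1$ and suppose $\delta_{\Pcal_\beta}\fpreceq\delta_{\Pcal_\alpha}$ with constant $K$. Evaluating along the peak sequence $n_j$ for $\beta$ gives
\[
  c\,n_j^\beta\le K\bigl(C(Kn_j+K)^\alpha+C\bigr)+K ,
\]
which fails for large $j$ because $\beta>\alpha$. This is the same argument as in part (2) of \cref{cor:z2-polynomial-from-general}. Hence the map $\alpha\mapsto$ (fine class of $\delta_{\Pcal_\alpha}$) is injective on $(0,1)$, which yields continuum many classes.

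The only point needing care is applying \cref{lem:general-arithmetic} with the exponent equal to $\alpha$ rather than $\alpha/D$. The lemma is purely arithmetic and valid for any $\beta\in(0,1)$, so the choice $D=1$ is legitimate. Beyond that, one only needs to observe that $\chi(t^m)=m$ and $|t^m|=|m|$; this is already built into \cref{lem:dummy-generator-coin-comparison}.
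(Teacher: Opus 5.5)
Your proposal is correct and is essentially the paper's own proof. It uses the same redundant generator $t\mapsto 1$, applies \cref{lem:general-arithmetic} directly with $\beta=\alpha$, and uses the sandwich $T_S\le\delta_{\Pcal_\alpha}\le T_S+1$ from \cref{lem:dummy-generator-coin-comparison}. The peaks come from the words $t^{H_j}$ at $n_j=H_j$, and distinctness follows from the same contradiction along the $\beta$-peaks. Your explicit check that $H_j=s_{j+1}/2$ is an integer, because $m_j$ is even, is a detail the paper leaves implicit.
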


\begin{proof}
Fix a finite generating set $Y$ of $G$ and form the redundant generating
alphabet $X=Y\sqcup\{t\}$ as above, where the new letter $t$ represents the
identity of $G$.  Let $0<\alpha<1$.  Apply the arithmetic construction
of \cref{lem:general-arithmetic} with $\beta=\alpha$.  Thus we obtain a
divisibility sequence
\[
  1=s_0\mid s_1\mid s_2\mid\cdots
\]
and, for $S_\alpha=\{s_j:j\ge0\}$, constants $C_\alpha,c_\alpha>0$ such that
\[
  \tau_{S_\alpha}(m)\le C_\alpha |m|^\alpha+C_\alpha
\]
for every $m\in\Z$, while for the hard values
\[
  H_j=\frac{s_{j+1}}2
\]
one has
\[
  \tau_{S_\alpha}(H_j)\ge c_\alpha H_j^\alpha
\]
for every $j$.  Let
\[
  \Pcal_\alpha=\Pcal_{S_\alpha}
\]
be the presentation from \cref{lem:dummy-generator-coin-comparison}.  The same
lemma gives
\[
  \delta_{\Pcal_\alpha}(n)\le T_{S_\alpha}(n)+1
  \le C'_\alpha n^\alpha+C'_\alpha
\]
for all $n\ge1$.  It also gives, at the lengths $n_j=H_j$,
\[
  \delta_{\Pcal_\alpha}(n_j)
  \ge \Area_{\Pcal_\alpha}(t^{H_j})
  \ge \tau_{S_\alpha}(H_j)
  \ge c_\alpha H_j^\alpha
  =c_\alpha n_j^\alpha.
\]
This proves the upper bound and the lower-bound peaks.

It remains to prove that the fine classes are pairwise distinct.  Let
$0<\alpha<\beta<1$.  Suppose, toward a contradiction, that
\[
  \delta_{\Pcal_\beta}\fpreceq \delta_{\Pcal_\alpha}.
\]
Then there is a constant $K\ge1$ such that
\[
  \delta_{\Pcal_\beta}(n)
  \le K\delta_{\Pcal_\alpha}(Kn+K)+K
\]
for all $n\ge1$.  Along the lower-bound peak sequence $n_j$ for
$\Pcal_\beta$, the upper bound for $\Pcal_\alpha$ gives
\[
  c_\beta n_j^\beta
  \le K(C_\alpha(Kn_j+K)^\alpha+C_\alpha)+K.
\]
The right-hand side is $O(n_j^\alpha)$, while the left-hand side grows like
$n_j^\beta$.  Since $\beta>\alpha$ and $n_j\to\infty$, this is impossible.
Thus $\delta_{\Pcal_\beta}\not\fpreceq \delta_{\Pcal_\alpha}$.  In particular,
no two of the functions arising from different exponents are finely equivalent.
Since the interval $(0,1)$ has the cardinality of the continuum, the theorem
follows.
\end{proof}

\begin{rem}\label{rem:redundant-generator-construction}
The construction in \cref{thm:arbitrary-fg-continuum} is not canonical and does not produce quasi-isometry invariants.  It uses a highly redundant generating set and highly redundant infinite relator sets.  This contrasts with the Dehn spectrum of Osin and Rybak~\cite{OsinRybak}, which allows all null relators up to a length scale and yields quasi-isometry invariants of finitely generated groups.  The point is instead that, for selected infinite presentations, continuum many fine filling-growth classes can occur for every finitely generated group after adding one identity generator.
\end{rem}

\section{Disclosure of AI use}

AI-assisted tools were used during the preparation of preliminary drafts.  The author has checked the arguments, reviewed and edited the text, and takes full responsibility for the content of the paper.

\end{document}